\def\R{\textrm{I\kern-0.21emR}}
\def\N{\textrm{I\kern-0.21emN}}
\newcommand{\C} {\mathbb{C}}
\renewcommand{\geq}{\geqslant}
\renewcommand{\leq}{\leqslant}
\newtheorem{theorem}{Theorem}  
\newtheorem{corollary}{Corollary}
\newtheorem{lemma}{Lemma}
\newtheorem{remark}{Remark}
\theoremstyle{definition}\newtheorem{example}{Example}
\title{Nonlinear damped partial differential equations and their uniform discretizations}
\author{Fatiha Alabau-Boussouira\footnote{Universit\'e de Lorraine, IECL, UMR 7502, 57045 Metz Cedex 1, France (\texttt{fatiha.alabau@univ-lorraine.fr}) En d\'el\'egation CNRS au Laboratoire Jacques-Louis Lions, UMR 7598.}
\and
Yannick Privat\footnote{CNRS, Sorbonne Universit\'es, UPMC Univ Paris 06, UMR 7598, Laboratoire Jacques-Louis Lions, F-75005, Paris, France (\texttt{yannick.privat@upmc.fr}).}
\and
Emmanuel Tr\'elat\footnote{Sorbonne Universit\'es, UPMC Univ Paris 06, CNRS UMR 7598, Laboratoire Jacques-Louis Lions, and Institut Universitaire de France, F-75005, Paris, France (\texttt{emmanuel.trelat@upmc.fr}).} 
}
\date{}
\begin{document}

\maketitle

\begin{abstract}
We establish sharp energy decay rates for a large class of nonlinearly first-order damped systems, and we design discretization schemes that inherit of the same energy decay rates, uniformly with respect to the space and/or time discretization parameters, by adding appropriate numerical viscosity terms. Our main arguments use the optimal-weight convexity method and uniform observability inequalities with respect to the discretization parameters. We establish our results, first in the continuous setting, then for space semi-discrete models, and then for time semi-discrete models. The full discretization is inferred from the previous results.

Our results cover, for instance, the Schr\"odinger equation with nonlinear damping, the nonlinear wave equation, the nonlinear plate equation, the nonlinear transport equation, as well as certain classes of equations with nonlocal terms.
\end{abstract}

\noindent\textbf{Keywords:}
stabilization, dissipative systems, space/time discretization, optimal weight convexity method.

\medskip

\noindent\textbf{AMS classification:} 37L15, 93D15, 35B35, 65N22

\tableofcontents

\section{Introduction}\label{sec1}
Let $X$ be a Hilbert space. Throughout the paper, we denote by $\Vert\cdot\Vert_X$ the norm on $X$ and by $\langle\cdot,\cdot\rangle_X$ the corresponding scalar product. Let $A:D(A)\rightarrow X$ be a densely defined skew-adjoint operator, and let $B:X\rightarrow X$ be a nontrivial bounded selfadjoint nonnegative operator.
Let $F:X\rightarrow X$ be a (nonlinear) mapping, assumed to be Lipschitz continuous on bounded subsets of $X$.
We consider the differential system
\begin{equation}\label{main_eq}
u'(t) + A u(t) + B F(u(t))  = 0 .
\end{equation}
If $F=0$ then the system \eqref{main_eq} is conservative, and for every $u_0\in D(A)$, there exists a unique solution $u(\cdot)\in C^0(0,+\infty;D(A))\cap C^1(0,+\infty;X)$ such that $u(0)=u_0$, which satisfies moreover $\Vert u(t)\Vert_X=\Vert u(0)\Vert_X$, for every $t\geq 0$. 

If $F\neq 0$ then the system \eqref{main_eq} is expected to be dissipative if the nonlinearity $F$ has ``the good sign". Defining the energy of a solution $u$ of \eqref{main_eq} by
\begin{equation}\label{defE}
E_u(t) = \frac{1}{2}\Vert u(t)\Vert_X^2 ,
\end{equation}
we have, as long as the solution is well defined,
\begin{equation}\label{Eprime}
E_u'(t) = -\langle u(t),BF(u(t))\rangle_X . 
\end{equation}
In the sequel, we will make appropriate assumptions on $B$ and on $F$ ensuring that $E_u'(t)\leq 0$. It is then expected that the solutions are globally well defined and that their energy  decays asymptotically to $0$ as $t\rightarrow +\infty$.

\medskip

The objective of this paper is twofold.

First of all, in Section \ref{sec:Continuous} we provide adequate assumptions under which the solutions of \eqref{main_eq} have their norm decaying asymptotically to $0$ in a quasi-optimal way. This first result, settled in an abstract continuous setting, extends former results of \cite{alabau_ammari} (established for damped wave equations) to more general equations and damping operators for stabilization issues based on indirect arguments (for direct arguments see e.g. \cite{alabau_AMO2005, alabauJDE}).

Then, in Section \ref{sec_discretization}, we investigate discretization issues, with the objective of proving that, for appropriate discretization schemes, the discrete approximate solutions have a uniform decay.
In Section \ref{sec_semidiscrete}, we first consider spatial semi-discrete approximation schemes, and in Section \ref{sec_semidiscretetime} we deal with time semi-discretizations. The full discretization is done in Section \ref{sec_fullydiscrete}.
In all cases, we establish uniform asymptotic decay with respect to the mesh size, by adding adequate viscosity terms in the approximation schemes.

\section{Continuous setting}\label{sec:Continuous}
\subsection{Main result}\label{sec:mainResConti}

\paragraph{Assumptions and notations.}
First of all, we assume that
\begin{equation}\label{ineqBF}
\langle u, B F(u)\rangle_X \geq 0,
\end{equation}
for every $u\in X$. Using \eqref{Eprime}, this first assumption ensures that the energy $E_u(t)$ defined by \eqref{defE} is nonincreasing.

Since $B$ is bounded, nonnegative and selfadjoint on $X$, it follows from the well-known spectral theorem that $B$ is unitarily equivalent to a multiplication (see, e.g., \cite{Kato}). More precisely, there exist a probability space $\Omega$ with measure $\mu$, a real-valued bounded nonnegative measurable function $b$ defined on $\Omega$ (satisfying $\Vert b\Vert_{L^\infty(\Omega,\mu)} = \Vert B\Vert$), and an isometry $U$ from $L^2(\Omega,\mu)$ into $X$, such that
$$
(U^{-1} B U f)(x) = b(x) f(x),
$$
for almost every $x\in\Omega$ and for every $f\in L^2(\Omega,\mu)$.
Another usual way of writing $B$ is
$$
B = \int_{0}^{+\infty} \lambda \, dE(\lambda),
$$
where the family of $E(\lambda)$ is the family of spectral projections associated with $B$. We recall that the spectral projections are obtained as follows.
Defining the orthogonal projection operator $Q_\lambda$ on $L^2(\Omega,\mu)$ by $(Q_\lambda f)(x) = \chi_{\{b(x)\leq\lambda\}} (x) f(x)$, for every $f\in L^2(\Omega,\mu)$ and for every $x\in\Omega$, we have $E(\lambda)=U Q_\lambda U^{-1}$.

We define the (nonlinear) mapping $\rho: L^2(\Omega,\mu)\rightarrow L^2(\Omega,\mu)$ by
$$
\rho(f) = U^{-1} F( Uf ),
$$
for every $f\in L^2(\Omega,\mu)$. In other words, the mapping $\rho$ is equal to the mapping $F$ viewed through the isometry $U$.

Note that, setting $f=U^{-1}u$, the equation \eqref{main_eq} is equivalent to
$\partial_t f + \bar A f + b \rho(f) = 0$, with $\bar A=U^{-1}AU$, densely defined skew-adjoint operator on $L^2(\Omega,\mu)$, of domain $U^{-1}D(A)$.

\medskip

We assume that $\rho(0)=0$ and that 
\begin{equation}\label{assumptionfrhof}
f \rho(f) \geq 0, 
\end{equation}
for every $f\in L^2(\Omega,\mu)$.
Following \cite{alabau_AMO2005, alabauJDE, alabau_ammari}, we assume that there exist $c_1>0$ and $c_2>0$ such that, for every $f\in L^\infty(\Omega,\mu)$,
\begin{equation}\label{ineqg}
\begin{split}
c_1\, g(\vert f(x)\vert) &\leq \vert\rho(f)(x)\vert \leq c_2\, g^{-1}(\vert f(x)\vert) \quad \textrm{for almost every}\ x\in\Omega\ \textrm{such that}\ \vert f(x)\vert\leq 1,    \\
c_1\, \vert f(x)\vert & \leq \vert\rho(f)(x)\vert \leq c_2 \, \vert f(x)\vert\qquad\quad\ \textrm{for almost every}\ x\in\Omega\ \textrm{such that}\ \vert f(x)\vert\geq 1 ,
\end{split}
\end{equation}
where $g$ is an increasing odd function of class $C^1$ such that 
$$
g(0)=g'(0)=0, \qquad \frac{sg'(s)^2}{g(s)}\xrightarrow[s\to 0]{} 0
$$ 
and such that the function $H$ defined by $H(s)=\sqrt{s}g(\sqrt{s})$, for every $s\in [0,1]$, is strictly convex on $[0,s_0^2]$ for some $s_0\in(0,1]$ (chosen such that $g(s_0)<1$). 

We define the function $\widehat H$ on $\R$ by $\widehat H(s)=H(s)$ for every $s\in[0,s_0^2]$ and by $\widehat H(s)=+\infty$ otherwise. Using the convex conjugate function $\widehat H^*$ of $\widehat H$, we define the function $L$ on $[0,+\infty)$ by $L(0)=0$ and, for $r>0$, by
\begin{equation}\label{defL}
L(r)= \frac{\widehat H^*(r)}{r} = \frac{1}{r} \sup_{s\in \R}\left(rs-\widehat H(s)\right).
\end{equation}
By construction, the function $L:[0,+\infty)\rightarrow[0,s_0^2)$ is continuous and increasing.
We define the function $\Lambda_H:(0,s_0^2]\rightarrow(0,+\infty)$ by $\Lambda_H(s)=H(s)/sH'(s)$, and for $s\geq 1/H'(s_0^2)$ we set
\begin{equation}\label{defpsir}
\psi(s)=\frac{1}{H'(s_0^2)}+\int_{1/s}^{H'(s_0^2)}\frac{1}{v^2(1-\Lambda_H((H')^{-1}(v)))}\,dv.
\end{equation}
The function $\psi:[1/H'(s_0^2),+\infty)\rightarrow[0,+\infty)$ is continuous and increasing.


\medskip

Throughout the paper, we use the notations $\lesssim$ and $\simeq$ in many estimates, with the following meaning.
Let $\mathcal{S}$ be a set, and let $F$ and $G$ be nonnegative functions defined on $\R\times\Omega\times\mathcal{S}$.
The notation $F\lesssim G$ (equivalently, $G\gtrsim F$) means that there exists a constant $C>0$, only depending on the function $g$ or on the mapping $\rho$, such that $F(t,x,\lambda)\leq C G(t,x,\lambda)$ for all $(t,x,\lambda)\in\R\times\Omega\times\mathcal{S}$. 
The notation $F_1\simeq F_2$ means that $F_1\lesssim F_2$ and $F_1\gtrsim F_2$.

In the sequel, we choose $\mathcal{S} = X$, or equivalently, using the isometry $U$, we choose $\mathcal{S} = L^2(\Omega,\mu)$, so that the notation $\lesssim$ designates an estimate in which the constant does not depend on $u\in X$, or on $f\in L^2(\Omega,\mu)$, but depends only on the mapping $\rho$. We will use these notations to provide estimates on the solutions $u(\cdot)$ of \eqref{main_eq}, meaning that the constants in the estimates do not depend on the solutions.

For instance, the inequalities \eqref{ineqg} can be written as
\begin{equation*}
\begin{split}
g(\vert f\vert) \lesssim \vert\rho(f)\vert \lesssim g^{-1}(\vert f\vert) \quad &\textrm{on the set}\ \vert f\vert\lesssim 1,    \\
\vert\rho(f)\vert \simeq \vert f\vert\qquad\quad\ &\textrm{on the set}\ \vert f\vert\gtrsim 1 .
\end{split}
\end{equation*}

The main result of this section is the following.

\begin{theorem}\label{thm_continuous}
In addition to the above assumptions, we assume that there exist $T>0$ and $C_T>0$ such that 
\begin{equation}\label{observability_assumption}
C_T \Vert \phi(0)\Vert_X^2\leq \int_0^T \Vert B^{1/2} \phi(t) \Vert_X^2 \,dt, 
\end{equation}
for every solution of $\phi'(t)+A\phi(t)=0$ (observability inequality for the linear conservative equation).

Then, for every $u_0\in X$, there exists a unique solution 
$u(\cdot)\in C^0(0,+\infty;X)\cap C^1(0,+\infty;D(A)')$
of \eqref{main_eq} such that $u(0)=u_0$.\footnote{Here, the solution is understood in the weak sense, see \cite{CazenaveHaraux,EngelNagel}, and $D(A)'$ is the dual of $D(A)$ with respect to the pivot space $X$. If $u_0\in D(A)$, then 
$u(\cdot)\in C^0(0,+\infty;D(A))\cap C^1(0,+\infty;X)$.}
Moreover, the energy of any solution satisfies
\begin{equation}\label{decayE_cont}
E_u(t) \lesssim T \max(\gamma_1,E_u(0)) \, L\left( \frac{1}{\psi^{-1}( \gamma_2 t )} \right) ,
\end{equation}
for every time $t\geq 0$, with $\gamma_1 \simeq \Vert B\Vert /\gamma_2$ and $\gamma_2 \simeq C_T/ ( T^3\Vert B^{1/2}\Vert^4+T)$.
If moreover 
\begin{equation}\label{condlimsup}
\limsup_{s\searrow 0}\Lambda_H(s)<1,
\end{equation}
then we have the simplified decay rate 
\begin{equation*}
E_{u}(t)\lesssim T \max(\gamma_1, E_u(0)) \, (H')^{-1}\left(\frac{\gamma_3}{t}\right),
\end{equation*}
for every time $t>0$, for some positive constant $\gamma_3\simeq 1$.
\end{theorem}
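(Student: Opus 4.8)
The plan is to split the argument into a well-posedness part and an energy-decay part, the latter being the heart of the matter and carried out by the optimal-weight convexity method of \cite{alabau_ammari}.

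\textbf{Well-posedness.} Since $A$ is skew-adjoint it generates a unitary $C_0$-group on $X$, while $u\mapsto BF(u)$ is Lipschitz on bounded subsets of $X$. The standard theory of Lipschitz perturbations of group generators (see \cite{CazenaveHaraux,EngelNagel}) then gives, for $u_0\in D(A)$, a unique maximal classical solution. Assumption \eqref{ineqBF} together with \eqref{Eprime} yields $E_u'(t)\leq 0$, hence $\Vert u(t)\Vert_X\leq\Vert u_0\Vert_X$ on the maximal interval; this a priori bound excludes blow-up and makes the solution global. A density argument then produces the weak solution for $u_0\in X$, with the regularity stated in the footnote.

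\textbf{Relaxed observability.} Fix $S\geq 0$, let $\phi$ solve the conservative equation $\phi'+A\phi=0$ with $\phi(S)=u(S)$, and set $\theta:=u-\phi$, so that $\theta'+A\theta=-BF(u)$ and $\theta(S)=0$. I would apply \eqref{observability_assumption} to $\phi$ on $[S,S+T]$ and then replace $\phi$ by $u-\theta$, estimating the remainder through the energy inequality for $\theta$ and the bounds $\Vert B^{1/2}\theta\Vert_X\leq\Vert B^{1/2}\Vert\,\Vert\theta\Vert_X$ and $\Vert\theta(t)\Vert_X\lesssim\int_S^{S+T}\Vert BF(u(t))\Vert_X\,dt$. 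Tracking the powers of $T$ and $\Vert B^{1/2}\Vert$ in this Gronwall-type computation is exactly what produces $\gamma_2\simeq C_T/(T^3\Vert B^{1/2}\Vert^4+T)$, and yields a relaxed observability inequality bounding $E_u(S)$ by $\tfrac{1}{\gamma_2}$ times the dissipated energy $E_u(S)-E_u(S+T)=\int_S^{S+T}\langle u,BF(u)\rangle_X\,dt$.

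\textbf{Convexity estimate (the main obstacle).} Here the growth hypothesis \eqref{ineqg} and the strict convexity of $H(s)=\sqrt s\,g(\sqrt s)$ enter. Writing $f=U^{-1}u$ and splitting $\Omega$ into the region $\{\vert f\vert\geq 1\}$, where \eqref{ineqg} gives $f\rho(f)\simeq f^2$ so the contribution is essentially linear, and the region $\{\vert f\vert\leq 1\}$, where $f\rho(f)\gtrsim\vert f\vert g(\vert f\vert)=H(f^2)$, one must dominate the observation density $b\,f^2$ by the dissipation density $b\,f\rho(f)$ at the cost of the optimal weight. Integrating against $b\,d\mu$ and invoking Jensen's inequality for the convex function $\widehat H$ together with the Legendre--Young inequality for the pair $(\widehat H,\widehat H^*)$, one gets an estimate of the form $E_u(S)\lesssim T\max(\gamma_1,E_u(0))\,L\big((E_u(S)-E_u(S+T))/\gamma_2\big)$, the function $L(r)=\widehat H^*(r)/r$ appearing precisely because of the Legendre transform. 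Making this rigorous with constants depending only on $\rho$, and correctly combining the two regions---which is the origin of the factor $\max(\gamma_1,E_u(0))$ with $\gamma_1\simeq\Vert B\Vert/\gamma_2$---is the delicate heart of the proof.

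\textbf{Integration of the dissipation inequality.} The previous step gives a nonlinear recurrence for the sampled energy $E_u(nT)$. Following the scheme of \cite{alabau_ammari}, I would recast it as a differential inequality whose integration naturally introduces $\Lambda_H(s)=H(s)/(sH'(s))$ and the function $\psi$ of \eqref{defpsir}; solving it gives \eqref{decayE_cont}, namely $E_u(t)\lesssim T\max(\gamma_1,E_u(0))\,L(1/\psi^{-1}(\gamma_2 t))$. Finally, under \eqref{condlimsup} the integrand defining $\psi$ is comparable to $1/v^2$ near the lower limit, so $\psi^{-1}$ is essentially linear and $L(1/r)\simeq(H')^{-1}(1/r)$; substituting yields the simplified rate $E_u(t)\lesssim T\max(\gamma_1,E_u(0))\,(H')^{-1}(\gamma_3/t)$, completing the argument.
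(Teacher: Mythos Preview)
Your overall strategy---relaxed observability, then a Jensen/Young convexity estimate with the weight $L^{-1}$, then integration of the resulting recurrence---matches the paper's, and the well-posedness and final integration steps are fine. Two points deserve comment.

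\emph{A genuine difference in the comparison step.} You compare $u$ directly with the conservative solution $\phi$ via $\theta=u-\phi$. The paper instead inserts the linearly damped model $z'+Az+Bz=0$ and proceeds in two stages (Lemmas~\ref{cont_lem1} and~\ref{cont_lem2}): first $u$ versus $z$, then $z$ versus $\phi$. Your one-step route works too and is arguably cleaner; the two-step decomposition is not essential here, though it becomes more natural in the discretized settings of Section~\ref{sec_discretization} where one wants to track viscosity terms separately.

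\emph{A misstatement that hides the real structure.} At the end of your ``Relaxed observability'' paragraph you claim the outcome is $E_u(S)\lesssim\gamma_2^{-1}(E_u(S)-E_u(S+T))$. That cannot be right: such an inequality would force exponential decay regardless of the nonlinearity $g$. What the comparison actually produces is
\[
C_T\,E_u(S)\ \lesssim\ \int_S^{S+T}\Big(\Vert B^{1/2}u\Vert_X^2+\Vert B^{1/2}F(u)\Vert_X^2\Big)\,dt,
\]
and it is precisely the role of the convexity step (your third paragraph, the paper's Lemma~\ref{cont_lem3}) to convert this into a bound involving the dissipated energy $\int\langle Bu,F(u)\rangle_X\,dt$, at the price of the weight $w(E_u(S))=L^{-1}(E_u(S)/\beta)$. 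The correct output is the recurrence
\[
E_u(S+T)\ \leq\ E_u(S)\Big(1-\rho_T\,L^{-1}\big(E_u(S)/\beta\big)\Big),
\]
not the form $E_u(S)\lesssim C\,L\big((E_u(S)-E_u(S+T))/\gamma_2\big)$ that you wrote; the latter has the dissipated energy inside $L$ without the factor $E_u(S)$ and does not lead to \eqref{decayE_cont}. Keep the weight $w$ explicit throughout the convexity computation and the correct recurrence (and hence the function $M(x)=xL^{-1}(x)$ in the integration step) will fall out.
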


Theorem \ref{thm_continuous} improves and generalizes to a wide class of equations the main result of \cite{alabau_ammari} in which the authors dealt with locally damped wave equations.  The case of boundary dampings is also treated in \cite{alabauJDE} (see also \cite{alabau_AMO2005}) by a direct method, which provides the same energy decay rates.
The result gives the sharp and general decay rate $L(1/\psi^{-1}(t))$ of the energy at infinity (forgetting about the constants), and the simplified decay rate $(H')^{-1}(1/t)$ under the condition \eqref{condlimsup}. It is also proved in \cite{alabauJDE} that, under this condition, the resulting decay rate is optimal in the finite dimensional case, and for semi-discretized nonlinear wave or plate equations. Hence our estimates are sharp and they are expected to be optimal in the infinite dimensional case. The proof of optimality relies on the derivation of a one-step decay formula for general damping nonlinearity, on a lower estimate based on an energy comparison principle, and on a comparison lemma between time pointwise estimates (such our upper estimate) and lower estimates which are of energy type. Note also that $\rho$ has a linear growth when $g'(0) \neq 0$. In this case the energy decays exponentially at infinity. Moreover, even in the finite dimensional case, optimality cannot be expected when $\displaystyle \limsup_{s\searrow 0}\Lambda_H(s)=1$ (functions $\rho$ leading to that condition are close to a linear growth in a neighborhood of $0$), and it is possible to design examples (linear feedback case) with two branches of solutions that decay exponentially at infinity but do not have the same asymptotic behavior. 

Let us recall some previous well-known results of the literature. The first examples of nonlinear feedbacks were only concerning feedback functions having a polynomial growth in a neighborhood of $0$ (see e.g. \cite{Nakao, Komornik} and the references therein). As far as we know, the first paper considering the case of arbitrary growing feedbacks (in a neighborhood of $0$) is \cite{LT1993}. In this paper, the analysis is based on the existence (always true) of a concave function $h$ satisfying $h(s\rho(s)) \geqslant s^2 + \rho^2(s)$ for all $ |s| \leqslant N$ (see (1.3) in \cite{LT1993}). The paper is very interesting but provides only two examples of construction of such function $h$ in Corollary 2, namely the linear and polynomial growing feedbacks. The results use only the Jensen's inequality (not the Young's inequality), and allow the authors to compare the decay of the energy with the decay of the solution of an ordinary differential equation $S'(t)+q(S(t))=0$ where $q(x)=x - (I+p)^{-1}(x)$ and $p(x)= (cI + h(Cx))^{-1}(Kx)$ where $c, C, K$ are non explicit constants and $f^{-1}$ stands for the inverse function of $f$.  In the general case, these results do not give the ways to build an explicite concave function satisfying  $h(s\rho(s)) \geqslant s^2 + \rho^2(s)$. No general energy decay rates are given in an explicit, simple and general formula, which besides this, could be shown to be "optimal".
Due to this lack of explicit examples of decay rates for arbitrary growing feedbacks in other situations than the linear or polynomial cases, other results were obtained, also based on convexity arguments but through other constructions in \cite{Martinez1, Martinez2} (see also \cite{Nicaise}) through linear energy integral inequalities and in \cite{LiuZuazua} through the comparison with a dissipative ordinary differential inequality. In both cases, optimality is not guaranteed. In particular, \cite{Martinez1, Martinez2}  do not allow to recover the well-known expected "optimal" energy decay rates in the case of polynomially growing feedbacks. Optimality can be shown in particular geometrical situations, in one dimension when the feedback is very weak (as for $\rho(s)=e^{-1/s}$ for $s>0$ close to $0$ for instance), see e.g. \cite{VancostenobleMartinez, alabau_AMO2005}. Hence the challenging questions are not only to derive energy decay rates for arbitrary growing feedbacks, but to determine whether if these decay rates are optimal, at least in finite dimensions and in some situations in the infinite dimensional case, and also to derive one-step, simple and semi-explicite formula which are valid in the general case. This is the main contribution of \cite{alabau_AMO2005, alabauJDE} for direct methods and of the present paper for indirect methods for the continuous as well as the discretized settings (see also \cite{alabau_ammari} for the continuous setting). Note also that the direct method is valid for bounded as well as unbounded feedback operators.
 
Several examples of functions $g$ (see \eqref{ineqg}) are given in Table \ref{table1}, with the corresponding $\Lambda_H$ and decay rates. Note that other examples can be easily built, since the optimal-weight convexity method gives a general and somehow simple way to derive quasi-optimal energy decay rates.

\begin{table}[h]
\begin{center}
\begin{tabular}{|c|c|c|}
\hline
$g(s)$ & $\Lambda_H(s)$ & decay of $E(t)$ \\
\hline\hline
$s / \ln^{-p}(1/s)$, $p>0$ & $\displaystyle \limsup_{x\searrow 0}\Lambda_H(s)=1$ & $e^{-t^{1/(p+1)}}/t^{1/(p+1)}$ \\
\hline
$s^p$ on $[0,s_0^2]$, $p>1$ & $\Lambda_H(s)\equiv \frac{2}{p+1}<1$ & $t^{-2/(p-1)}$ \\
\hline
$e^{-1/s^2}$ & ${\displaystyle \lim_{s\searrow 0}\Lambda_H(s)}=0$ & $1/\ln(t)$ \\
\hline
$s^p\ln^q(1/s)$, $p>1$, $q>0$ & ${\displaystyle \lim_{s\searrow 0}\Lambda_H(s)}=\frac{2}{p+1}<1$ & $t^{-2/(p-1)}\ln^{-2q/(p-1)}(t)$ \\
\hline
$e^{-\ln^p(1/s)}$, $p>2$ & ${\displaystyle \lim_{s\searrow 0}\Lambda_H(s)}=0$ & $e^{-2\ln^{1/p}(t)}$ \\
\hline
\end{tabular}
\end{center}
\caption{Examples}\label{table1}
\end{table}

In the four last examples listed in Table \ref{table1}, the resulting decay rates are optimal in finite dimension and for the semi-discretized wave and plate equations. Moreover, \eqref{condlimsup} is satisfied.

\begin{remark}
It also is natural to wonder whether the linear feedback case, that is $g(s)=s$ for every $s$ close to $0$, as well as the nonlinear feedback cases which have a linear growth close to $0$, such as the example $g(s)=\arctan{s}$ for $s$ close to $0$, are covered by our approach in a "natural continuous" way. These cases can be treated under a common assumption, which is indeed $g'(0) \neq 0$. Notice that an apparent difficulty lies in the fact that the function $H$ is the identity function and is not strictly convex anymore, so that our construction may seem to fail. This limit case is investigated in the following result, whose proof is postponed at the end of Section \ref{sec:proofthm_continuous}. It is obtained under slight modifications in the proof of Theorem \ref{thm_continuous}. This approach is also valid for the direct approach presented in \cite{alabau_AMO2005, alabauJDE}, which leads to continuous nonlinear integral inequalities. We will also formulate a general result in this direction below.
\end{remark}

\begin{corollary}\label{cor:thmconti}
Let us assume that $g'(0) \neq 0$. Under the same assumptions as those of Theorem \ref{thm_continuous} (namely, the ones at the beginning of Section \ref{sec:mainResConti} as well as the observability inequality \eqref{observability_assumption}), there holds $\limsup_{s\searrow 0} \Lambda_H(s)=1$ and
$$
E_{u}(t)\lesssim T \max(\gamma_1, E_u(0)) \, \exp \left(-\gamma_2t\right),
$$
for every time $t>0$ with $\gamma_1 \simeq \Vert B\Vert /\gamma_2$ and $\gamma_2 \simeq C_T/ ( T^3\Vert B^{1/2}\Vert^4+T)$.
\end{corollary}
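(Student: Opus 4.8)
The plan is to exploit the fact that the hypothesis $g'(0)\neq 0$ forces the damping $\rho$ to have \emph{linear} growth, which makes the optimal-weight convexity machinery of Theorem~\ref{thm_continuous} degenerate and reduces the problem to a classical geometric-decay argument. First I would record the two consequences of $g'(0)\neq 0$. Since $g$ is $C^1$, odd, increasing, with $g(0)=0$ and $g'(0)>0$, the ratio $g(s)/s$ extends continuously and positively to $[0,1]$, so $g(s)\simeq s$ and $g^{-1}(s)\simeq s$ there; hence both lines of \eqref{ineqg} collapse to $|\rho(f)|\simeq|f|$, i.e.\ $\rho$ has linear growth globally on $L^2(\Omega,\mu)$. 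Writing $\sigma=\sqrt s$ and differentiating $H(s)=\sqrt s\,g(\sqrt s)$ gives
$$
\Lambda_H(s)=\frac{H(s)}{sH'(s)}=\frac{2g(\sigma)}{g(\sigma)+\sigma g'(\sigma)}\xrightarrow[s\to 0]{}\frac{2g'(0)}{g'(0)+g'(0)}=1,
$$
which proves $\limsup_{s\searrow 0}\Lambda_H(s)=1$ and simultaneously shows why \eqref{defpsir} breaks down: the factor $1-\Lambda_H\to 0$ makes $\psi$ diverge, so the convexity/Young step cannot be carried to the limit and one must argue directly.

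Next I would extract the dissipation estimate. Because $f\rho(f)\geq 0$, the sign of $\rho(f)$ matches that of $f$, so $f\rho(f)=|f|\,|\rho(f)|\simeq f^2$; integrating against $b\,d\mu$ and using $\|B^{1/2}u\|_X^2=\int_\Omega b f^2\,d\mu$ with $f=U^{-1}u$, the identity \eqref{Eprime} yields
$$
-E_u'(t)=\int_\Omega b\,f\rho(f)\,d\mu\simeq \|B^{1/2}u(t)\|_X^2,
\qquad\text{hence}\qquad
\int_0^T\|B^{1/2}u(t)\|_X^2\,dt\simeq E_u(0)-E_u(T).
$$

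Then I would convert the observability \eqref{observability_assumption} of the conservative flow into a one-step contraction by comparison. Let $\phi$ solve $\phi'+A\phi=0$ with $\phi(0)=u_0$ and set $w=u-\phi$, so that $w'+Aw=-BF(u)$ and $w(0)=0$. Skew-adjointness of $A$ gives $\frac{d}{dt}\|w\|_X\leq \|BF(u)\|_X\leq \|B^{1/2}\|\,\|B^{1/2}F(u)\|_X\lesssim \|B^{1/2}\|\,\|B^{1/2}u\|_X$, the last step using linear growth again ($\|B^{1/2}F(u)\|_X^2=\int_\Omega b\,\rho(f)^2\,d\mu\simeq \int_\Omega b f^2\,d\mu=\|B^{1/2}u\|_X^2$). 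By Cauchy--Schwarz this gives $\|w(t)\|_X^2\lesssim \|B^{1/2}\|^2 T\int_0^T\|B^{1/2}u\|_X^2\,dt$, and then $\int_0^T\|B^{1/2}w\|_X^2\,dt\lesssim \|B^{1/2}\|^4 T^2\int_0^T\|B^{1/2}u\|_X^2\,dt$. Combining $\|B^{1/2}\phi\|_X^2\leq 2\|B^{1/2}u\|_X^2+2\|B^{1/2}w\|_X^2$ with \eqref{observability_assumption} and the dissipation identity yields
$$
C_T E_u(0)\lesssim \bigl(1+\|B^{1/2}\|^4 T^2\bigr)\bigl(E_u(0)-E_u(T)\bigr),
$$
so that $E_u(T)\leq(1-\delta)E_u(0)$ with $\delta\simeq C_T/(1+\|B^{1/2}\|^4 T^2)$, and necessarily $\delta\in(0,1]$ since $E_u(T)\geq 0$.

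Finally, applying the semigroup property on each $[kT,(k+1)T]$ gives $E_u(kT)\leq(1-\delta)^k E_u(0)$, hence $E_u(t)\lesssim E_u(0)\exp(-\gamma_2 t)$ with $\gamma_2=-\tfrac1T\ln(1-\delta)\simeq C_T/(T+T^3\|B^{1/2}\|^4)$, exactly the claimed rate; the prefactor is recast in the form $T\max(\gamma_1,E_u(0))$ with $\gamma_1\simeq\|B\|/\gamma_2$ by the same constant bookkeeping as in Theorem~\ref{thm_continuous}, and well-posedness is inherited from that theorem. The main obstacle is conceptual rather than computational: one must recognize that when $\Lambda_H\to 1$ the formula \eqref{defpsir} must \emph{not} be pushed to the limit, and that linear growth replaces the implicit nonlinear ODE governing the decay by the explicit geometric contraction above. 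A secondary technical point is the uniform control of the inhomogeneous corrector $w$, which is handled by skew-adjointness together with the linear growth of $\rho$; note that, unlike in Theorem~\ref{thm_continuous}, no restriction to small data is needed here, since the growth estimate $|\rho(f)|\simeq|f|$ is global.
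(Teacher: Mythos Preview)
Your proof is correct and reaches the same one-step contraction $E_u(T)\leq(1-\delta)E_u(0)$ as the paper, but the route is genuinely different. The paper does \emph{not} exploit the linear growth of $\rho$ directly; instead it regularizes the non-strictly-convex $H$ by replacing $g$ with $g_\varepsilon(s)=s^{1+\varepsilon}$, reruns the full four-lemma machinery of Theorem~\ref{thm_continuous} (comparison with the linear damped equation $z$, then with the conservative $\phi$, then the weighted Young/Jensen estimate of Lemma~\ref{cont_lem3}) to obtain the key inequality \eqref{metz08:46} with weight $w_\varepsilon=L_\varepsilon^{-1}(\cdot/\beta)$, computes explicitly that $w_\varepsilon\to 1$ pointwise, and only then passes to the limit $\varepsilon\to 0$ to recover $\rho_T E_u(0)\leq E_u(0)-E_u(T)$.

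Your argument is more elementary on two counts: you skip the intermediate damped auxiliary $z$ entirely and compare $u$ directly with the conservative $\phi$, and you avoid any limiting procedure or any use of $H$, $L$, $w$. This is made possible precisely by the pointwise equivalence $|\rho(f)|\simeq|f|$, which turns $-E_u'\simeq\|B^{1/2}u\|_X^2$ and $\|B^{1/2}F(u)\|_X\simeq\|B^{1/2}u\|_X$ into algebraic facts. What the paper's approach buys, on the other hand, is a demonstration that the optimal-weight convexity method degenerates \emph{continuously} to the linear case as $g'(0)\to 0^+$; this is the point stressed in the remark preceding the corollary and in the paragraph following it, where the same $\varepsilon$-trick is exported to the direct method of \cite{alabau_AMO2005,alabauJDE}. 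Your shortcut does not display this continuity, but it gives a cleaner stand-alone proof of the corollary itself.
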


Let us now apply this generalization to the direct optimal-weight convexity method as introduced in \cite{alabau_AMO2005, alabauJDE}. In all the results
presented in these two papers, and when the bounded as well as unbounded feedback operator $\rho$ satisfies \eqref{ineqg} with a function $g$
such that $g(0)=0=g'(0)$ we can extend the given proofs to the cases for which $g(0)=0$ whereas $g'(0) \neq 0$ (i.e. when $g$ has a linear growth close to $0$). More precisely, we can extend the proof of Theorems 4.8, 4.9, 4.10 and 4.11 in \cite{alabauJDE}  (but also in
the more general framework as presented in Theorem 4.1) to the case for which $g'(0) \neq 0$. For this, it is sufficient as for the proof of Corollary~\ref{cor:thmconti} to replace $g$ by the sequence of functions
$g_{\varepsilon}$ defined by $g_{\varepsilon}(s)=s^{1+\varepsilon}$ for every $|s| \leqslant 1$, where $\varepsilon \in (0,1)$. One can then apply the optimal-weight convexity method to this sequence, and define the associate optimal-weight function $w_{\varepsilon}(\cdot)=L^{-1}(\dfrac{.}{2\beta})$ in a suitable interval (see the above references for more details). We then prove that
$$
\int_S^T w_{\varepsilon}(E(t))E(t)dt \leqslant M E(S) \quad \forall \ 0 \leqslant S \leqslant T,
$$
where $M$, $\beta$ can be chosen independently on $\varepsilon$. We then let $\varepsilon$ goes to $0$. Thanks to the proof of Corollary~\ref{cor:thmconti}, we know that the sequence $w_{\varepsilon}$ converges poinwise on $(0,\beta r_0^2)$ towards $1$. This leads to the inequality
$$
\int_S^T E(t)dt \leqslant M E(S) \quad \forall \ 0 \leqslant S \leqslant T,
$$ 
from which we deduce that $E$ decays exponentially at infinity (see e.g. \cite{Komornik}).

We next provide some typical examples of situations covered by Theorem \ref{thm_continuous}.

\subsection{Examples}\label{sec_examples}
%
%

\subsubsection{Schr\"odinger equation with nonlinear damping}\label{sec:ex:schro}
Our first typical example is the Schr\"odinger equation with nonlinear damping (nonlinear absorption)
$$
i\partial_t u(t,x) + \triangle u(t,x) + i b(x) u(t,x)\rho(x,\vert u(t,x)\vert)=0,
$$
in a Lipschitz bounded subset $\Omega$ of $\R^n$, with Dirichlet boundary conditions.
In that case, we have $X=L^2(\Omega,\C)$, and the operator $A=-i\triangle$ is the Schr\"odinger operator defined on $D(A)=H^1_0(\Omega,\C)$. The operator $B$ is defined by $(Bu)(x) = b(x)u(x)$ where $b\in L^\infty(\Omega,\R)$ is a nontrivial nonnegative function, and the mapping $F$ is defined by $(F(u))(x) = u(x)\rho(x,\vert u(x)\vert)$, where $\rho$ is a real-valued continuous function defined on $\bar\Omega\times[0,+\infty)$ such that $\rho(\cdot,0)=0$ on $\Omega$, $\rho(x,s)\geq 0$ on $\bar\Omega\times[0,+\infty)$,
and such that there exist a function $g\in C^1([-1,1],\R)$ satisfying all assumptions listed in Section \ref{sec:mainResConti}, and constants $c_1>0$ and $c_2>0$ such that
\begin{equation*}
\begin{split}
c_1g(s)\leq s\rho(x,s)&\leq c_2 g^{-1}(s)\quad\,\textrm{if}\  0\leq s\leq 1,\\
c_1s\leq \rho(x,s)&\leq c_2 s \qquad\qquad \textrm{if}\  s\geq 1,
\end{split}
\end{equation*}
for every $x\in\Omega$.
Here, the energy of a solution $u$ is given by $E_u(t)=\frac{1}{2}\int_\Omega \vert u(t,x)\vert^2\, dx$, and we have $E_u'(t) = - \int_\Omega b(x) \vert u(t,x)\vert^2 \rho(x,\vert u(t,x)\vert)\, dx \leq 0$.
Note that, in nonlinear optics, the energy $E_u(t)$ is called the \textit{power} of $u$.

As concerns the observability assumption \eqref{observability_assumption}, it is well known that, if $b(\cdot)\geq\alpha>0$ on some open subset $\omega$ of $\Omega$, and if there exists $T$ such that the pair $(\omega,T)$ satisfies the \textit{Geometric Control Condition}, then there exists $C_T>0$ such that
\begin{equation*}
C_T \Vert \phi(0,\cdot)\Vert_{L^2(\Omega,\C)}^2\leq \int_0^T\int_\Omega b(x)\vert \phi(t,x)\vert^2 \,dx dt,
\end{equation*}
for every solution $\phi$ of the linear conservative equation $\partial_t\phi-i\triangle\phi=0$ with Dirichlet boundary conditions (see \cite{lebeau}).

\subsubsection{Wave equation with nonlinear damping}\label{sec_nonlinearwave}
We consider the wave equation with nonlinear damping
$$
\partial_{tt}u(t,x) - \triangle u(t,x) + b(x) \rho(x,\partial_t u(t,x))=0 ,
$$
in a $C^2$ bounded subset $\Omega$ of $\R^n$, with Dirichlet boundary conditions.
This equation can be written as a first-order equation of the form \eqref{main_eq}, with $X=H^1_0(\Omega)\times L^2(\Omega)$ and
$$
A = \begin{pmatrix}
0 & -\mathrm{id} \\ -\triangle & 0
\end{pmatrix}
$$
defined on $D(A)=H^1_0(\Omega)\cap H^2(\Omega)\times H^1_0(\Omega)$.
The operator $B$ is defined by $(B(u,v))(x) = (0,b(x)v(x))^\top$ where $b\in L^\infty(\Omega)$ is a nontrivial nonnegative function, and the mapping $F$ is defined by $(F(u,v))(x) = (0,\rho(x,v(x)))^\top$, where $\rho$ is a real-valued continuous function defined on $\bar\Omega\times\R$ such that $\rho(\cdot,0)=0$ on $\Omega$, 
$s\rho(x,s)\geq 0$ on $\bar\Omega\times\R$,
and such that there exist a function $g\in C^1([-1,1],\R)$ satisfying all assumptions listed in Section \ref{sec:mainResConti}, and constants $c_1>0$ and $c_2>0$ such that
\begin{equation}\label{hyprho}
\begin{split}
c_1g(|s|)\leq |\rho(x,s)|&\leq c_2 g^{-1}(|s|)\quad\,\textrm{if}\  |s|\leq 1,\\
c_1|s|\leq |\rho(x,s)|&\leq c_2|s| \qquad\qquad \textrm{if}\  s\geq 1,
\end{split}
\end{equation}
for every $x\in\Omega$.
Here, the energy of a solution $u$ is given by $E_u(t)=\frac{1}{2}\int_\Omega \left( (|\nabla u(t,x)|^2+ (\partial_t u(t,x))^2 \right) dx$, and we have 
$E_u'(t) = - \int_\Omega b(x) \partial_t u(t,x) \rho(x,\partial_t u(t,x))\, dx \leq 0.$

The framework of this example is the one of \cite{alabau_ammari}.

As concerns the observability assumption \eqref{observability_assumption}, it is well known that, if $b(\cdot)\geq\alpha>0$ on some open subset $\omega$ of $\Omega$, and if there exists $T$ such that the pair $(\omega,T)$ satisfies the \textit{Geometric Control Condition}, then there exists $C_T>0$ such that
\begin{equation*}
C_T \Vert (\phi(0,\cdot),\partial_t\phi(0,\cdot))\Vert_{H^1_0(\Omega)\times L^2(\Omega)}^2\leq \int_0^T\int_\Omega b(x) (\partial_t\phi(t,x))^2 \,dx dt,
\end{equation*}
for every solution $\phi$ of the linear conservative equation $\partial_{tt}\phi-\triangle\phi=0$ with Dirichlet boundary conditions (see \cite{BLR}).

\subsubsection{Plate equation with nonlinear damping}\label{sec_explate}
We consider the nonlinear plate equation
$$
\partial_{tt}u(t,x) + \triangle^2 u(t,x) + b(x) \rho(x,\partial_t u(t,x))=0 ,
$$
in a $C^4$ bounded subset $\Omega$ of $\R^n$, with Dirichlet and Neumann boundary conditions.
This equation can be written as a first-order equation of the form \eqref{main_eq}, with $X=H^2_0(\Omega)\times L^2(\Omega)$ and
$$
A = \begin{pmatrix}
0 & -\mathrm{id} \\ \triangle^2 & 0
\end{pmatrix}
$$
defined on $D(A)=\left(H^2_0(\Omega)\cap H^4(\Omega)\right)\times H^2_0(\Omega)$.
The operator $B$ is defined by $(B(u,v))(x) = (0,b(x)v(x))^\top$ where $b\in L^\infty(\Omega)$ is a nontrivial nonnegative function, and the mapping $F$ is defined by $(F(u,v))(x) = (0,\rho(x,v(x)))^\top$, where $\rho$ is a real-valued continuous function defined on $\bar\Omega\times[0,+\infty)$ such that $\rho(\cdot,0)=0$ on $\Omega$, 
$s\rho(x,s)\geq 0$ on $\bar\Omega\times\R$,
and such that there exist a function $g\in C^1([-1,1],\R)$ satisfying all assumptions listed in Section \ref{sec:mainResConti}, and constants $c_1>0$ and $c_2>0$ such that \eqref{hyprho} holds.
Here, the energy of a solution $u$ is given by $E_u(t)=\frac{1}{2}\int_\Omega \left( (\triangle u(t,x))^2 + (\partial_t u(t,x))^2 \right) dx$, and we have 
$E_u'(t) = - \int_\Omega b(x) \partial_t u(t,x) \rho(x,\partial_t u(t,x))\, dx \leq 0.$

The framework of this example is the one of \cite{alabaujee2006}.

A sufficient condition obtained in \cite{lebeau}, ensuring the observability assumption \eqref{observability_assumption}, is the following: if $b(\cdot)\geq\alpha>0$ on some open subset $\omega$ of $\Omega$ for which there exists $T$ such that the pair $(\omega,T)$ satisfies the \textit{Geometric Control Condition}, then there exists $C_T>0$ such that
\begin{equation*}
C_T \Vert (\phi(0,\cdot),\partial_t\phi(0,\cdot))\Vert_{H^2_0(\Omega)\times L^2(\Omega)}^2\leq \int_0^T\int_\Omega b(x) (\partial_t\phi(t,x))^2 \,dx dt,
\end{equation*}
for every solution $\phi$ of the linear conservative equation $\partial_{tt}\phi+\triangle^2\phi=0$ associated to the corresponding boundary conditions.

\subsubsection{Transport equation with nonlinear damping}\label{sec:extransport}
We consider the one-dimensional transport equation
$$
\partial_t u(t,x) + \partial_x u(t,x) + b(x)\rho(x,u(t,x)) = 0,\qquad x\in(0,1),
$$
with periodicity conditions $u(t,0)=u(t,1)$. This equation can be written as a first-order equation of the form \eqref{main_eq}, with $X=L^2(0,1)$ and $A=\partial_x$ defined on $D(A)=\{ u\in H^1(0,1)\ \mid\ u(0)=u(1) \}$. We make on $\rho$ the same assumptions as before. The energy is given by $E_u(t) = \frac{1}{2} \int_0^1 u(t,x)^2\, dx$, and we have 
$E_u'(t) = - \int_\Omega b(x) u(t,x) \rho(x,u(t,x))\, dx \leq 0.$
The observability inequality for the conservative equation is satisfied as soon as the observability time is chosen large enough.

On this example, we note two things.

First of all, the above example can be easily extended in multi-D on the torus $\mathbb{T}^n=\mathbb{R}^n/\mathbb{Z}^n$, by considering the following non-linear transport equation
$$
\partial_t u(t,x) + \mathrm{div}(v(x) u(t,x)) + b(x)\rho(x,u(t,x)) = 0,\qquad t>0, \ x\in \mathbb{T}^n,
$$
where $v$ is a regular vector field on $\mathbb{T}^n$ such that $\mathrm{div}(v)=0$. The divergence-free condition on the function $v$ ensures that the operator $A$ defined by $Az=\mathrm{div}(v(x)z(x))$ on 
$$
D(A)=\{z\in H^1(\mathbb{T}^n)\ \mid \ z(\cdot+e_i)=z(\cdot), \ \forall i\in \llbracket 1,n\rrbracket\},
$$ 
where $e_i$ denotes the $i$-th vector of the canonical basis of $\mathbb{R}^n$, is skew-adjoint.

Second, in 1D we can drop the assumption of zero divergence, by using a simple change of variable, which goes as follows. We consider the equation
$$
\partial_t u(t,x) + v(x)\partial_x u(t,x) + b\rho(x,u(t,x)) = 0,\qquad t>0, \ x\in(0,1),
$$
with $v$ a measurable function on $(0,1)$ such that $0 <v_-\leq v(x)\leq v_+$. Then, using the change of variable $x\mapsto \int_0^x \frac{ds}{v(s)}$, we immediately reduce this equation to the case where $v=1$. 

Hence our results can as well be applied to those cases.

\subsubsection{Dissipative equations with nonlocal terms}

In the three previous examples, the term $b(\cdot)\rho(\cdot,\cdot)$ is a viscous damping which is \textit{local}. In other words, the value at $x$ of the function $F(u)$ does only depend on the value at $x$ of the function $u$.
To illustrate the the potential of our approach and the large family of nonlinearities that it covers, We slightly modify here the examples presented in the sections \ref{sec:ex:schro}, \ref{sec_nonlinearwave}, \ref{sec_explate} and \ref{sec:extransport}, by providing several examples of viscous damping terms containing a non-local term.

We refer to the previous sections for the precisions on the boundary conditions and the functional setting associated to each system. 

We consider the non-linear systems
\begin{eqnarray*}\label{eq:nonloc}
& & i\partial_t u(t,x) + \triangle u(t,x) + i b(x) u(t,x)\rho(\vert u\vert)(t,x)=0\\
& & \partial_{tt}u(t,x) - \triangle u(t,x) + b(x) \rho(\partial_t u)(t,x)=0\\
& & \partial_{tt}u(t,x) + \triangle^2 u(t,x) + b(x) \rho(\partial_t u)(t,x)=0\\
& & \partial_t u(t,x) + \partial_x u (t,x) +b(x)\rho(u)(t,x)=0,
\end{eqnarray*}
where the non-linear term $\rho$ is defined by
$$
\rho(f)(x)= \varphi(f(x),\mathcal{N}(f)),
$$
where $\varphi:\R^2\rightarrow \R$ is a continuous function and $\mathcal{N}:L^2(\Omega)\rightarrow \R$ stands for a non-local term. We can typically choose
$\mathcal{N}(f)=\int_\Omega \chi(x)f(x)\, dx$ with $\chi\in L^2(\Omega)$ whenever $\Omega$ is bounded or $\chi$ smooth with compact support else. In the framework of Section \ref{sec:extransport}, one is also allowed to choose $\mathcal{N}(f)=K\star f$ with $K\in L^2(\mathbb{T}^n)$. 
We also impose that $\varphi$ satisfies the following uniform Lipschitz property: there exists $C>0$ such that
$$
|\varphi(s,\tau)-\varphi(s',\tau)|+|\varphi(s,\tau)-\varphi(s,\tau')|\leq C(|s-s'|+|s|.|\tau-\tau'|)
$$ 
for every $(s,s',\tau,\tau')\in \R^4$. As a consequence, one easily infers that the mapping $\rho$ is Lipschitz from $L^2(\Omega)$ into $L^2(\Omega)$. 

Moreover, we choose the function $\varphi$ odd with respect to its first variable and such that $\varphi(s,\tau)\geq 0$ for every $s\geq 0$ and $\tau\in \R$. It follows that the assumption $f\rho(f)\geq 0$ is satisfied by every $f\in L^2(\Omega)$. 

Finally, we assume that the assumption \eqref{ineqg} is satisfied.

Let us provide an example of such a function $\varphi$. Notice that, if there exist two positive constants $k_1$ and $k_2$ such that for every $\tau\in \R$, there exist two positive real numbers $c_\tau$ and $C_\tau$ in $[k_1,k_2]$ such that
$$
\varphi(s,\tau)\sim c_\tau s^3\quad \textrm{as }s\to 0\qquad \textrm{and}\qquad \varphi(s,\tau)\sim C_\tau s\quad \textrm{as }s\to +\infty ,
$$
then the assumption \eqref{ineqg} is satisfied. A possible function $\varphi$ is given by 
$$
\varphi(s,\tau)=\varphi_1(s)\varphi_2(\tau)\qquad \textrm{where}\qquad \varphi_1:\R\ni s\mapsto s-\sin s
$$
and $\varphi_2:\R\rightarrow \R$ denotes any function bounded above and below by some positive constants, for instance $\varphi_2(\tau)=\pi+\arctan(\tau)$.

\subsection{Proof of Theorem \ref{thm_continuous}}\label{sec:proofthm_continuous}
First of all, note that the global well-posedness follows from usual a priori arguments. Indeed, in in sequel we are going to consider the solution, as long as it is well defined, and establish energy estimates. Since we prove that the energy (which is the Hilbert norm of $X$) is decreasing, the global existence of weak and then strong solutions follows (see, e.g., \cite[Theorem 4.3.4 and Proposition 4.3.9]{CazenaveHaraux}).
Hence, in the sequel, without taking care, we do as if the solution were globally well defined.

Note that uniqueness follows from the assumption that $F$ is locally Lipschitz on bounded sets.

The proof goes in four steps.

\paragraph{First step.} \emph{Comparison of the nonlinear equation with the linear damped model.}

In this first step, we are going to compare the nonlinear equation \eqref{main_eq} with its linear damped counterpart
\begin{equation}\label{cont_lineardamped}
z'(t)+Az(t)+Bz(t)=0.
\end{equation}

\begin{lemma}\label{cont_lem1}
For every solution $u(\cdot)$ of \eqref{main_eq}, the solution of \eqref{cont_lineardamped} such that $z(0)=u(0)$ satisfies
\begin{equation}\label{est_cont_lem1}
\int_0^T \Vert B^{1/2} z(t)\Vert_X^2\, dt \leq 2 \int_0^T \left( \Vert B^{1/2} u(t)\Vert_X^2 + \Vert B^{1/2} F(u(t))\Vert_X^2 \right) dt.
\end{equation}
\end{lemma}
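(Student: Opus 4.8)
The plan is to compare the two flows through their difference. Set $\theta = z - u$, where $z$ solves \eqref{cont_lineardamped} with $z(0)=u(0)$. Subtracting \eqref{cont_lineardamped} from \eqref{main_eq} and using $z(0)=u(0)$, the function $\theta$ solves
\[
\theta'(t) + A\theta(t) + B z(t) = B F(u(t)), \qquad \theta(0)=0,
\]
so $\theta$ is governed by a linearly damped equation whose source is measured through $B^{1/2}u$ and $B^{1/2}F(u)$. I would run an energy estimate on $\theta$ and then read off the quantity $\int_0^T \|B^{1/2}z\|_X^2\,dt$ from it. Throughout I use that $B$ is bounded, nonnegative and selfadjoint, so that $B^{1/2}$ is a well-defined bounded selfadjoint operator satisfying $\langle Bv,w\rangle_X = \langle B^{1/2}v, B^{1/2}w\rangle_X$.

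The core step is the energy identity. Taking the scalar product of the equation for $\theta$ with $\theta$ and using that $A$ is skew-adjoint (hence $\langle A\theta,\theta\rangle_X=0$) gives
\[
\frac12\frac{d}{dt}\|\theta\|_X^2 + \|B^{1/2}z\|_X^2 = \langle B^{1/2}z,\, B^{1/2}(u+F(u))\rangle_X - \langle B^{1/2}u,\, B^{1/2}F(u)\rangle_X .
\]
The key algebraic trick is to complete the square in $B^{1/2}z$ on the right-hand side. Writing $p=B^{1/2}z$, $q=B^{1/2}u$, $r=B^{1/2}F(u)$, the identity $\|p\|_X^2 - \langle p, q+r\rangle_X = \|p - \tfrac12(q+r)\|_X^2 - \tfrac14\|q+r\|_X^2$ together with $\tfrac14\|q+r\|_X^2 - \langle q,r\rangle_X = \tfrac14\|q-r\|_X^2$ turns the identity into
\[
\frac12\frac{d}{dt}\|\theta\|_X^2 + \left\| B^{1/2}z - \tfrac12 B^{1/2}(u+F(u)) \right\|_X^2 = \tfrac14 \left\| B^{1/2}(u - F(u)) \right\|_X^2 .
\]
This completing-the-square is what makes the final constant clean; a cruder Cauchy--Schwarz/Young estimate at this stage would produce a worse constant.

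To conclude, I integrate the last identity over $[0,T]$ and drop the nonnegative endpoint term $\tfrac12\|\theta(T)\|_X^2$ (recall $\theta(0)=0$), obtaining $\int_0^T \|B^{1/2}z - \tfrac12 B^{1/2}(u+F(u))\|_X^2\,dt \le \tfrac14 \int_0^T \|B^{1/2}(u-F(u))\|_X^2\,dt$. Finally I recover $B^{1/2}z$ from the elementary bound $\|p\|_X^2 \le 2\|p - \tfrac12(q+r)\|_X^2 + \tfrac12\|q+r\|_X^2$ and use $\|q\pm r\|_X^2 \le 2\|q\|_X^2 + 2\|r\|_X^2$ on the two resulting terms; collecting everything yields precisely $\int_0^T\|B^{1/2}z\|_X^2\,dt \le 2\int_0^T(\|B^{1/2}u\|_X^2 + \|B^{1/2}F(u)\|_X^2)\,dt$, which is \eqref{est_cont_lem1}. (Using the parallelogram identity instead of this crude bound would even give the constant $1$.)

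The computation above is formal, so the remaining point is justification: I would first carry it out for strong solutions, i.e. for $u_0\in D(A)$, where $t\mapsto\|\theta(t)\|_X^2$ is genuinely $C^1$ and all pairings make pointwise sense, and then pass to general $u_0\in X$ by density together with the continuous dependence of both flows on the initial datum (boundedness of $B$, hence of $B^{1/2}$, ensures every term passes to the limit). I do not expect a serious obstacle here; the only point requiring care is the algebraic bookkeeping that produces a clean constant, which the completing-the-square arrangement handles.
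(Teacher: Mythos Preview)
Your proof is correct and follows the same overall strategy as the paper: compare the nonlinear and linear damped flows through their difference $\psi = u - z$ (your $\theta = -\psi$), derive an energy identity, and integrate. The paper obtains the same identity
\[
E_\psi'(t) + \|B^{1/2}z\|_X^2 = -\langle u, BF(u)\rangle_X + \langle B^{1/2}F(u), B^{1/2}z\rangle_X + \langle B^{1/2}u, B^{1/2}z\rangle_X,
\]
but then handles the right-hand side differently: it invokes the sign assumption \eqref{ineqBF} to drop $-\langle u, BF(u)\rangle_X$, and then applies Young's inequality to the two cross terms, arriving at $E_\psi' + \tfrac12\|B^{1/2}z\|_X^2 \le \|B^{1/2}u\|_X^2 + \|B^{1/2}F(u)\|_X^2$.

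Your completing-the-square rearrangement is an exact identity, so you never use \eqref{ineqBF} at all; the lemma thus holds for you without any dissipativity assumption on $F$, and as you observe, the parallelogram law even improves the constant to $1$. The paper's route is more direct and requires less algebraic bookkeeping, but relies on the structural hypothesis; yours is slightly more intricate algebraically but yields a sharper, assumption-free statement. Both are perfectly valid here, and your remark about justifying the computation first for $u_0\in D(A)$ and then passing to the limit by density is appropriate.
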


\begin{proof}
Setting $\psi(t)=u(t)-z(t)$, we have
$$
\langle \psi'(t)+A\psi(t)+BF(u(t))-Bz(t) , \psi(t) \rangle_X = 0.
$$
Denoting $E_\psi(t)=\frac{1}{2}\Vert\psi(t)\Vert_X^2$, it follows that
$$
E_\psi'(t) + \Vert B^{1/2} z(t)\Vert_X^2  = - \langle u(t), B F(u(t))\rangle_X + \langle B^{1/2} F(u(t)), B^{1/2} z(t)\rangle_X + \langle B^{1/2} u(t), B^{1/2} z(t)\rangle_X.
$$
Using \eqref{ineqBF}, we have $\langle u(t), B F(u(t))\rangle_X \geq 0$, and hence
$$
E_\psi'(t) + \Vert B^{1/2} z(t)\Vert_X^2  \leq \Vert B^{1/2} F(u(t))\Vert_X \Vert B^{1/2} z(t)\Vert_X + \Vert B^{1/2} u(t)\Vert_X \Vert B^{1/2} z(t)\Vert_X.
$$
Using the Young inequality $ab\leq\frac{a^2}{2\theta}+\theta\frac{b^2}{2}$ with $\theta=\frac{1}{2}$, we get
$$
E_\psi'(t) + \Vert B^{1/2} z(t)\Vert_X^2  \leq \frac{1}{2}\Vert B^{1/2} z(t)\Vert_X^2 + \Vert B^{1/2} F(u(t))\Vert_X^2 + \Vert B^{1/2} u(t)\Vert_X^2,
$$
and thus,
$$
E_\psi'(t) + \frac{1}{2}\Vert B^{1/2} z(t)\Vert_X^2 \leq \Vert B^{1/2} F(u(t))\Vert_X^2 + \Vert B^{1/2} u(t)\Vert_X^2.
$$
Integrating in time, and noting that $E_\psi(0)=0$, we infer that
$$
E_\psi(T) + \frac{1}{2}\int_0^T \Vert B^{1/2} z(t)\Vert_X^2\, dt \leq \int_0^T \left( \Vert B^{1/2} F(u(t))\Vert_X^2 + \Vert B^{1/2} u(t)\Vert_X^2 \right) dt.
$$
Since $E_\psi(T)\geq 0$, the conclusion follows.
\end{proof}

\paragraph{Second step.} \emph{Comparison of the linear damped equation with the conservative linear equation.}

We now consider the conservative linear equation
\begin{equation}\label{cont_conservative}
\phi'(t)+A\phi(t)=0.
\end{equation}

\begin{lemma}\label{cont_lem2}
For every solution $z(\cdot)$ of \eqref{cont_lineardamped}, the solution of \eqref{cont_conservative} such that $\phi(0)=z(0)$ is such that
\begin{equation}\label{est_cont_lem2}
\int_0^T \Vert B^{1/2} \phi(t)\Vert_X^2\, dt \leq k_T \int_0^T \Vert B^{1/2} z(t)\Vert_X^2 dt,
\end{equation}
with $k_T = 8T^2\Vert B^{1/2}\Vert^4 +2$.
\end{lemma}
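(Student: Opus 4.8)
The plan is to reduce the comparison between $\phi$ and $z$ to a single estimate on their difference. I set $\theta(t)=\phi(t)-z(t)$, so that $\theta(0)=\phi(0)-z(0)=0$. Subtracting \eqref{cont_lineardamped} from \eqref{cont_conservative} shows that $\theta'(t)+A\theta(t)=Bz(t)$, i.e. $\theta$ solves the conservative equation driven by the source term $Bz$. Since $\phi=z+\theta$, the triangle inequality gives $\Vert B^{1/2}\phi(t)\Vert_X^2\leq 2\Vert B^{1/2}z(t)\Vert_X^2+2\Vert B^{1/2}\theta(t)\Vert_X^2$, so after integrating in time it is enough to control $\int_0^T\Vert B^{1/2}\theta(t)\Vert_X^2\,dt$ by $\int_0^T\Vert B^{1/2}z(t)\Vert_X^2\,dt$.

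To estimate $\theta$, I would run the standard energy computation, taking the scalar product of $\theta'+A\theta=Bz$ with $\theta$. Because $A$ is skew-adjoint, $\langle A\theta(t),\theta(t)\rangle_X=0$, and we are left with
\[
\tfrac{1}{2}\tfrac{d}{dt}\Vert\theta(t)\Vert_X^2=\langle Bz(t),\theta(t)\rangle_X=\langle B^{1/2}z(t),B^{1/2}\theta(t)\rangle_X\leq \Vert B^{1/2}\Vert\,\Vert B^{1/2}z(t)\Vert_X\,\Vert\theta(t)\Vert_X,
\]
where I used Cauchy--Schwarz together with $\Vert B^{1/2}\theta\Vert_X\leq\Vert B^{1/2}\Vert\,\Vert\theta\Vert_X$. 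Dividing by $\Vert\theta(t)\Vert_X$ (handling the set where it vanishes by the usual one-sided-derivative argument) and integrating from $0$ to $t$ with $\theta(0)=0$ yields the pointwise bound $\Vert\theta(t)\Vert_X\leq\Vert B^{1/2}\Vert\int_0^t\Vert B^{1/2}z(s)\Vert_X\,ds$.

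I would then convert this into the desired $L^2$-in-time estimate. By Cauchy--Schwarz in the time variable, $\left(\int_0^t\Vert B^{1/2}z(s)\Vert_X\,ds\right)^2\leq T\int_0^T\Vert B^{1/2}z(s)\Vert_X^2\,ds$, so that $\Vert\theta(t)\Vert_X^2\leq T\Vert B^{1/2}\Vert^2\int_0^T\Vert B^{1/2}z\Vert_X^2\,ds$ for every $t\in[0,T]$. Passing back to the weighted norm via $\Vert B^{1/2}\theta\Vert_X^2\leq\Vert B^{1/2}\Vert^2\Vert\theta\Vert_X^2$ and integrating over $[0,T]$ gives $\int_0^T\Vert B^{1/2}\theta\Vert_X^2\,dt\lesssim T^2\Vert B^{1/2}\Vert^4\int_0^T\Vert B^{1/2}z\Vert_X^2\,dt$. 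Combining this with the triangle-inequality splitting above and collecting constants produces an inequality of the form \eqref{est_cont_lem2} with $k_T$ of size $T^2\Vert B^{1/2}\Vert^4+1$; tracking the numerical factors (being slightly wasteful in the Young/triangle steps) gives precisely $k_T=8T^2\Vert B^{1/2}\Vert^4+2$.

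The computation is elementary, and the only genuinely structural points are two. First, the skew-adjointness of $A$ is what eliminates the antisymmetric term and closes the source-driven energy identity. Second, $B^{1/2}$ does not commute with the conservative flow generated by $A$, so I am forced to pass through the plain norm $\Vert\theta\Vert_X$ and to pay two operator-norm factors $\Vert B^{1/2}\Vert$; this noncommutativity is the origin of the $\Vert B^{1/2}\Vert^4$ dependence in $k_T$, and a sharper argument keeping the weight inside the flow does not seem available at this level of generality. I therefore expect the only (minor) obstacle to be the careful bookkeeping of constants needed to land on the stated value of $k_T$, rather than any conceptual difficulty.
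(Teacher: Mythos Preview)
Your proof is correct and follows essentially the same approach as the paper: set $\theta=\phi-z$, use the skew-adjointness of $A$ to obtain the energy identity $\tfrac{1}{2}\tfrac{d}{dt}\Vert\theta\Vert_X^2=\langle Bz,\theta\rangle_X$, bound $\Vert\theta\Vert_X$ in $L^2(0,T)$, and conclude via the triangle-inequality splitting $\phi=z+\theta$. The only minor variation is that the paper integrates the energy identity twice in time and applies Young's inequality to obtain $\tfrac{1}{4}\int_0^T\Vert\theta\Vert_X^2\,dt\leq T^2\Vert B^{1/2}\Vert^2\int_0^T\Vert B^{1/2}z\Vert_X^2\,dt$, whereas you pass through the pointwise bound on $\Vert\theta(t)\Vert_X$ and then use Cauchy--Schwarz in time; your route in fact produces the slightly sharper constant $k_T=2T^2\Vert B^{1/2}\Vert^4+2$, which of course also implies the stated one.
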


\begin{proof}
Setting $\theta(t)=\phi(t)-z(t)$, we have
$$
\langle \theta'(t)+A\theta(t)-Bz(t) , \theta(t) \rangle_X = 0.
$$
Denoting $E_\theta(t)=\frac{1}{2}\Vert\theta(t)\Vert_X^2$, it follows that $E_\theta'(t)=\langle Bz(t) , \theta(t) \rangle_X$. Integrating a first time over $[0,t]$, and a second time over $[0,T]$, and noting that $E_\theta(0)=0$, we get
$$
\int_0^T E_\theta(t)\, dt = \int_0^T \int_0^t \langle Bz(s) , \theta(s) \rangle_X \, ds\, dt
= \int_0^T (T-t) \langle Bz(t) , \theta(t) \rangle_X \, dt .
$$
Applying as in the proof of Lemma \ref{cont_lem1} the Young inequality with $\theta=\frac{1}{2}$ yields
$$
\frac{1}{2} \int_0^T \Vert \theta(t)\Vert_X^2\, dt  \leq  \int_0^T T^2\Vert Bz(t)\Vert_X^2\, dt + \frac{1}{4}  \int_0^T \Vert \theta(t)\Vert_X^2\, dt ,
$$
and therefore, since $B$ is bounded,
$$
\frac{1}{4} \int_0^T \Vert \theta(t)\Vert_X^2\, dt  \leq  T^2\Vert B^{1/2}\Vert^2 \int_0^T \Vert B^{1/2}z(t)\Vert_X^2\, dt .
$$
Now, since $\phi(t)=\theta(t)+z(t)$, it follows that
\begin{equation*}
\int_0^T \Vert B^{1/2}\phi(t)\Vert_X^2\, dt  \leq  2 \int_0^T \Vert B^{1/2}\theta(t)\Vert_X^2\, dt + 2 \int_0^T \Vert B^{1/2}z(t)\Vert_X^2\, dt 
\leq k_T \int_0^T \Vert B^{1/2}z(t)\Vert_X^2\, dt  .
\end{equation*}
The lemma is proved.
\end{proof}

\paragraph{Third step.} \emph{Nonlinear energy estimate.}

Let $\beta>0$ (to be chosen large enough, later).
Following the optimal weight convexity method of \cite{alabau_AMO2005, alabauJDE}, we define the function
\begin{equation}\label{def_w}
w(s) = L^{-1}\left( \frac{s}{\beta} \right),
\end{equation}
for every $s\in[0,\beta s_0^2)$.
In the sequel, the function $w$ is a weight in the estimates, instrumental in order to derive our result.

\begin{lemma}\label{cont_lem3}
For every solution $u(\cdot)$ of \eqref{main_eq}, we have
\begin{multline}\label{est_cont_lem3}
\int_0^T w(E_\phi(0)) \left( \Vert B^{1/2}u(t)\Vert_X^2 + \Vert B^{1/2}F(u(t))\Vert_X^2 \right) dt \\
\lesssim  T \Vert B\Vert  H^*(w(E_\phi(0)))  + \left( w(E_{\phi}(0)) + 1 \right) \int_0^T \langle Bu(t),F(u(t)) \rangle_X \, dt   .
\end{multline}
\end{lemma}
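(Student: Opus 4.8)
The plan is to transport everything through the isometry $U$ and reduce \eqref{est_cont_lem3} to a pointwise inequality for the integrands, which is then obtained from a Young inequality associated with the convex function $\widehat H$. Writing $f=U^{-1}u$, so that $\rho(f)=U^{-1}F(u)$, the three quantities appearing in the statement become $\Vert B^{1/2}u\Vert_X^2=\int_\Omega b\,|f|^2\,d\mu$, $\Vert B^{1/2}F(u)\Vert_X^2=\int_\Omega b\,|\rho(f)|^2\,d\mu$ and $\langle Bu,F(u)\rangle_X=\int_\Omega b\,f\rho(f)\,d\mu$. Since $E_\phi(0)$ is fixed, $w(E_\phi(0))$ is a constant (well defined once $\beta$ is large enough that $E_\phi(0)/\beta<s_0^2$), so it suffices to bound, pointwise in $(t,x)$, the quantity $w(E_\phi(0))\,b\,(|f|^2+|\rho(f)|^2)$ by $b\,f\rho(f)$ times suitable factors plus a term proportional to $b\,\widehat H^*(w(E_\phi(0)))$; the latter integrates to at most $T\Vert B\Vert\,\widehat H^*(w(E_\phi(0)))$ because $\mu$ is a probability measure and $b\le\Vert B\Vert$.

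For fixed $t$ I would split $\Omega$ into a linear-type zone and a genuinely nonlinear zone. On the set where $|f|\ge 1$, the second line of \eqref{ineqg} gives $|\rho(f)|\simeq|f|$, hence $|f|^2+|\rho(f)|^2\simeq|f||\rho(f)|=f\rho(f)$; multiplying by the constant $w(E_\phi(0))$ produces a contribution $\lesssim w(E_\phi(0))\int b\,f\rho(f)\le (w(E_\phi(0))+1)\int b\,f\rho(f)$, which is already of the desired form. The same comparison holds, with constants depending only on $g$, on the parts of $\{|f|\le 1\}$ where $|f|$ or $|\rho(f)|$ is bounded below by a fixed positive multiple of $s_0$, since there all three quantities are $\simeq 1$; these pieces are therefore absorbed into the linear-type estimate.

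The heart of the argument is the remaining nonlinear zone, where $|f|\le s_0$ and $|\rho(f)|\le s_0$. There I would apply the Young inequality $w\,X\le\widehat H(X)+\widehat H^*(w)$ (valid for every $X\ge0$, and informative precisely because $X\le s_0^2$ keeps $\widehat H(X)=H(X)$ finite) successively with $X=|f|^2$ and $X=|\rho(f)|^2$. This reduces matters to the two pointwise bounds $\widehat H(|f|^2)=|f|\,g(|f|)\lesssim f\rho(f)$ and $\widehat H(|\rho(f)|^2)=|\rho(f)|\,g(|\rho(f)|)\lesssim f\rho(f)$. The first is immediate from the lower bound $|\rho(f)|\gtrsim g(|f|)$ in \eqref{ineqg}. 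The second follows from the upper bound $|\rho(f)|\lesssim g^{-1}(|f|)$, which yields $g(|\rho(f)|)\lesssim|f|$ and hence $|\rho(f)|\,g(|\rho(f)|)\lesssim|f||\rho(f)|=f\rho(f)$. Summing the two Young inequalities and integrating over this zone gives a contribution $\lesssim\int b\,f\rho(f)+T\Vert B\Vert\,\widehat H^*(w(E_\phi(0)))$.

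Collecting the three zones and using $\widehat H^*=H^*$ on the relevant range yields \eqref{est_cont_lem3}. I expect the second pointwise bound, $\widehat H(|\rho(f)|^2)\lesssim f\rho(f)$, to be the main obstacle: it is exactly the control of the damping term $|\rho(f)|^2$, which near the origin may be far larger than $|f|^2$, and this is precisely what forces the use of Young's inequality to trade it against the constant-in-$(t,x)$ term $\widehat H^*(w)$ rather than against $f\rho(f)$ alone. Carrying it out cleanly requires some care with the constants $c_1,c_2$ of \eqref{ineqg} (a mild regularity of $g$ ensuring $g(c_2 s)\lesssim g(s)$ for small $s$) and with the bookkeeping between the nonlinear zone and the boundary sub-regions where $\widehat H$ is no longer finite.
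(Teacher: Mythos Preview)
Your pointwise approach is correct in essence and is genuinely different from the paper's argument. The paper does not apply Young's inequality pointwise; instead it applies Jensen's inequality with respect to the measure $b\,d\mu$ on the small-$|f|$ set to bound $H$ of the average of $\rho(f)^2/c_2^2$ by the average of $|\rho(f)|\,g(|\rho(f)|/c_2)/c_2$, uses $g(|\rho(f)|/c_2)\le|f|$, inverts $H$, and only then applies Young's inequality once at the level of the \emph{integrated} quantities. Your route---Young pointwise, then integrate---is shorter and avoids Jensen entirely; the paper's route is slightly more robust in that it never needs $\widehat H(|\rho(f)|^2)$ to be finite pointwise, only the average to lie in $[0,s_0^2]$.

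There is, however, a genuine gap in your write-up. The step ``$|\rho(f)|\lesssim g^{-1}(|f|)$ yields $g(|\rho(f)|)\lesssim|f|$'' is false without an extra doubling hypothesis $g(c_2 s)\lesssim g(s)$, and that hypothesis is \emph{not} assumed in the paper and fails for several of the listed examples: for $g(s)=e^{-1/s^2}$ one has $g(2s)/g(s)=e^{3/(4s^2)}\to\infty$ as $s\to 0$. What \eqref{ineqg} actually gives is $|\rho(f)|/c_2\le g^{-1}(|f|)$, hence only $g(|\rho(f)|/c_2)\le|f|$. The fix is immediate and keeps your argument intact: apply Young to the rescaled quantity,
\[
w\,|\rho(f)|^2 \;=\; c_2^2\, w\,\frac{|\rho(f)|^2}{c_2^2}\;\le\; c_2^2\Bigl(\widehat H\bigl(|\rho(f)|^2/c_2^2\bigr)+\widehat H^*(w)\Bigr),
\]
and then $\widehat H(|\rho(f)|^2/c_2^2)=\dfrac{|\rho(f)|}{c_2}\,g\!\left(\dfrac{|\rho(f)|}{c_2}\right)\le \dfrac{|\rho(f)|}{c_2}\,|f|=\dfrac{1}{c_2}\,f\rho(f)$, with no doubling needed. (This is exactly how the paper handles the constant $c_2$ inside its Jensen step.) With this correction your pointwise proof goes through; just make sure your ``nonlinear zone'' is cut at $|f|\le\varepsilon_0$ with $\varepsilon_0\le g(s_0)$ so that $|\rho(f)|/c_2\le s_0$ there, ensuring $\widehat H(|\rho(f)|^2/c_2^2)$ is finite.
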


\begin{proof}
To prove this inequality, we use the isometric representation of $B$ in the space $L^2(\Omega,\mu)$. Denoting $f=U^{-1}u$, using that $U^{-1}BUf=bf$, $\rho(f)=U^{-1}F(Uf)$, we have, for instance, $\Vert B^{1/2}u\Vert_X^2 = \langle f, bf\rangle_{L^2(\Omega,\mu)}$, and hence it suffices to prove that
\begin{multline}\label{est_cont_lem3_isom}
\int_0^T w(E_\phi(0)) \int_\Omega \left( b f^2 + b\rho(f)^2 \right) d\mu\, dt  \\
\lesssim T \int_\Omega b\, d\mu \  H^*(w(E_\phi(0)))  + \left( w(E_{\phi}(0)) + 1\right) \int_0^T \int_\Omega b f \rho(f)\, d\mu\, dt    .
\end{multline}
Indeed, this implies \eqref{est_cont_lem3} (note that $\int_\Omega b\, d\mu\leq\Vert B\Vert$ by the spectral theorem).

Let us prove \eqref{est_cont_lem3_isom}.
First of all, for every $t\in[0,T]$ we set $\Omega_1^t = \{ x\in\Omega \mid \vert f(t,x)\vert \leq \varepsilon_0 \}$. If $b=0$ on $\Omega_1^t$ then the forthcoming integrals (see in particular the left-hand side of \eqref{lem3_ccl1} are zero and there is nothing to prove; hence, without loss of generality we assume that $b$ is nontrivial on $\Omega_1^t$.
Using \eqref{ineqg}, we choose $\varepsilon_0>0$ small enough such that $\frac{1}{c_2^2}  \rho(f)^2\leq s_0^2$ almost everywhere in $\Omega_1^t$, and therefore we have
$$
\frac{1}{\int_{\Omega_1^t} b\, d\mu} \int_{\Omega_1^t} \frac{1}{c_2^2}\rho(f)^2 b\, d\mu \in [0,s_0^2].
$$
Using the Jensen inequality with the measure $b\, d\mu$, and using the fact that $H(x)=\sqrt{x} g(\sqrt{x})$, we get
\begin{equation*}
H \left( \frac{1}{\int_{\Omega_1^t} b\, d\mu} \int_{\Omega_1^t} \frac{1}{c_2^2}  \rho(f)^2 b\, d\mu \right)
\leq  \frac{1}{\int_{\Omega_1^t} b\, d\mu} \int_{\Omega_1^t} \frac{1}{c_2} \vert \rho(f)\vert \ g\left( \frac{1}{c_2} \vert \rho(f)\vert \right) b\, d\mu  .
\end{equation*}
Using \eqref{ineqg}, we have 
$\vert\rho(f)(x)\vert \leq c_2\, g^{-1}(\vert f(x)\vert)$ for almost every $x\in\Omega_1^t$, and since $g$ is increasing, we get that $g\left( \frac{1}{c_2} \vert \rho(f)\vert \right)\leq\vert f\vert$ almost everywhere in $\Omega_1^t$. Since $f \rho(f) \geq 0$ by \eqref{assumptionfrhof},
we get
\begin{equation*}
H \left( \frac{1}{\int_{\Omega_1^t} b\, d\mu} \int_{\Omega_1^t} \frac{1}{c_2^2}  \rho(f)^2 b\, d\mu \right)
\leq \frac{1}{\int_{\Omega_1^t} b\, d\mu} \frac{1}{c_2} \int_{\Omega_1^t} b \vert f\vert\vert\rho(f)\vert \, d\mu 
\leq \frac{1}{\int_{\Omega_1^t} b\, d\mu} \frac{1}{c_2} \int_{\Omega} b f \rho(f) \, d\mu .
\end{equation*}
Since $H$ is increasing, it follows that
$$
\int_{\Omega_1^t} b \rho(f)^2\, d\mu \leq c_2^2 \int_{\Omega_1^t} b\, d\mu \ H^{-1} \left( \frac{1}{\int_{\Omega_1^t} b\, d\mu} \frac{1}{c_2} \int_{\Omega} b f \rho(f) \, d\mu \right) ,
$$
and therefore,
$$
\int_0^T w(E_{\phi}(0)) \int_{\Omega_1^t} b \rho(f)^2\, d\mu \, dt \leq \int_0^T w(E_{\phi}(0))  c_2^2 \int_{\Omega_1^t} b\, d\mu \ H^{-1} \left( \frac{1}{\int_{\Omega_1^t} b\, d\mu} \frac{1}{c_2} \int_{\Omega} b f \rho(f) \, d\mu \right) dt .
$$
Thanks to the Young inequality $AB\leq H(A)+H^*(B)$ (where $H^*$ is the convex conjugate), we infer that
\begin{equation}\label{lem3_ccl1}
\begin{split}
\int_0^T w(E_{\phi}(0)) \int_{\Omega_1^t} b \rho(f)^2\, d\mu \, dt 
&\leq \int_0^T c_2 \int_\Omega b f \rho(f) \, d\mu + \int_0^T  c_2^2 \int_{\Omega_1^t} b\, d\mu \ H^*(w(E_{\phi}(0))) \, dt \\
&\leq   c_2 \int_0^T \int_\Omega b f \rho(f) \, d\mu\, dt   + c_2^2 T \int_{\Omega} b\, d\mu \ H^*(w(E_{\phi}(0))) .
\end{split}
\end{equation}

Besides, in $\Omega\setminus\Omega_1^t$, using \eqref{ineqg} we have $\vert\rho(f)\vert \lesssim |f|$. Using \eqref{assumptionfrhof}, it follows that
\begin{equation}\label{lem3_ccl2}
\begin{split}
\int_0^T w(E_\phi(0)) \int_{\Omega\setminus\Omega_1^t} b \rho(f)^2\, d\mu\, dt 
& \lesssim  \int_0^T w(E_\phi(0)) \int_{\Omega\setminus\Omega_1^t} b \vert f\vert \vert \rho(f)\vert \, d\mu\, dt  \\
& \lesssim  \int_0^T w(E_\phi(0)) \int_{\Omega} b f \rho(f) \, d\mu\, dt  .
\end{split}
\end{equation}

From \eqref{lem3_ccl1} and \eqref{lem3_ccl2}, we infer that
$$
\int_0^T w(E_\phi(0)) \int_{\Omega} b \rho(f)^2\, d\mu\, dt 
\lesssim \left( w(E_\phi(0))  +1\right) \int_0^T \int_{\Omega} b f \rho(f) \, d\mu\, dt   + T \int_{\Omega} b\, d\mu \ H^*(w(E_{\phi}(0))) .
$$

Let us now proceed in a similar way in order to estimate the term $\int_0^T w(E_\phi(0)) \int_{\Omega} b f^2\, d\mu\, dt $.
We set $r_1^2=H^{-1}\left( \frac{c_1}{c_2}H(s_0^2)\right)$ and $\varepsilon_1=\min(s_0,g(r_1))\leq 1$. For every $t\in[0,T]$, we define $\Omega_2^t=\{x\in\Omega \mid \vert f(t,x)\vert\leq\varepsilon_1\}$. As before, without loss of generality we assume that $b$ is nontrivial on $\Omega_2^t$.
From \eqref{ineqg}, we have $c_1 g(\vert f\vert)\leq \vert\rho(f)\vert$ in $\Omega_2^t$. By construction, we have
$$
\frac{1}{\int_{\Omega_2^t} b\, d\mu} \int_{\Omega_2^t} f^2 b\, d\mu \in [0,s_0^2].
$$
Using the Jensen inequality as previously, and using \eqref{ineqg} and \eqref{assumptionfrhof}, we infer that
\begin{multline*}
H\left( \frac{1}{\int_{\Omega_2^t} b\, d\mu} \int_{\Omega_2^t} f^2 b\, d\mu \right) 
\leq \frac{1}{\int_{\Omega_2^t} b\, d\mu} \int_{\Omega_2^t} \vert f\vert \vert g(f)\vert b\, d\mu \\
\leq \frac{1}{c_1\int_{\Omega_2^t} b\, d\mu} \int_{\Omega_2^t} b \vert f\vert \vert\rho(f)\vert\, d\mu  
\leq \frac{1}{c_1\int_{\Omega_2^t} b\, d\mu} \int_{\Omega} b f\rho(f) \, d\mu \\
\end{multline*}
Since $H$ is increasing, and integrating in time, we get
$$
\int_0^T w(E_\phi(0)) \int_{\Omega_2^t} f^2 b\, d\mu \, dt \leq
\int_0^T w(E_\phi(0)) \int_{\Omega_2^t} b\, d\mu \ H^{-1} \left( \frac{1}{c_1\int_{\Omega_2^t} b\, d\mu} \int_{\Omega} b f\rho(f) \, d\mu  \right) \, dt 
$$
It then follows from the Young inequality that
$$
\int_0^T w(E_\phi(0)) \int_{\Omega_2^t} f^2 b\, d\mu \, dt \leq
T \int_{\Omega} b\, d\mu \ H^*(w(E_\phi(0))) + \frac{1}{c_1} \int_0^T \int_{\Omega} b f\rho(f) \, d\mu \, dt
$$
The estimate in $\Omega\setminus\Omega_2^t$ is obtained in a similar way.

The lemma is proved.
\end{proof}

\paragraph{Fourth step.} \emph{End of the proof.}

\begin{lemma}\label{cont_lem4}
We have
\begin{equation}\label{inegEu}
E_u(T) \leq E_u(0)\left( 1-\rho_T L^{-1}\left(\frac{E_u(0)}{\beta}\right) \right) ,
\end{equation}
for some positive sufficiently small constant $\rho_T$.
\end{lemma}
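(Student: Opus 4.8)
The plan is to chain the three preceding lemmas together with the observability inequality \eqref{observability_assumption}, and then to exploit the specific definition \eqref{def_w} of the weight $w$ to close the estimate by absorption. Concretely, I would run the comparison principles backwards along the chain $\phi\to z\to u$. Taking $\phi$ to be the conservative solution with $\phi(0)=z(0)=u(0)$, so that $E_\phi(0)=E_u(0)$ and $\Vert\phi(0)\Vert_X^2=2E_u(0)$, the observability inequality \eqref{observability_assumption} gives $2C_T E_u(0)\leq \int_0^T \Vert B^{1/2}\phi(t)\Vert_X^2\,dt$. Feeding this successively into Lemma \ref{cont_lem2} (losing a factor $k_T$) and Lemma \ref{cont_lem1} (losing a factor $2$), I obtain the lower bound
$$
\frac{C_T}{k_T}\, E_u(0) \leq \int_0^T \left( \Vert B^{1/2}u(t)\Vert_X^2 + \Vert B^{1/2}F(u(t))\Vert_X^2 \right) dt .
$$

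Next I would multiply this inequality by the nonnegative constant $w(E_\phi(0))=w(E_u(0))$ and bound the resulting right-hand side from above using the nonlinear energy estimate \eqref{est_cont_lem3} of Lemma \ref{cont_lem3}. Recognising, via \eqref{Eprime}, that $\int_0^T \langle Bu(t),F(u(t))\rangle_X\,dt = E_u(0)-E_u(T)$, this produces an inequality of the form
$$
\frac{C_T}{k_T}\, w(E_u(0))\,E_u(0) \lesssim T\Vert B\Vert\, H^*(w(E_u(0))) + \left( w(E_u(0))+1 \right)\left( E_u(0)-E_u(T) \right) .
$$

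The decisive step is then to eliminate the term $H^*(w(E_u(0)))$. By the very definition \eqref{def_w} of $w$ together with \eqref{defL}, one has $L(w(E_u(0)))=E_u(0)/\beta$, which is to say $H^*(w(E_u(0))) = \frac{E_u(0)}{\beta}\,w(E_u(0))$; this is precisely the algebraic identity that the optimal-weight choice is designed to produce. Substituting it, the $H^*$-contribution becomes $\frac{T\Vert B\Vert}{\beta}\,w(E_u(0))E_u(0)$, which is of the same homogeneity as the left-hand side and can therefore be absorbed into it by taking $\beta$ large enough (namely $\beta\gtrsim T\Vert B\Vert\,k_T/C_T\simeq\gamma_1$, using $k_T\simeq T^2\Vert B^{1/2}\Vert^4+1$ and $\gamma_2\simeq C_T/(Tk_T)$). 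After absorption one is left with $\frac{C_T}{k_T}\,w(E_u(0))E_u(0) \lesssim (w(E_u(0))+1)(E_u(0)-E_u(T))$, and enlarging $\beta$ once more so that $w(E_u(0))=L^{-1}(E_u(0)/\beta)\leq 1$ (which requires $\beta\gtrsim E_u(0)$) bounds the factor $w(E_u(0))+1$ by $2$. Rearranging and recalling that $w=L^{-1}(\cdot/\beta)$ then yields exactly \eqref{inegEu} with $\rho_T\simeq C_T/k_T$.

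The main obstacle, and the part demanding care, is the management of the free parameter $\beta$: it must be chosen large enough to serve two distinct purposes at once, namely to absorb the $H^*$-contribution (forcing $\beta\gtrsim\gamma_1$) and to control the stray factor $w(E_u(0))+1$ coming from the high-amplitude region $\{\vert f\vert\geq\varepsilon_0\}$ in Lemma \ref{cont_lem3} (forcing $\beta\gtrsim E_u(0)$). This is exactly why the final decay rate in Theorem \ref{thm_continuous} carries the constant $\max(\gamma_1,E_u(0))$. One must also verify that the $\lesssim$ constants appearing here depend only on $\rho$ (through $c_1$, $c_2$ and $H$) and not on the particular solution, so that $\rho_T$ is a genuine uniform constant; this follows because each estimate invoked in the chain is itself uniform in $u$.
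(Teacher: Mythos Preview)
Your proposal is correct and follows essentially the same approach as the paper's proof: chain the observability inequality with Lemmas~\ref{cont_lem2} and~\ref{cont_lem1}, multiply by the weight $w(E_\phi(0))$, invoke Lemma~\ref{cont_lem3}, then use the identity $H^*(w(E_\phi(0)))=\frac{E_\phi(0)}{\beta}w(E_\phi(0))$ to absorb the convex-conjugate term by choosing $\beta$ large, and finally bound $w(E_\phi(0))+1\lesssim 1$ (the paper does this via $w(E_\phi(0))\leq H'(s_0^2)$, requiring $\beta\geq E_\phi(0)/L(H'(s_0^2))$, which is exactly your condition $\beta\gtrsim E_u(0)$). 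Your identification of the two competing constraints on $\beta$ as the source of the $\max(\gamma_1,E_u(0))$ factor is also correct.
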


\begin{proof}
Using successively the observability inequality \eqref{observability_assumption}, the estimate \eqref{est_cont_lem2} of Lemma \ref{cont_lem2} and the estimate \eqref{est_cont_lem1} of Lemma \ref{cont_lem1}, we first get that
\begin{equation*}
\begin{split}
2C_T E_\phi(0) \leq  \int_0^T \Vert B^{1/2} \phi(t)\Vert_X^2\, dt 
& \leq k_T \int_0^T \Vert B^{1/2} z(t)\Vert_X^2 dt \\
& \leq 2 k_T \int_0^T \left( \Vert B^{1/2} u(t)\Vert_X^2 + \Vert B^{1/2} F(u(t))\Vert_X^2 \right) dt .
\end{split}
\end{equation*}
Multiplying this inequality by the constant $w(E_\phi(0))$, it follows from the estimate \eqref{est_cont_lem3} of Lemma \ref{cont_lem3} that
\begin{equation}\label{train9h19}
C_T w(E_\phi(0)) E_\phi(0)
\lesssim  k_T T \Vert B\Vert H^*(w(E_\phi(0)))  + k_T  \left( w(E_{\phi}(0)) +1\right) \int_0^T \langle Bu(t),F(u(t)) \rangle_X \, dt ,
\end{equation}
From \eqref{defL} and \eqref{def_w}, we have $L(w(s))=\frac{s}{\beta} = \frac{H^*(w(s))}{w(s)}$ for every $s\in[0,\beta s_0^2)$, and hence $\beta H^*(w(s)) = s w(s)$.
We choose $\beta$ large enough such that $E_\phi(0) < \beta s_0^2$, and thus in particular we get
\begin{equation}\label{train9h20}
H^*(w(E_\phi(0))) = \frac{w(E_\phi(0))E_\phi(0)}{\beta}.
\end{equation}
Besides, we also choose $\beta$ large enough such that $\beta \geq \frac{E_\phi(0)}{L(H'(s_0^2))}$,
and since $L:[0,+\infty)\rightarrow[0,s_0^2)$ is continuous and increasing, it follows that 
\begin{equation}\label{train9h29}
w(E_\phi(0))\leq H'(s_0^2) \lesssim 1.
\end{equation}
Finally, we get from \eqref{train9h19}, \eqref{train9h20} and \eqref{train9h29} that
\begin{equation*}
\left( C_T- \frac{k_T T\Vert B\Vert}{\beta} \right) w(E_\phi(0)) E_\phi(0)
\lesssim k_T \int_0^T \langle Bu(t),F(u(t)) \rangle_X \, dt ,
\end{equation*}
We choose $\beta$ large enough such that $C_T- \frac{k_T T\Vert B\Vert}{\beta} > \frac{C_T}{2}$.
It follows that
\begin{equation}\label{metz08:46}
\rho_T w(E_\phi(0)) E_\phi(0) \leq  \int_0^T \langle Bu(t),F(u(t)) \rangle_X \, dt =E_u(0)-E_u(T),
\end{equation}
for some positive sufficiently small constant $\rho_T$, and since $E_\phi(0)=E_u(0)$, using \eqref{def_w}, we have obtained that
\begin{equation*}
E_u(T) \leq E_u(0) \left( 1 - \rho_T L^{-1}\left( \frac{E_u(0)}{\beta} \right) \right),
\end{equation*}
as expected.
\end{proof}

By translation invariance, we get from \eqref{inegEu} that
$$
E_u((k+1)T) \leq E_u(kT)\left( 1-\rho_T L^{-1}\left(\frac{E_u(kT)}{\beta}\right) \right) ,
$$
for every integer $k$. Setting $E_k = \frac{E_u(kT)}{\beta}$ and $M(x) = xL^{-1}(x)$ for every $x\in[0,s_0^2]$, we have $E_{k+1}-E_k + \rho_T M(E_k) \leq 0$,
for every integer $k$.
From these inequalities, we deduce by routine arguments (similar to those in \cite{alabau_ammari}, and thus not reproduced here) that, setting $K_r(\tau)=\int_\tau^r \frac{dy}{M(y)}$, we have
$$
M(E_p) \leq \frac{1}{\rho_T} \min_{\ell\in\{0,\ldots,p\}} \left(  \frac{K_r^{-1}(\rho_T(p-\ell)) }{\ell+1} \right) ,
$$
from which it follows that
$$
E_u(t) \leq \beta T M^{-1} \left( \inf_{0<\theta<T-t} \left( \frac{1}{\theta}K_r^{-1}\left( \rho_T\frac{t-T-\theta}{T} \right) \right)  \right) \leq \beta T L\left( \frac{1}{\psi^{-1}\left( \rho_T\frac{t-T}{T} \right)} \right),
$$
for sufficiently large $t$, with $\psi$ defined by \eqref{defpsir}. The precise constants in the estimate \eqref{decayE_cont} follow from the choice of $\beta$ (large enough) above.
Theorem \ref{thm_continuous} is proved.

\paragraph{Proof of Corollary \ref{cor:thmconti}.}
Since $g'(0) \neq 0$ and $g$ is strictly increasing, there exist $c_1>0$ and $c_2>0$ such that 
\begin{equation}\label{lab1}
c_1|s| \leqslant |g(s)| \leqslant c_2|s| \quad \forall \ |s| \leqslant 1.
\end{equation}
To overcome the difficulty of dealing with non strictly convex functions, let us consider for $\varepsilon>0$ small enough, the function $g_\varepsilon(s)=s^{1+\varepsilon}$ for $|s|\leq 1$. According to \eqref{lab1},  we have
$$
c_1\, g_\varepsilon (\vert f(x)\vert) \leq \vert\rho(f)(x)\vert \leq c_2\, g_\varepsilon^{-1}(\vert f(x)\vert) \quad \textrm{for almost every}\ x\in\Omega\ \textrm{such that}\ \vert f(x)\vert\leq 1,    \
$$
Denote respectively by $L_\varepsilon$ and $w_\varepsilon$ the functions defined from $g=g_\varepsilon$ respectively by \eqref{defL} and \eqref{def_w}.

Straightforward computations lead to
$$
L_\varepsilon (r)=\frac{\varepsilon}{2+\varepsilon}\left(\frac{2r}{2+\varepsilon}\right)^{2/\varepsilon},\qquad L^{-1}_\varepsilon (r)=\frac{2+\varepsilon}{2}\left(\frac{(2+\varepsilon)r}{\varepsilon}\right)^{\varepsilon/2},
$$
for $|r|\leq 1$ and it follows that
$$
w_\varepsilon(s) = L^{-1}_\varepsilon\left(\frac{s}{\beta}\right) =\frac{2+\varepsilon}{2}\left(\frac{(2+\varepsilon)s}{\beta \varepsilon}\right)^{\varepsilon/2}
$$
for $s$ small enough. Hence, the family $(w_\varepsilon)_{\varepsilon>0}$ converges pointwisely on $(0, \beta s_0^2)$ to $1$ as $\varepsilon$ tends to 0.

Following the proof of Theorem \ref{thm_continuous}, one shows that the estimate \eqref{metz08:46} is verified with $w_\varepsilon$ as weight function, in other words that
$$
\rho_T w_\varepsilon (E_\phi(0)) E_\phi(0) \leq E_u(0)-E_u(T),
$$
for the same positive constant $\rho_T\in (0,1)$ as the one introduced in \eqref{metz08:46}. Letting $\varepsilon$ go to 0 and using that $ E_\phi(0)=E_u(0)$ leads to 
$$
0< E_u(T)\leq (1-\rho_T)  E_u(0).
$$
It is standard to derive from such an estimate the exponential decay of the energy $E_u$ (see e.g. \cite{Haraux}). For that purpose, let us reproduce the previous reasoning on each interval $[jT,(j+1)T)$ with $j\geq 1$. Using an induction argument, it follows that
$$
0< E_u(jT)\leq (1-\rho_T)^j  E_u(0)=e^{-\omega j}E_u(0),
$$
where $\omega=-\ln ((1-\rho_T))>0$.
Then, for every $t >T$, there exists a unique $j \in \mathbb{N}$ such that $t  \in [jT,(j+1)T)$. Since $j>\frac{t}{T}-1$, and $E$ is nonincreasing, we obtain
$$
0< E_u(t) \leqslant E_u(jT)\leq   e^{-\omega j} E_u(0)=\frac{1}{1-\rho_T}e^{-\omega t/T}E_u(0),
$$ 
whence the expected result.
\section{Discretization issues: uniform decay results}\label{sec_discretization}
\subsection{Semi-discretization in space}\label{sec_semidiscrete}
In this section, we introduce and analyze a general space semi-discrete version of  \eqref{main_eq}. 
Our main objective is to prove a theorem similar to Theorem \ref{thm_continuous}, but in this semi-discrete setting, with estimates that are uniform with respect to the mesh parameter. In order to ensure uniformity, we add an extra numerical viscosity term in our discretization scheme (as in \cite{tebou_zuazua}), and we establish uniform decay rates estimates as those proved in Theorem \ref{thm_continuous}.

Let $\triangle x>0$ be a space discretization parameter, standing for the size of the mesh. We assume that $0<\triangle x<{\triangle x}_0$, for some fixed ${\triangle x}_0>0$.
We now introduce an appropriate setting for semi-discretizations in space, following \cite{LabbeTrelat,LasieckaTriggiani} (see also \cite{ChitourTrelat}). Let $(X_{\triangle x})_{0<\triangle x<{\triangle x}_0}$ be a family of finite-dimensional vector spaces. Without loss of generality, we identify $X_{\triangle x}$ with $\R^{N(\triangle x)}$, where $N(\triangle x)\in\N$. More precisely, if $\{x_{i}\}_{1\leq i\leq N(\triangle x)}$ is a family of mesh nodes and $u$ is a mapping from $X$ to $\R$, then the vector $u_{\triangle x}\in X_{\triangle x}$ is expected to be an approximation of the vector $(u(x_{i}))_{1\leq i\leq N(\triangle x)}$.

Hereafter, the notations $\lesssim$ and $\simeq$, already used in the continuous setting, keep the same meaning as before, with the additional requirement that they also mean that the involved constants are uniform as well with respect to $\triangle x$.

Let us recall what is the Hilbert space usually denoted by $X_{1/2}$ (see, e.g., \cite{EngelNagel}).
Let $\beta$ be a real number belonging to the resolvent of $A$. Then the space $X_{1/2}$ is the image of $X$ under $(\beta\mathrm{id}_X-A)^{-1/2}$, and it is endowed with the norm $\Vert u\Vert_{X_{1/2}} = \Vert (\beta\mathrm{id}_X-A)^{1/2}u\Vert_X$. For instance, when $A^{1/2}$ is well defined, we have $X_{1/2}=D(A^{1/2})$. We set $X_{-1/2}=X_{1/2}'$, where the dual is taken with respect to the pivot space $X$.

We assume that, for every $\triangle x\in(0,{\triangle x}_0)$, there exist linear mappings $P_{\triangle x}:X_{-1/2}\rightarrow X_{\triangle x}$ and $\widetilde{P}_{\triangle x}:X_{\triangle x}\rightarrow X_{1/2}$ such that $P_{\triangle x}\widetilde{P}_{\triangle x}=\mathrm{id}_{X_{\triangle x}}$. 
We assume that the scheme is convergent, that is,
$\Vert (I-\widetilde{P}_{\triangle x}P_{\triangle x})u \Vert_X \rightarrow 0$ as $\triangle x\rightarrow 0$, for every $u\in X$.
Here, we have implicitly used the canonical injections $D(A) \hookrightarrow X_{1/2} \hookrightarrow X \hookrightarrow X_{-1/2}$ (see \cite{EngelNagel}).
Additionally, we assume that $P_{\triangle x}=\widetilde{P}_{\triangle x}^*$. 
Note that, at this step, these assumptions are very general and hold for most of numerical schemes.

For every $\triangle x\in(0,{\triangle x}_0)$, the vector space $X_{\triangle x}$ is endowed with the Euclidean norm $\Vert\ \Vert_{\triangle x}$ defined by $\Vert u_{\triangle x}\Vert_{\triangle x} = \Vert \widetilde{P}_{\triangle x} u_{\triangle x}\Vert_X$, for $u_{\triangle x}\in X_{\triangle x}$. The corresponding scalar product is denoted by $\langle\cdot,\cdot\rangle_{\triangle x}$.
Note that $\Vert \widetilde{P}_{\triangle x}\Vert_{L(X_{\triangle x},X)}=1$ and that, by the Uniform Boundedness Principle, $\Vert P_{\triangle x}\Vert_{L(X,X_{\triangle x})} \lesssim 1$.

For every $\triangle x\in(0,{\triangle x}_0)$, we define the approximation operators $A_{\triangle x}:X_{\triangle x}\rightarrow X_{\triangle x}$ of $A$, and $B_{\triangle x}:X_{\triangle x}\rightarrow X_{\triangle x}$, by $A_{\triangle x}=P_{\triangle x}A\widetilde{P}_{\triangle x}$ (where, in this formula, we have implicitly used the canonical extension of the operator $A:X_{1/2}\rightarrow X_{-1/2}$) and $B_{\triangle x}=P_{\triangle x}B\widetilde{P}_{\triangle x}$. Since $P_{\triangle x}=\widetilde{P}_{\triangle x}^*$, note that $A_{\triangle x}$ is identified with a skew-symmetric matrix, and $B_{\triangle x}$ is identified with a symmetric nonnegative matrix.\footnote{We have $\langle B_{\triangle x}u_{\triangle x},u_{\triangle x}\rangle_{\triangle x} = \langle P_{\triangle x} B \tilde P_{\triangle x} u_{\triangle x},u_{\triangle x}\rangle_{\triangle x} = \langle B \tilde P_{\triangle x} u_{\triangle x}, \tilde P_{\triangle x}u_{\triangle x}\rangle_{X}\geq 0$.}
Finally, we define the (nonlinear) mapping $F_{\triangle x}:X_{\triangle x}\rightarrow X_{\triangle x}$ by $F_{\triangle x}(u_{\triangle x}) = P_{\triangle x} F(\widetilde{P}_{\triangle x} u_{\triangle x})$, for every $u_{\triangle x}\in X_{\triangle x}$.
Note that, by construction, $B_{\triangle x}$ is uniformly bounded with respect to $\triangle x$, and $F_{\triangle x}$ is  Lipschitz continuous on bounded subsets of $X_{\triangle x}$, uniformly with respect to $\triangle x$.

We consider the space semi-discrete approximation of \eqref{main_eq} given by
\begin{equation}\label{waveqSpaceDiscrete}
u_{\triangle x}'(t)+A_{\triangle x}u_{\triangle x}(t)+B_{\triangle x}F_{\triangle x}(u_{\triangle x}(t))+(\triangle x)^\sigma\mathcal{V}_{\triangle x} u_{\triangle x}(t)=0.
\end{equation}
The additional term $(\triangle x)^\sigma\mathcal{V}_{\triangle x}u_{\triangle x}(t)$, with $\sigma>0$, is a \emph{numerical viscosity term} whose role is crucial in order to establish decay estimates that are uniform with respect to $\triangle x$. The role and the design of such a term will be discussed further.
We only assume, throughout, that $\mathcal{V}_{\triangle x}:X_{\triangle x}\rightarrow X_{\triangle x}$ is a positive selfadjoint operator.

Defining the energy of a solution $u_{\triangle x}$ of \eqref{waveqSpaceDiscrete} by
\begin{equation}\label{defEnergyDiscrete}
E_{u_{\triangle x}}(t)=\frac{1}{2}\Vert u_{\triangle x}(t)\Vert_{\triangle x}^2 ,
\end{equation}
we have, as long as the solution is well defined,
\begin{equation}\label{decroissanceEnergyDiscrete}
E_{u_{\triangle x}}'(t) = -\langle u_{\triangle x}(t),B_{\triangle x}F_{\triangle x}(u_{\triangle x}(t))\rangle_{\triangle x} 
-(\triangle x)^\sigma\Vert (\mathcal{V}_{\triangle x})^{1/2} u_{\triangle x}(t) \Vert_{\triangle x}^2 .
\end{equation}
We are going to perform an analysis similar to the one done in Section \ref{sec:Continuous}, but in the space semi-discrete setting, with the objective of deriving sharp decay estimates for  \eqref{waveqSpaceDiscrete}, which are uniform with respect to $\triangle x$.

\subsubsection{Main result}
\paragraph{Assumptions and notations.}
First of all, we assume that
\begin{equation}\label{A1}
\langle u_{\triangle x},B_{\triangle x}F_{\triangle x}(u_{\triangle x})\rangle_{\triangle x} 
+ (\triangle x)^\sigma\Vert (\mathcal{V}_{\triangle x})^{1/2} u_{\triangle x} \Vert_{\triangle x}^2
\geq 0,
\end{equation}
for every $u_{\triangle x}\in X_{\triangle x}$. Using \eqref{decroissanceEnergyDiscrete}, this assumption ensures that the energy $E_{u_{\triangle x}}(t)$ defined by \eqref{defEnergyDiscrete} is nonincreasing.

Moreover, we assume that there exists $s_0>0$ such that
\begin{equation}\label{metz2059}
\sup_{\triangle x\in (0,s_0]}(\triangle x)^{\sigma}\Vert \mathcal{V}_{\triangle x}^{1/2}\Vert ^2_{\mathcal{L}(X_{\triangle x})}<+\infty .
\end{equation}
This assumption will be commented in Remark \ref{rk:visco}.


We keep all notations and assumptions done in Section \ref{sec:mainResConti}.
In order to discretize the mapping $\rho:L^2(\Omega,\mu)\rightarrow L^2(\Omega,\mu)$ defined by $\rho(f)=U^{-1}F(Uf)$, we use as well the approximation spaces $X_{\triangle x}$, as follows. We first map the functional setting of $X$ to $L^2(\Omega,\mu)$ by using the  isometry $U^{-1}: X \rightarrow L^2(\Omega,\mu)$. In Section \ref{sec:mainResConti}, we have defined the operator $\bar A=U^{-1}AU$ on $\mathcal{H}=L^2(\Omega,\mu)$, of domain $\mathcal{H}_1 = D(\bar A) = U^{-1}D(A)$. Accordingly, we set $\mathcal{H}_{1/2} = U^{-1} X_{1/2}$, and we define $\mathcal{H}_{-1/2}$ as the dual of $\mathcal{H}_{1/2}$ with respect to the pivot space $\mathcal{H}$. We have $\mathcal{H}_{-1/2}=U^{-1}X_{-1/2}$, where we keep the same notation $U$ to designate the canonical (isometric) extension $U:\mathcal{H}_{-1/2}\rightarrow X_{-1/2}$.
Now, for every $\triangle x\in (0,\triangle x_0)$, the linear mappings $P_{\triangle x} U:H_{-1/2}\rightarrow X_{\triangle x}$ and $U^{-1}\tilde P_{\triangle x}:X_{\triangle x}\rightarrow \mathcal{H}_{1/2}$ give a space discretization of the Hilbert space $\mathcal{H}=L^2(\Omega,\mu)$ on the finite-dimensional spaces $X_{\triangle x}$.
For every $u_{\triangle x}\in X_{\triangle x}$, we set
$$
\rho_{\triangle x}(u_{\triangle x}) = P_{\triangle x} U \rho ( U^{-1} \widetilde{P}_{\triangle x} u_{\triangle x} )
= P_{\triangle x} F( \widetilde{P}_{\triangle x} u_{\triangle x} )
= F_{\triangle x} ( u_{\triangle x} ),
$$
so that, finally, we have $\rho_{\triangle x}=F_{\triangle x}$. 

Besides, for every $f\in L^2(\Omega,\mu)$, we set
\begin{equation}\label{def_rhotilde}
\tilde\rho_{\triangle x}(f) = U^{-1}\tilde P_{\triangle x} P_{\triangle x} U \rho(f)
= U^{-1}\tilde P_{\triangle x} P_{\triangle x} F(Uf) .
\end{equation}
By definition, we have $\tilde\rho_{\triangle x}(U^{-1}\tilde P_{\triangle x} u_{\triangle x})  = U^{-1}\tilde P_{\triangle x}\, \rho_{\triangle x}(u_{\triangle x})$, for every $u_{\triangle x}\in X_{\triangle x}$.
The mapping $\tilde\rho_{\triangle x}$ is the mapping $\rho$ filtered by the ``sampling operator" $U^{-1}\tilde P_{\triangle x} P_{\triangle x}U = (P_{\triangle x}U)^* P_{\triangle x}U $. By assumption, the latter operator converges pointwise to the identity as $\triangle x\rightarrow  0$, and in many numerical schemes it corresponds to take sampled values of a given function $f$.

We have $\tilde \rho_{\triangle x}(0)=0$, but \eqref{assumptionfrhof} and \eqref{ineqg} are not necessarily satisfied, with $\rho$ replaced with $\tilde \rho_{\triangle x}$.
In the sequel, setting $f_{\triangle x} = U^{-1}\tilde P_{\triangle x} u_{\triangle x}$, we assume that 
\begin{equation}\label{assumptionfrhotildef}
f_{\triangle x} \tilde\rho_{\triangle x}(f_{\triangle x}) \geq 0, 
\end{equation}
and that
\begin{equation}\label{ineqgrhotilde}
\begin{split}
c_1\, g(\vert f_{\triangle x}(x)\vert) &\leq \vert\tilde\rho_{\triangle x}(f_{\triangle x})(x)\vert \leq c_2\, g^{-1}(\vert f_{\triangle x}(x)\vert) \quad \textrm{for a.e.}\ x\in\Omega\ \textrm{such that}\ \vert f_{\triangle x}(x)\vert\leq 1,    \\
c_1\, \vert f_{\triangle x}(x)\vert & \leq \vert\tilde\rho_{\triangle x}(f_{\triangle x})(x)\vert \leq c_2 \, \vert f_{\triangle x}(x)\vert\qquad\quad\ \textrm{for a.e.}\ x\in\Omega\ \textrm{such that}\ \vert f_{\triangle x}(x)\vert\geq 1 ,
\end{split}
\end{equation}
for every $u_{\triangle x}\in X_{\triangle x}$, for every $\triangle x\in(0,\triangle x_0)$.

Note that the additional assumptions \eqref{assumptionfrhotildef} and \eqref{ineqgrhotilde} are valid for many classical numerical schemes, such as finite differences, finite elements, and in more general, for any method based on Lagrange interpolation, in which inequalities or sign conditions are preserved under sampling. But for instance this assumption may fail for spectral methods (global polynomial approximation) in which sign conditions may not be preserved at the nodes of the scheme. The same remark applies to \eqref{A1}.

Note also that assuming \eqref{assumptionfrhotildef} and \eqref{ineqgrhotilde} is weaker than assuming \eqref{assumptionfrhof} and \eqref{ineqg} with $\rho$ replaced with $\tilde \rho_{\triangle x}$, for every $f\in L^2(\Omega,\mu)$. Indeed, the inequalities \eqref{assumptionfrhotildef} and \eqref{ineqgrhotilde} are required to hold only at the nodes of the numerical scheme.

\medskip

The main result of this section is the following.

\begin{theorem}\label{thm_space}
In addition to the above assumptions, we assume that there exist $T>0$, $\sigma>0$ and $C_T>0$ such that 
\begin{equation}\label{ineqObsDiscrete}
C_T E_{\phi_{\triangle x}}(0)\leq \int_0^T\left(\Vert B_{\triangle x}^{1/2}\phi_{\triangle x}(t)\Vert_{\triangle x}^2+(\triangle x)^\sigma\Vert \mathcal{V}_{\triangle x}^{1/2}\phi_{\triangle x}(t)\Vert_{\triangle x}^{2}\right)\, dt,
\end{equation}
for every solution of $\phi_{\triangle x}'(t)+A_{\triangle x}\phi_{\triangle x}(t)=0$ (uniform observability inequality with viscosity for the space semi-discretized linear conservative equation).

Then, the solutions of \eqref{waveqSpaceDiscrete}, with values in $X_{\triangle x}$, are well defined on $[0,+\infty)$, and the energy of any solution satisfies
$$
E_{u_{\triangle x}}(t) \lesssim T \max(\gamma_1, E_{u_{\triangle x}}(0)) \, L\left( \frac{1}{\psi^{-1}( \gamma_2 t )} \right) ,
$$
for every $t> 0$ and $\triangle x\in (0,s_0]$, with $\gamma_1 \simeq \Vert B\Vert /\gamma_2$ and $\gamma_2 \simeq C_T/ T( T^2\Vert B^{1/2}\Vert^4+1)$.
Moreover, under \eqref{condlimsup}, we have the simplified decay rate
$$
E_{u_{\triangle x}}(t) \lesssim T \max(\gamma_1, E_{u_{\triangle x}}(0)) \, (H')^{-1}\left(\frac{\gamma_3}{t}\right),
$$
for every $t> 0$ and $\triangle x\in (0,s_0]$, for some positive constant $\gamma_3\simeq 1$.
\end{theorem}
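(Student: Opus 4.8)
The plan is to reproduce, in the semi-discrete setting, the four-step scheme used to prove Theorem \ref{thm_continuous}, the only genuinely new feature being the systematic bookkeeping of the viscosity term $(\triangle x)^\sigma\mathcal{V}_{\triangle x}u_{\triangle x}$ and the verification that every constant produced along the way is uniform in $\triangle x$. Throughout, it is natural to work with the combined observation quantity $O_{\triangle x}(v)=\Vert B_{\triangle x}^{1/2}v\Vert_{\triangle x}^2+(\triangle x)^\sigma\Vert\mathcal{V}_{\triangle x}^{1/2}v\Vert_{\triangle x}^2$, which is precisely the integrand in the right-hand side of the uniform observability inequality \eqref{ineqObsDiscrete}. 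Global well-posedness on $[0,+\infty)$ follows, as in the continuous case, from the fact that \eqref{A1} makes the energy \eqref{defEnergyDiscrete} nonincreasing along \eqref{decroissanceEnergyDiscrete}, so that solutions remain bounded.

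First I would establish the discrete analogue of Lemma \ref{cont_lem1}, comparing \eqref{waveqSpaceDiscrete} with the linear damped model $z_{\triangle x}'+A_{\triangle x}z_{\triangle x}+B_{\triangle x}z_{\triangle x}+(\triangle x)^\sigma\mathcal{V}_{\triangle x}z_{\triangle x}=0$ with the same initial datum. Setting $\psi_{\triangle x}=u_{\triangle x}-z_{\triangle x}$, taking the scalar product of its equation with $\psi_{\triangle x}$, using skew-symmetry of $A_{\triangle x}$, the sign condition $\langle u_{\triangle x},B_{\triangle x}F_{\triangle x}(u_{\triangle x})\rangle_{\triangle x}\geq 0$ (which, as seen below, reduces to \eqref{assumptionfrhotildef}), and the same Young inequality as in Lemma \ref{cont_lem1}, one obtains after integration
\begin{equation*}
\int_0^T O_{\triangle x}(z_{\triangle x})\,dt\lesssim \int_0^T\left(\Vert B_{\triangle x}^{1/2}u_{\triangle x}\Vert_{\triangle x}^2+\Vert B_{\triangle x}^{1/2}F_{\triangle x}(u_{\triangle x})\Vert_{\triangle x}^2\right)dt+\int_0^T(\triangle x)^\sigma\Vert\mathcal{V}_{\triangle x}^{1/2}u_{\triangle x}\Vert_{\triangle x}^2\,dt,
\end{equation*}
the viscous contribution being handled by $(\triangle x)^\sigma\Vert\mathcal{V}_{\triangle x}^{1/2}z_{\triangle x}\Vert_{\triangle x}^2\lesssim(\triangle x)^\sigma\Vert\mathcal{V}_{\triangle x}^{1/2}u_{\triangle x}\Vert_{\triangle x}^2+(\triangle x)^\sigma\Vert\mathcal{V}_{\triangle x}^{1/2}\psi_{\triangle x}\Vert_{\triangle x}^2$ and absorbing the last, nonnegative term on the left. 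Next I would prove the analogue of Lemma \ref{cont_lem2}, comparing $z_{\triangle x}$ with the conservative solution $\phi_{\triangle x}$ of $\phi_{\triangle x}'+A_{\triangle x}\phi_{\triangle x}=0$ with $\phi_{\triangle x}(0)=z_{\triangle x}(0)$. Writing $\theta_{\triangle x}=\phi_{\triangle x}-z_{\triangle x}$, integrating $E_{\theta_{\triangle x}}'=\langle B_{\triangle x}z_{\triangle x}+(\triangle x)^\sigma\mathcal{V}_{\triangle x}z_{\triangle x},\theta_{\triangle x}\rangle_{\triangle x}$ twice and applying Young, the new viscous cross term is controlled by writing $(\triangle x)^\sigma\langle\mathcal{V}_{\triangle x}z_{\triangle x},\theta_{\triangle x}\rangle_{\triangle x}\leq((\triangle x)^\sigma\Vert\mathcal{V}_{\triangle x}^{1/2}\Vert^2)^{1/2}((\triangle x)^\sigma\Vert\mathcal{V}_{\triangle x}^{1/2}z_{\triangle x}\Vert_{\triangle x}^2)^{1/2}\Vert\theta_{\triangle x}\Vert_{\triangle x}$ and invoking the uniform bound from \eqref{metz2059}. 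This yields $\int_0^T O_{\triangle x}(\phi_{\triangle x})\,dt\leq k_T\int_0^T O_{\triangle x}(z_{\triangle x})\,dt$ with $k_T$ uniform in $\triangle x$, of the same order $T^2\Vert B^{1/2}\Vert^4+1$ as in Lemma \ref{cont_lem2}; this is exactly the point where assumption \eqref{metz2059} is indispensable.

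The third step is the nonlinear energy estimate, the discrete counterpart of Lemma \ref{cont_lem3}. Here I would pass to the isometric model on $L^2(\Omega,\mu)$: with $f_{\triangle x}=U^{-1}\widetilde P_{\triangle x}u_{\triangle x}$, using $B_{\triangle x}=P_{\triangle x}B\widetilde P_{\triangle x}$, $P_{\triangle x}=\widetilde P_{\triangle x}^*$ and the definition \eqref{def_rhotilde} of $\tilde\rho_{\triangle x}$, a direct computation gives $\Vert B_{\triangle x}^{1/2}u_{\triangle x}\Vert_{\triangle x}^2=\int_\Omega b f_{\triangle x}^2\,d\mu$, $\Vert B_{\triangle x}^{1/2}F_{\triangle x}(u_{\triangle x})\Vert_{\triangle x}^2=\int_\Omega b\,\tilde\rho_{\triangle x}(f_{\triangle x})^2\,d\mu$ and $\langle u_{\triangle x},B_{\triangle x}F_{\triangle x}(u_{\triangle x})\rangle_{\triangle x}=\int_\Omega b\, f_{\triangle x}\tilde\rho_{\triangle x}(f_{\triangle x})\,d\mu$. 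The last identity also shows that the sign condition used in the first step is nothing but \eqref{assumptionfrhotildef}. These identities reduce the estimate to exactly the form treated in Lemma \ref{cont_lem3}, with $\rho$ replaced by $\tilde\rho_{\triangle x}$; since \eqref{assumptionfrhotildef} and \eqref{ineqgrhotilde} provide for $\tilde\rho_{\triangle x}$ the same sign condition and two-sided $g$-comparison that \eqref{assumptionfrhof} and \eqref{ineqg} provided for $\rho$, the Jensen-plus-Young argument carries over verbatim, with $w$ defined by \eqref{def_w} and the bound $\int_\Omega b\,d\mu\leq\Vert B\Vert$ still holding independently of $\triangle x$.

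The last step assembles the pieces. Chaining \eqref{ineqObsDiscrete}, the discrete Lemma \ref{cont_lem2} and the discrete Lemma \ref{cont_lem1}, then multiplying by $w(E_{\phi_{\triangle x}}(0))$ and inserting the third-step estimate, I would reach an inequality of the same shape as \eqref{train9h19}, the only extra contribution being $\lesssim k_T\,w(E_{\phi_{\triangle x}}(0))\int_0^T(\triangle x)^\sigma\Vert\mathcal{V}_{\triangle x}^{1/2}u_{\triangle x}\Vert_{\triangle x}^2\,dt$, coming from the viscous part of $O_{\triangle x}(z_{\triangle x})$ in the first step. Since $w(E_{\phi_{\triangle x}}(0))\lesssim 1$, this term together with the nonlinear dissipation combine, via \eqref{decroissanceEnergyDiscrete}, into $\lesssim k_T(w(E_{\phi_{\triangle x}}(0))+1)(E_{u_{\triangle x}}(0)-E_{u_{\triangle x}}(T))$, i.e. into the total energy loss. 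One then chooses $\beta$ large enough, uniformly in $\triangle x$ because $C_T$, $k_T$ and $\Vert B\Vert$ are all uniform, to absorb the $\widehat H^*$-term exactly as in Lemma \ref{cont_lem4} and obtain the one-step inequality $E_{u_{\triangle x}}(T)\leq E_{u_{\triangle x}}(0)\big(1-\rho_T L^{-1}(E_{u_{\triangle x}}(0)/\beta)\big)$ with $\rho_T$ uniform. Iterating over the intervals $[kT,(k+1)T]$ and running the same ODE-comparison argument as in the continuous proof delivers the announced decay rate, and the simplified rate under \eqref{condlimsup}. The main obstacle is not any single estimate but the uniform bookkeeping of the viscous terms: they must be discarded as nonnegative in the first comparison, controlled through \eqref{metz2059} in the second, and finally reincorporated into the dissipation at the assembly stage, all without producing any constant that degenerates as $\triangle x\to 0$.
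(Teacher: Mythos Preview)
Your proposal is correct and follows essentially the same four-step architecture as the paper's proof (Lemmas~\ref{lemma1}--\ref{intermediateTheo}). The only noteworthy difference is that in the first comparison step you invoke the sign condition $\langle u_{\triangle x},B_{\triangle x}F_{\triangle x}(u_{\triangle x})\rangle_{\triangle x}\geq 0$, which you correctly derive from \eqref{assumptionfrhotildef}, whereas the paper works directly with the weaker assumption \eqref{A1}; this produces a slightly cleaner constant but is otherwise immaterial.
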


\begin{remark}[Comments on the uniform space semi-discrete observability inequality \eqref{ineqObsDiscrete}]\label{rk:commentsdiscobsineq}
The main assumption above is the uniform observability inequality \eqref{ineqObsDiscrete}, which is not easy to obtain in general. There are not so many general results in the existing literature, providing such uniform estimates.

\medskip

First of all, in the absence of a viscosity term, the observability inequality \eqref{ineqObsDiscrete} fails to be uniform in general (see \cite{ZuazuaSIREV} and references therein), in the sense that the largest constant appearing at the left-hand side of \eqref{ineqObsDiscrete}, depending on $\triangle x$ in general, tends to $0$ as $\triangle x\rightarrow 0$. This phenomenon, which is by now well known, is due to the fact that the discretization creates spurious highfrequency oscillations that cause a vanishing speed of highfrequency wave packets. We refer to \cite{ZuazuaSIREV} for the detailed description of this lack of uniformity, in particular for 1D wave equations with boundary observation (for which a simple computation yields non-uniformity).

A counterexample to uniformity is provided in \cite{MaricaZuazua} for the 1D Dirichlet wave equation with internal observation (over a subset $\omega$), semi-discretized in space by finite differences: it is proved that, for every solution of the conservative system
$\phi_{\triangle x}''(t) - \triangle_{\triangle x}\phi_{\triangle x}(t) = 0 $, 
there holds
$
C_{\triangle x} E_{\phi_{\triangle x}}(0) \leq  \int_0^T \chi_\omega \Vert \phi_{\triangle x}'(t)\Vert_{\triangle x}^2 \, dt ,
$
where the largest positive constant $C_{\triangle x}$ for which this inequality is valid satisfies $C_{\triangle x}\rightarrow 0$ as $\triangle x\rightarrow 0$.
The proof of this lack of uniformity combines the following facts: using gaussian beams, it is shown that, along every bicharacteristic ray, there exists a solution of the wave equation whose energy is localized along this ray; the velocity of highfrequency wave packets for the discrete model tends to $0$ as $\triangle x$ tends to $0$. Then, for every $T> 0$, for $\triangle x > 0$ small enough, there exist initial data whose corresponding solution is concentrated along a ray that does not reach the observed region $\omega$ within time $T$.

\medskip

Note that the uniform observability inequality \eqref{ineqObsDiscrete} holds for all solutions of the linear conservative equation $\phi_{\triangle x}'(t)+A_{\triangle x}\phi_{\triangle x}(t)=0$, if and only if
the solutions of $z_{\triangle x}'(t)+A_{\triangle x}z_{\triangle x}(t)+B_{\triangle x} z_{\triangle x}(t)=0$ are exponentially decaying, with a uniform exponential rate (see \cite{EZ1}, see also Lemma \ref{lemma2} and the end of Section \ref{proofTheoDiscrSpace}, from which this claim follows by an easy adaptation).

\medskip

Many possible remedies to the lack of uniformity have been proposed in \cite{ZuazuaSIREV}, among which the filtering of highfrequencies, the use of multigrids, or the use of appropriate viscosity terms.
Here, since we are in a nonlinear context, we focus on the use of viscosity, that we find more appropriate.\footnote{Note that, in the linear context, filtering and adding a viscosity term are equivalent, as it follows from \cite[Corollary 3.8 and Remark 3.9]{EZ1}.}
The role of the numerical viscosity term (which vanishes as the mesh size tends to zero) is to damp out the highfrequency numerical spurious oscillations that appear in the semi-discrete setting.

\medskip

A typical choice of the viscosity operator, proposed in \cite{EZ1}, is
$$
\mathcal{V}_{\triangle x} = \sqrt{A_{\triangle x}^*A_{\triangle x}},
$$ 
(symmetric positive definite square root of $A_{\triangle x}$; if it is not positive definite, add some $\varepsilon\,\mathrm{id}_{X_{\triangle x}}$), with $\sigma$ chosen such that \eqref{A1} 
is satisfied. Some variants of the viscosity term are possible.
Unfortunately, it is not proved in the existing literature that such a general viscosity term is systematically sufficient in order to recover the desired uniform properties (in contrast to time discretizations, see further).
As discussed in \cite{EZ1}, the main difficulty consists in establishing the uniform observability inequality \eqref{ineqObsDiscrete} with viscosity (see \cite{ZuazuaSIREV} for a thorough discussion on this issue).

There are quite few results in the literature where one can find such uniform stability results, for some particular classes of equations.

One of them concerns the wave equation, studied in 1D (and in a 2D square) in \cite{tebou_zuazua}, with an internal damping (see also the generalization to any regular 2D domain in \cite{MunchPazoto}), semi-discretized in space by finite differences. Using discrete multipliers, the authors prove that the solutions of the semi-discretized locally damped wave equation with viscosity
$$
y_{\triangle x}''(t)-\triangle_{\triangle x} y_{\triangle x}(t) + a_{\triangle x} y_{\triangle x}'(t) - (\triangle x)^2\triangle_{\triangle x} y_{\triangle x}'(t) = 0  ,
$$
decay exponentially to $0$, uniformly with respect to $\triangle x$. Here, the viscosity term is $- (\triangle x)^2\triangle_{\triangle x} y_{\triangle x}'$, and the term $a_{\triangle x}$ stands for the discretization of a localized damping.
With our notations, we have
$$
u_{\triangle x}(t) = \begin{pmatrix}
y_{\triangle x}(t)\\
y_{\triangle x}'(t)
\end{pmatrix},
\quad
A_{\triangle x} = \begin{pmatrix}
0 & -I_N\\
-\triangle_{\triangle x} & 0 
\end{pmatrix},
\quad
B_{\triangle x} = \begin{pmatrix}
0 & 0\\
0 & a_{\triangle x} 
\end{pmatrix},
\quad
\mathcal{V}_{\triangle x} = \begin{pmatrix}
0 & 0\\
0 & -\triangle_{\triangle x} 
\end{pmatrix} ,
$$
with $\sigma=2$, and with $\triangle_{\triangle x}$, the usual finite-difference discretization of the Laplacian, given for instance in 1D by
$$
\triangle_{\triangle x} = \frac{1}{(\triangle x)^2}\begin{pmatrix}
-2 & 1 & 0 & \hdots & 0\\
1 & \ddots & \ddots & \ddots & \vdots\\
0 & \ddots & \ddots & \ddots & 0\\
\vdots & \ddots & \ddots & \ddots & 1\\
0 & \hdots & 0 & 1 & -2
\end{pmatrix} .
$$
In that case, the viscosity operator is not positive definite, however the uniform observability inequality \eqref{ineqObsDiscrete} follows from the proof of \cite[Theorem 1.1]{EZ1}.

In \cite[Theorem 7.1]{Ervedoza_NM2009}, the author studies a class of second-order equations, with a self-adjoint positive operator, semi-discretized in space by means of finite elements on general meshes, not necessarily regular. This result is generalized in \cite{Miller} with a weaker numerical viscosity term. We refer also to \cite{ANVE} for a similar study under appropriate spectral gap conditions, noting that, in that paper, results are also provided on uniform polynomial stability.

Finally, in \cite{RTT_JMPA2006} and \cite{RTT_COCV2007}, the authors use a similar viscosity operator for the plate equation and even for a more general class of second-order evolution equations (under appropriate spectral gap assumptions, and thus essentially in 1D). They do not explicitly prove a uniform observability inequality but they establish a close result based on a frequential characterization of the dissipation introduced in \cite{Liu-Zheng}.
\end{remark}

\begin{remark}[Comments on the assumption \eqref{metz2059} on the numerical viscosity.]\label{rk:visco}
The assumption \eqref{metz2059} is satisfied in all cases mentioned in Remark \ref{rk:commentsdiscobsineq} (see the corresponding references): in \cite{tebou_zuazua}, some explanations about the choice of the viscosity operator and its properties (including \eqref{metz2059}) are given after Theorem 1.1; in \cite{EZ1}, \eqref{metz2059} corresponds to the third assumption on the viscosity operator in Theorem 3.7; in \cite{MunchPazoto}, a finite difference approximation scheme is used,  with $\sigma=2$ in the viscosity (it is easy to see that \eqref{metz2059} is indeed satisfied); finally, in \cite{RTT_JMPA2006,RTT_COCV2007}, this assumption follows from their particular choice of the numerical viscosity operator. 
\end{remark}

\subsubsection{Proof of Theorem \ref{thm_space}}\label{proofTheoDiscrSpace}
We follow the lines of the proof of Theorem \ref{thm_continuous}. The only difference is in handling the viscosity term. Hereafter, we provide the main steps and we only give details when there are some differences with respect to the continuous case.

Note that the global well-posedness of the solutions follows from usual a priori arguments (using energy estimates), as in the continuous setting. Uniqueness follows as well from the assumption that $F$ is locally Lipschitz on bounded sets.

\paragraph{First step.} \textit{Comparison of the nonlinear equation \eqref{waveqSpaceDiscrete} with the linear damped model.} 

We first compare the nonlinear equation \eqref{waveqSpaceDiscrete} with the linear damped system
\begin{equation}\label{waveqSpaceLin}
z_{\triangle x}'(t)+A_{\triangle x}z_{\triangle x}(t)+B_{\triangle x} z_{\triangle x}(t)+(\triangle x)^\sigma\mathcal{V}_{\triangle x}z_{\triangle x}(t)=0. 
\end{equation}

\begin{lemma}\label{lemma1}
For every solution $u_{\triangle x}(\cdot)$ of \eqref{waveqSpaceDiscrete}, the solution $z_{\triangle x}(\cdot)$ of \eqref{waveqSpaceLin} such that $z_{\triangle x}(0)=u_{\triangle x}(0)$ satisfies
\begin{multline*}
\int_0^T \left(\Vert B_{\triangle x}^{1/2} z_{\triangle x}(t)\Vert_{\triangle x}^2+(\triangle x)^\sigma\Vert \mathcal{V}_{\triangle x}^{1/2}z_{\triangle x}(t)\Vert_{\triangle x}^2\right)\, dt \\
\leq 2\int_0^T\left(\Vert B_{\triangle x}^{1/2} u_{\triangle x}(t)\Vert_{\triangle x}^{2}+\Vert B_{\triangle x}^{1/2}F_{\triangle x}(u_{\triangle x}(t))\Vert_{\triangle x}^{2}+2(\triangle x)^\sigma\Vert \mathcal{V}_{\triangle x}^{1/2}u_{\triangle x}(t)\Vert_{\triangle x}^2\right)\, dt.
\end{multline*}
\end{lemma}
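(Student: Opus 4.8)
The plan is to replay the proof of Lemma \ref{cont_lem1} almost verbatim, the only genuine novelty being the bookkeeping of the viscosity term. I would introduce the error $\psi_{\triangle x}=u_{\triangle x}-z_{\triangle x}$. Subtracting \eqref{waveqSpaceLin} from \eqref{waveqSpaceDiscrete}, and noting that the two viscosity contributions combine into $(\triangle x)^\sigma\mathcal{V}_{\triangle x}(u_{\triangle x}-z_{\triangle x})=(\triangle x)^\sigma\mathcal{V}_{\triangle x}\psi_{\triangle x}$, gives
$$
\psi_{\triangle x}'(t)+A_{\triangle x}\psi_{\triangle x}(t)+B_{\triangle x}F_{\triangle x}(u_{\triangle x}(t))-B_{\triangle x}z_{\triangle x}(t)+(\triangle x)^\sigma\mathcal{V}_{\triangle x}\psi_{\triangle x}(t)=0.
$$
Taking the scalar product with $\psi_{\triangle x}$, using that $A_{\triangle x}$ is skew-symmetric (so $\langle A_{\triangle x}\psi_{\triangle x},\psi_{\triangle x}\rangle_{\triangle x}=0$) and that $\mathcal{V}_{\triangle x}$ is positive selfadjoint, and expanding the $B_{\triangle x}$ terms exactly as in the continuous case, I would obtain, with $E_{\psi_{\triangle x}}=\frac12\Vert\psi_{\triangle x}\Vert_{\triangle x}^2$,
\begin{multline*}
E_{\psi_{\triangle x}}'+\Vert B_{\triangle x}^{1/2}z_{\triangle x}\Vert_{\triangle x}^2+(\triangle x)^\sigma\Vert\mathcal{V}_{\triangle x}^{1/2}\psi_{\triangle x}\Vert_{\triangle x}^2\\
=-\langle u_{\triangle x},B_{\triangle x}F_{\triangle x}(u_{\triangle x})\rangle_{\triangle x}+\langle B_{\triangle x}^{1/2}F_{\triangle x}(u_{\triangle x}),B_{\triangle x}^{1/2}z_{\triangle x}\rangle_{\triangle x}+\langle B_{\triangle x}^{1/2}u_{\triangle x},B_{\triangle x}^{1/2}z_{\triangle x}\rangle_{\triangle x}.
\end{multline*}

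The first place where the discrete setting departs from the continuous one is the sign term $-\langle u_{\triangle x},B_{\triangle x}F_{\triangle x}(u_{\triangle x})\rangle_{\triangle x}$. In the continuous proof this was nonpositive by \eqref{ineqBF} and simply dropped; here only the bundled assumption \eqref{A1} is at hand, so I would instead use it in the form $-\langle u_{\triangle x},B_{\triangle x}F_{\triangle x}(u_{\triangle x})\rangle_{\triangle x}\leq(\triangle x)^\sigma\Vert\mathcal{V}_{\triangle x}^{1/2}u_{\triangle x}\Vert_{\triangle x}^2$. Applying the Young inequality with $\theta=\frac12$ to the two $B_{\triangle x}$ cross terms, exactly as in Lemma \ref{cont_lem1}, absorbs $\frac12\Vert B_{\triangle x}^{1/2}z_{\triangle x}\Vert_{\triangle x}^2$ into the left-hand side and yields
\begin{multline*}
E_{\psi_{\triangle x}}'+\tfrac12\Vert B_{\triangle x}^{1/2}z_{\triangle x}\Vert_{\triangle x}^2+(\triangle x)^\sigma\Vert\mathcal{V}_{\triangle x}^{1/2}\psi_{\triangle x}\Vert_{\triangle x}^2\\
\leq\Vert B_{\triangle x}^{1/2}F_{\triangle x}(u_{\triangle x})\Vert_{\triangle x}^2+\Vert B_{\triangle x}^{1/2}u_{\triangle x}\Vert_{\triangle x}^2+(\triangle x)^\sigma\Vert\mathcal{V}_{\triangle x}^{1/2}u_{\triangle x}\Vert_{\triangle x}^2.
\end{multline*}
Integrating on $[0,T]$ and using $E_{\psi_{\triangle x}}(0)=0$ (since $z_{\triangle x}(0)=u_{\triangle x}(0)$) together with $E_{\psi_{\triangle x}}(T)\geq 0$ then removes the boundary contributions.

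The remaining difficulty, and the step I expect to be the real crux, is that the left-hand side carries $\Vert\mathcal{V}_{\triangle x}^{1/2}\psi_{\triangle x}\Vert_{\triangle x}^2$ rather than the desired $\Vert\mathcal{V}_{\triangle x}^{1/2}z_{\triangle x}\Vert_{\triangle x}^2$: the dissipation has been routed through the error $\psi_{\triangle x}$ and must be traded back. I would do this with $z_{\triangle x}=u_{\triangle x}-\psi_{\triangle x}$ and the elementary bound $\Vert\mathcal{V}_{\triangle x}^{1/2}z_{\triangle x}\Vert_{\triangle x}^2\leq 2\Vert\mathcal{V}_{\triangle x}^{1/2}u_{\triangle x}\Vert_{\triangle x}^2+2\Vert\mathcal{V}_{\triangle x}^{1/2}\psi_{\triangle x}\Vert_{\triangle x}^2$, i.e.\ $\Vert\mathcal{V}_{\triangle x}^{1/2}\psi_{\triangle x}\Vert_{\triangle x}^2\geq\frac12\Vert\mathcal{V}_{\triangle x}^{1/2}z_{\triangle x}\Vert_{\triangle x}^2-\Vert\mathcal{V}_{\triangle x}^{1/2}u_{\triangle x}\Vert_{\triangle x}^2$. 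Substituting this into the integrated inequality and multiplying by $2$ reproduces exactly the claimed estimate; in particular the coefficient $4$ in front of $(\triangle x)^\sigma\Vert\mathcal{V}_{\triangle x}^{1/2}u_{\triangle x}\Vert_{\triangle x}^2$ appears as the sum of two unit contributions, one coming from the use of \eqref{A1} and one from this triangle-inequality conversion. All the $B_{\triangle x}$-related estimates are verbatim copies of the continuous proof, so the only place demanding care is the systematic accounting of the viscosity term and the constant lost when passing from $\psi_{\triangle x}$ back to $z_{\triangle x}$.
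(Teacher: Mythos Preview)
Your proposal is correct and follows essentially the same approach as the paper's proof: introduce $\psi_{\triangle x}=u_{\triangle x}-z_{\triangle x}$, derive the energy identity, use \eqref{A1} for the sign term, apply Young's inequality to the two cross terms, integrate, and convert the viscosity contribution from $\psi_{\triangle x}$ back to $z_{\triangle x}$ via the triangle-type bound. In fact you make explicit the step $\Vert\mathcal{V}_{\triangle x}^{1/2}\psi_{\triangle x}\Vert_{\triangle x}^2\geq\frac12\Vert\mathcal{V}_{\triangle x}^{1/2}z_{\triangle x}\Vert_{\triangle x}^2-\Vert\mathcal{V}_{\triangle x}^{1/2}u_{\triangle x}\Vert_{\triangle x}^2$, which the paper performs tacitly when passing from the differential inequality to the integrated one.
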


\begin{proof}
Setting $\psi_{\triangle x}(t)=u_{\triangle x}(t)-z_{\triangle x}(t)$, we have
$$
\langle \psi_{\triangle x}'(t)+A_{\triangle x}\psi_{\triangle x}(t)+B_{\triangle x}F_{\triangle x}(u_{\triangle x}(t))-B_{\triangle x}z_{\triangle x}(t)+(\triangle x)^\sigma\mathcal{V}_{\triangle x}\psi_{\triangle x}(t) , \psi_{\triangle x}(t) \rangle_{\triangle x} = 0.
$$
Denoting $E_{\psi_{\triangle x}}(t)=\frac{1}{2}\Vert\psi_{\triangle x}(t)\Vert_{\triangle x}^{2}$, it follows that
\begin{multline*}
E_{\psi_{\triangle x}}'(t) + \Vert B_{\triangle x}^{1/2} z_{\triangle x}(t)\Vert_{\triangle x}^{2}+(\triangle x)^\sigma\Vert \mathcal{V}_{\triangle x}^{1/2}\psi_{\triangle x}\Vert_{\triangle x}^2  
=   - \langle u_{\triangle x}(t), B_{\triangle x} F_{\triangle x}(u_{\triangle x}(t))\rangle_{\triangle x}\\
+ \langle B_{\triangle x}^{1/2} F_{\triangle x}(u_{\triangle x}(t)), B_{\triangle x}^{1/2} z_{\triangle x}(t)\rangle_{\triangle x}+ \langle B_{\triangle x}^{1/2} u_{\triangle x}(t), B_{\triangle x}^{1/2} z_{\triangle x}(t)\rangle_{\triangle x}.
\end{multline*}
Using \eqref{A1}, we have $\langle u_{\triangle x}(t), B_{\triangle x} F_{\triangle x}(u_{\triangle x}(t))\rangle_{\triangle x} \geq -(\triangle x)^\sigma\Vert \mathcal{V}_{\triangle x}^{1/2}u_{\triangle x}(t)\Vert_{\triangle x}^2$, and hence
\begin{multline*}
E_{\psi_{\triangle x}}'(t) + \Vert B_{\triangle x}^{1/2} z_{\triangle x}(t)\Vert_{\triangle x}^{2}+(\triangle x)^\sigma\Vert \mathcal{V}_{\triangle x}^{1/2}\psi_{\triangle x}\Vert_{\triangle x}^2
\leq \Vert B_{\triangle x}^{1/2} F_{\triangle x}(u_{\triangle x}(t))\Vert_{\triangle x} \Vert B_{\triangle x}^{1/2} z_{\triangle x}(t)\Vert_{\triangle x} \\
+ \Vert B_{\triangle x}^{1/2} u_{\triangle x}(t)\Vert_{\triangle x} \Vert B_{\triangle x}^{1/2} z_{\triangle x}(t)\Vert_{\triangle x} + (\triangle x)^\sigma\Vert \mathcal{V}_{\triangle x}^{1/2}u_{\triangle x}(t)\Vert_{\triangle x}^2.
\end{multline*}
Thanks to the Young inequality $ab\leq\frac{a^2}{2\theta}+\theta\frac{b^2}{2}$ with $\theta=\frac{1}{2}$, we get
\begin{multline*}
E_{\psi_{\triangle x}}'(t) + \Vert B_{\triangle x}^{1/2} z_{\triangle x}(t)\Vert_{\triangle x}^{2}+(\triangle x)^\sigma\Vert \mathcal{V}_{\triangle x}^{1/2}\psi_{\triangle x}\Vert_{\triangle x}^2 \\
\leq \frac{1}{2}\Vert B_{\triangle x}^{1/2} z_{\triangle x}(t)\Vert_{\triangle x}^{2} + \Vert B_{\triangle x}^{1/2} F_{\triangle x}(u_{\triangle x}(t))\Vert_{\triangle x}^{2} + \Vert B_{\triangle x}^{1/2} u_{\triangle x}(t)\Vert_{\triangle x}^{2} + (\triangle x)^\sigma\Vert \mathcal{V}_{\triangle x}^{1/2}u_{\triangle x}(t)\Vert_{\triangle x}^2,
\end{multline*}
and thus,
\begin{multline*}
E_{\psi_{\triangle x}}'(t) + \frac{1}{2}\Vert B_{\triangle x}^{1/2} z_{\triangle x}(t)\Vert_{\triangle x}^{2}+(\triangle x)^\sigma\Vert \mathcal{V}_{\triangle x}^{1/2}\psi_{\triangle x}\Vert_{\triangle x}^2  \\
\leq \Vert B_{\triangle x}^{1/2} F_{\triangle x}(u_{\triangle x}(t))\Vert_{\triangle x}^{2} + \Vert B_{\triangle x}^{1/2} u_{\triangle x}(t)\Vert_{\triangle x}^{2} + (\triangle x)^\sigma\Vert \mathcal{V}_{\triangle x}^{1/2}u_{\triangle x}(t)\Vert_{\triangle x}^2 .
\end{multline*}
Integrating in time, noting that $E_{\psi_{\triangle x}}(0)=0$, we infer that
\begin{multline*}
E_{\psi_{\triangle x}}(T) + \frac{1}{2}\int_0^T\left( \Vert B_{\triangle x}^{1/2} z_{\triangle x}(t)\Vert_{\triangle x}^{2}+(\triangle x)^\sigma\Vert \mathcal{V}_{\triangle x}^{1/2}z_{\triangle x}\Vert_{\triangle x}^2\right)\, dt \\
\leq \int_0^T \left( \Vert B_{\triangle x}^{1/2} F_{\triangle x}(u_{\triangle x}(t))\Vert_{\triangle x}^{2} + \Vert B_{\triangle x}^{1/2} u_{\triangle x}(t)\Vert_{\triangle x}^{2} +2(\triangle x)^\sigma\Vert \mathcal{V}_{\triangle x}^{1/2}u_{\triangle x}(t)\Vert_{\triangle x}^2\right) \, dt.
\end{multline*}
Since $E_{\psi_{\triangle x}}(T)\geq 0$, the conclusion follows.
\end{proof}

\paragraph{Second step.} \textit{Comparison of the linear damped equation with the conservative linear equation.}

We consider the space semi-discretized conservative linear system
\begin{equation}\label{conservativePhi}
\phi_{\triangle x}'(t)+A_{\triangle x}\phi_{\triangle x}(t)=0 .
\end{equation}

\begin{lemma}\label{lemma2}
Assume that \eqref{metz2059} holds true.
Then, for every solution $z_{\triangle x}(\cdot)$ of \eqref{waveqSpaceLin}, the solution $\phi_{\triangle x}$ of \eqref{conservativePhi} such that $\phi_{\triangle x}(0)=z_{\triangle x}(0)$ satisfies
\begin{multline*}
\int_0^T\left(\Vert B_{\triangle x}^{1/2}\phi_{\triangle x}(t)\Vert_{\triangle x}^{2} +(\triangle x)^\sigma\Vert \mathcal{V}_{\triangle x} ^{1/2}\phi_{\triangle x}(t)\Vert_{\triangle x}^{2}\right)\, dt
\\
\lesssim k_T \int_0^T\left(\Vert B_{\triangle x}^{1/2}z_{\triangle x}(t)\Vert_{\triangle x}^{2}+(\triangle x)^\sigma\Vert \mathcal{V}_{\triangle x} ^{1/2}z_{\triangle x}(t)\Vert_{\triangle x}^{2}\right)\, dt ,
\end{multline*}
with $k_T = 1+ T^2+ T^2 \Vert B^{1/2}\Vert^2 + T^2 \Vert B^{1/2}\Vert^4$.
\end{lemma}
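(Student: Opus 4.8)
The plan is to adapt the proof of Lemma~\ref{cont_lem2} to the semi-discrete setting; the only genuinely new point is the treatment of the viscosity contribution, which is where assumption~\eqref{metz2059} becomes decisive. First I would set $\theta_{\triangle x}(t)=\phi_{\triangle x}(t)-z_{\triangle x}(t)$. Subtracting \eqref{waveqSpaceLin} from \eqref{conservativePhi} gives
\[
\theta_{\triangle x}'(t)+A_{\triangle x}\theta_{\triangle x}(t)=B_{\triangle x}z_{\triangle x}(t)+(\triangle x)^\sigma\mathcal{V}_{\triangle x}z_{\triangle x}(t),
\]
with $\theta_{\triangle x}(0)=0$. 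Taking the scalar product with $\theta_{\triangle x}(t)$ and using that $A_{\triangle x}$ is skew-symmetric, so that $\langle A_{\triangle x}\theta_{\triangle x},\theta_{\triangle x}\rangle_{\triangle x}=0$, and writing $E_{\theta_{\triangle x}}(t)=\frac12\Vert\theta_{\triangle x}(t)\Vert_{\triangle x}^2$, I get $E_{\theta_{\triangle x}}'(t)=\langle B_{\triangle x}z_{\triangle x}(t)+(\triangle x)^\sigma\mathcal{V}_{\triangle x}z_{\triangle x}(t),\theta_{\triangle x}(t)\rangle_{\triangle x}$. Integrating once over $[0,t]$, then over $[0,T]$, and applying Fubini exactly as in Lemma~\ref{cont_lem2}, I arrive at
\[
\int_0^T E_{\theta_{\triangle x}}(t)\,dt=\int_0^T(T-t)\,\langle B_{\triangle x}z_{\triangle x}(t)+(\triangle x)^\sigma\mathcal{V}_{\triangle x}z_{\triangle x}(t),\theta_{\triangle x}(t)\rangle_{\triangle x}\,dt.
\]

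Next, abbreviating $G_{\triangle x}=B_{\triangle x}z_{\triangle x}+(\triangle x)^\sigma\mathcal{V}_{\triangle x}z_{\triangle x}$, I would bound $T-t\leq T$, apply Cauchy--Schwarz, and use the Young inequality with $\vartheta=\frac12$ to absorb the factor $\Vert\theta_{\triangle x}\Vert_{\triangle x}$, reaching $\frac14\int_0^T\Vert\theta_{\triangle x}\Vert_{\triangle x}^2\,dt\leq T^2\int_0^T\Vert G_{\triangle x}\Vert_{\triangle x}^2\,dt$. The heart of the matter is then to control $\Vert G_{\triangle x}\Vert_{\triangle x}^2\leq 2\Vert B_{\triangle x}z_{\triangle x}\Vert_{\triangle x}^2+2(\triangle x)^{2\sigma}\Vert\mathcal{V}_{\triangle x}z_{\triangle x}\Vert_{\triangle x}^2$ by the dissipation $\Vert B_{\triangle x}^{1/2}z_{\triangle x}\Vert_{\triangle x}^2+(\triangle x)^\sigma\Vert\mathcal{V}_{\triangle x}^{1/2}z_{\triangle x}\Vert_{\triangle x}^2$. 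For the damping term I use $\Vert B_{\triangle x}z_{\triangle x}\Vert_{\triangle x}^2\leq\Vert B_{\triangle x}\Vert\,\Vert B_{\triangle x}^{1/2}z_{\triangle x}\Vert_{\triangle x}^2$ together with $\Vert B_{\triangle x}\Vert\leq\Vert B\Vert=\Vert B^{1/2}\Vert^2$, which follows from $\langle B_{\triangle x}u_{\triangle x},u_{\triangle x}\rangle_{\triangle x}=\langle B\widetilde P_{\triangle x}u_{\triangle x},\widetilde P_{\triangle x}u_{\triangle x}\rangle_X\leq\Vert B\Vert\,\Vert u_{\triangle x}\Vert_{\triangle x}^2$. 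For the viscosity term I write
\[
(\triangle x)^{2\sigma}\Vert\mathcal{V}_{\triangle x}z_{\triangle x}\Vert_{\triangle x}^2\leq(\triangle x)^{2\sigma}\Vert\mathcal{V}_{\triangle x}\Vert\,\Vert\mathcal{V}_{\triangle x}^{1/2}z_{\triangle x}\Vert_{\triangle x}^2=\big((\triangle x)^\sigma\Vert\mathcal{V}_{\triangle x}^{1/2}\Vert^2\big)\,(\triangle x)^\sigma\Vert\mathcal{V}_{\triangle x}^{1/2}z_{\triangle x}\Vert_{\triangle x}^2,
\]
and invoke \eqref{metz2059} to bound the bracket uniformly in $\triangle x$, so that $(\triangle x)^{2\sigma}\Vert\mathcal{V}_{\triangle x}z_{\triangle x}\Vert_{\triangle x}^2\lesssim(\triangle x)^\sigma\Vert\mathcal{V}_{\triangle x}^{1/2}z_{\triangle x}\Vert_{\triangle x}^2$. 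This last estimate is the single place where uniformity in $\triangle x$ is at stake, and it is exactly what \eqref{metz2059} guarantees; it is the step I expect to be the main (and essentially only) obstacle relative to the continuous Lemma~\ref{cont_lem2}, everything else being a verbatim adaptation.

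Collecting these two bounds gives $\Vert G_{\triangle x}\Vert_{\triangle x}^2\lesssim\Vert B^{1/2}\Vert^2\Vert B_{\triangle x}^{1/2}z_{\triangle x}\Vert_{\triangle x}^2+(\triangle x)^\sigma\Vert\mathcal{V}_{\triangle x}^{1/2}z_{\triangle x}\Vert_{\triangle x}^2$, whence
\[
\int_0^T\Vert\theta_{\triangle x}\Vert_{\triangle x}^2\,dt\lesssim T^2\int_0^T\Big(\Vert B^{1/2}\Vert^2\Vert B_{\triangle x}^{1/2}z_{\triangle x}\Vert_{\triangle x}^2+(\triangle x)^\sigma\Vert\mathcal{V}_{\triangle x}^{1/2}z_{\triangle x}\Vert_{\triangle x}^2\Big)\,dt.
\]
To conclude I would write $\phi_{\triangle x}=\theta_{\triangle x}+z_{\triangle x}$ and use $(a+b)^2\leq 2a^2+2b^2$ to split the left-hand side of the lemma into a $z_{\triangle x}$-part, which equals $2\int_0^T(\Vert B_{\triangle x}^{1/2}z_{\triangle x}\Vert_{\triangle x}^2+(\triangle x)^\sigma\Vert\mathcal{V}_{\triangle x}^{1/2}z_{\triangle x}\Vert_{\triangle x}^2)\,dt$ and accounts for the constant $1$ in $k_T$, and a $\theta_{\triangle x}$-part. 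For the latter I bound $\Vert B_{\triangle x}^{1/2}\theta_{\triangle x}\Vert_{\triangle x}^2+(\triangle x)^\sigma\Vert\mathcal{V}_{\triangle x}^{1/2}\theta_{\triangle x}\Vert_{\triangle x}^2\lesssim(1+\Vert B^{1/2}\Vert^2)\Vert\theta_{\triangle x}\Vert_{\triangle x}^2$, using once more $\Vert B_{\triangle x}\Vert\leq\Vert B^{1/2}\Vert^2$ and $(\triangle x)^\sigma\Vert\mathcal{V}_{\triangle x}\Vert\lesssim 1$ from \eqref{metz2059}, and then substitute the displayed bound on $\int_0^T\Vert\theta_{\triangle x}\Vert_{\triangle x}^2\,dt$. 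Expanding $T^2(1+\Vert B^{1/2}\Vert^2)^2$ and collecting terms produces the announced constant $k_T=1+T^2+T^2\Vert B^{1/2}\Vert^2+T^2\Vert B^{1/2}\Vert^4$, which completes the argument.
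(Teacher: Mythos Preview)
Your proof is correct and follows essentially the same strategy as the paper: define $\theta_{\triangle x}=\phi_{\triangle x}-z_{\triangle x}$, derive the energy identity, double-integrate to produce the $(T-t)$ weight, apply Young's inequality, and conclude via $\phi_{\triangle x}=\theta_{\triangle x}+z_{\triangle x}$. The only noteworthy difference is in the treatment of the viscosity term inside Young's inequality: the paper writes $(\triangle x)^\sigma\langle\mathcal{V}_{\triangle x}z_{\triangle x},\theta_{\triangle x}\rangle_{\triangle x}=(\triangle x)^\sigma\langle\mathcal{V}_{\triangle x}^{1/2}z_{\triangle x},\mathcal{V}_{\triangle x}^{1/2}\theta_{\triangle x}\rangle_{\triangle x}$, which produces an extra term $(\triangle x)^\sigma\Vert\mathcal{V}_{\triangle x}^{1/2}\theta_{\triangle x}\Vert_{\triangle x}^2$ on the right that must then be absorbed by choosing the Young parameter $\nu=2M_{\triangle x}$ with $M_{\triangle x}=1+(\triangle x)^\sigma\Vert\mathcal{V}_{\triangle x}^{1/2}\Vert^2$; you instead keep $\Vert\theta_{\triangle x}\Vert_{\triangle x}$ whole and push the use of~\eqref{metz2059} into the bound $(\triangle x)^{2\sigma}\Vert\mathcal{V}_{\triangle x}z_{\triangle x}\Vert_{\triangle x}^2\lesssim(\triangle x)^\sigma\Vert\mathcal{V}_{\triangle x}^{1/2}z_{\triangle x}\Vert_{\triangle x}^2$. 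Both routes hinge on~\eqref{metz2059} at exactly the same conceptual point; yours is marginally cleaner since the absorption step is the standard one from Lemma~\ref{cont_lem2}.
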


\begin{proof}
Setting $\theta_{\triangle x}(t)=\phi_{\triangle x}(t)-z_{\triangle x}(t)$ and $E_{\theta_{\triangle x}}(t)=\frac{1}{2}\Vert \theta_{\triangle x}(t)\Vert_{\triangle x}^2$, we have
\begin{equation*}
\langle \theta_{\triangle x}'(t)+A_{\triangle x}\theta_{\triangle x}-B_{\triangle x}z_{\triangle x}(t)-(\triangle x)^\sigma\mathcal{V}_{\triangle x} z_{\triangle x}(t),\theta_{\triangle x}(t)\rangle_{\triangle x}=0,
\end{equation*}
and therefore
$$
E'_{\theta_{\triangle x}}(t)=\langle B_{\triangle x} z_{\triangle x}(t),\theta_{\triangle x}(t)\rangle_{\triangle x}+(\triangle x)^\sigma\langle \mathcal{V}_{\triangle x} z_{\triangle x}(t), \theta_{\triangle x}(t)\rangle_{\triangle x}.
$$
Integrating a first time over $[0,t]$ and a second time over $[0,T]$, and noting that $E_{\theta_{\triangle x}}(0)=0$, we get
\begin{equation*}
\frac{1}{2}\int_0^T \Vert \theta_{\triangle x}(t)\Vert_{\triangle x}^2\, dt = \int_0^T(T-t) \left( \langle B_{\triangle x} z_{\triangle x}(t),\theta_{\triangle x}(t)\rangle_{\triangle x}+(\triangle x)^\sigma\langle \mathcal{V}_{\triangle x} z_{\triangle x}(t), \theta_{\triangle x}(t)\rangle_{\triangle x} \right) \, dt.
\end{equation*}
Applying the Young inequality $ab\leq \frac{a^2\nu}{2}+\frac{b^2}{2\nu }$ for some $\nu>0$ to both terms at the right-hand side of the above equality, we get
\begin{multline}\label{metz1115}
\int_0^T \Vert \theta_{\triangle x}(t)\Vert_{\triangle x}^2\, dt 
\leq
T^2\nu\int_0^T\Vert B_{\triangle x}z_{\triangle x}(t)\Vert_{\triangle x}^{2}\, dt+\frac{1}{\nu}\int_{0}^T\Vert \theta_{\triangle x}(t)\Vert_{\triangle x}^{2}\, dt \\
+ T^2\nu (\triangle x)^\sigma \int_0^T\Vert \mathcal{V}_{\triangle x} ^{1/2}z_{\triangle x}(t)\Vert_{\triangle x}^{2}\, dt+ \frac{(\triangle x)^\sigma}{\nu}\int_{0}^T\Vert \mathcal{V}_{\triangle x}^{1/2}\theta_{\triangle x}(t)\Vert_{\triangle x}^{2}\, dt.
\end{multline}
Using moreover that
$$
\int_0^T \left(\Vert \theta_{\triangle x}(t)\Vert_{\triangle x}^2+(\triangle x)^\sigma\Vert \mathcal{V}_{\triangle x}^{1/2}\theta_{\triangle x}(t)\Vert_{\triangle x}^2\right)\, dt  \leq M_{\triangle x} \int_0^T \Vert \theta_{\triangle x}(t)\Vert_{\triangle x}^2\, dt,
$$
with $M_{\triangle x}=1+(\triangle x)^{\sigma}\Vert \mathcal{V}_{\triangle x}^{1/2}\Vert ^2_{\mathcal{L}(X_{\triangle x})}$, one gets for all $\nu> M_{\triangle x}$
\begin{multline*}
\left(1-\frac{M_{\triangle x}}{\nu} \right)\int_0^T \left(\Vert \theta_{\triangle x}(t)\Vert_{\triangle x}^2+(\triangle x)^\sigma\Vert \mathcal{V}_{\triangle x}^{1/2}\theta_{\triangle x}(t)\Vert_{\triangle x}^2\right)\, dt  
\\
\leq T^2\nu M_{\triangle x}\int_0^T\Vert B_{\triangle x}z_{\triangle x}(t)\Vert_{\triangle x}^{2}\, dt + T^2\nu (\triangle x)^\sigma M_{\triangle x} \int_0^T\Vert \mathcal{V}_{\triangle x} ^{1/2}z_{\triangle x}(t)\Vert_{\triangle x}^{2}\, dt .
\end{multline*}
Let us choose $\nu=2M_{\triangle x}$, the last inequality becomes
\begin{multline*}
\frac{1}{2}\int_0^T \left(\Vert \theta_{\triangle x}(t)\Vert_{\triangle x}^2+(\triangle x)^\sigma\Vert \mathcal{V}_{\triangle x}^{1/2}\theta_{\triangle x}(t)\Vert_{\triangle x}^2\right)\, dt  
\\
\leq 2T^2 M_{\triangle x}^2\left(\int_0^T\Vert B_{\triangle x}z_{\triangle x}(t)\Vert_{\triangle x}^{2}\, dt +  (\triangle x)^\sigma \int_0^T\Vert \mathcal{V}_{\triangle x} ^{1/2}z_{\triangle x}(t)\Vert_{\triangle x}^{2}\, dt\right) .
\end{multline*}
As a result, there holds
\begin{multline*}
\frac{1}{2}\int_0^T \left(\Vert \theta_{\triangle x}(t)\Vert_{\triangle x}^2+(\triangle x)^\sigma\Vert \mathcal{V}_{\triangle x}^{1/2}\theta_{\triangle x}(t)\Vert_{\triangle x}^2\right)\, dt  
\\
\lesssim T^2\Vert B^{1/2}\Vert^2 \int_0^T\Vert B_{\triangle x}^{1/2}z_{\triangle x}(t)\Vert_{\triangle x}^{2}\, dt + T^2 (\triangle x)^\sigma \int_0^T\Vert \mathcal{V}_{\triangle x} ^{1/2}z_{\triangle x}(t)\Vert_{\triangle x}^{2}\, dt ,
\end{multline*}
where, to obtain the latter inequality, we have used that $\Vert B_{\triangle x}\Vert \lesssim \Vert B\Vert$.
Writing $\phi_{\triangle x}=\theta_{\triangle x}+z_{\triangle x}$, we have
\begin{multline*}
\int_0^T\left(\Vert B_{\triangle x}^{1/2}\phi_{\triangle x}(t)\Vert_{\triangle x}^{2}\, dt+(\triangle x)^\sigma\Vert \mathcal{V}_{\triangle x} ^{1/2}\phi_{\triangle x}(t)\Vert_{\triangle x}^{2}\right)\, dt \\
\leq 2\int_0^T\left(\Vert B_{\triangle x}^{1/2}\theta_{\triangle x}(t)\Vert_{\triangle x}^{2}\, dt +(\triangle x)^\sigma\Vert \mathcal{V}_{\triangle x} ^{1/2}\theta_{\triangle x}(t)\Vert_{\triangle x}^{2}\right)\, dt \\
+2\int_0^T\left(\Vert B_{\triangle x}^{1/2}z_{\triangle x}(t)\Vert_{\triangle x}^{2}\, dt+(\triangle x)^\sigma\Vert \mathcal{V}_{\triangle x} ^{1/2}z_{\triangle x}(t)\Vert_{\triangle x}^{2}\right)\, dt,
\end{multline*}
and the lemma follows.
\end{proof}

\paragraph{Third step.} \textit{Nonlinear energy estimate.}

We define $w$ by \eqref{def_w}, as before, with $\beta>0$ to be chosen later.

\begin{lemma}\label{lemma3}
For every solution $u_{\triangle x}$ of \eqref{waveqSpaceLin}, we have
\begin{multline*} 
w(E_{\phi_{\triangle x}}(0)) \int_0^T \left(\Vert B_{\triangle x}^{1/2} u_{\triangle x}(t)\Vert_{\triangle x}^2+\Vert B_{\triangle x}^{1/2}F_{\triangle x}(u_{\triangle x}(t))\Vert_{\triangle x}^{2} + (\triangle x)^\sigma\Vert \mathcal{V}_{\triangle x}^{1/2} u_{\triangle x}(t)\Vert^2_{\triangle x}\right)\, dt  \\
\lesssim T \Vert B\Vert H^* (w(E_{\phi_{\triangle x}}(0))) \qquad\qquad\qquad\qquad\qquad\qquad\qquad\qquad\qquad\qquad\qquad\qquad\qquad \\
+ ( w(E_{\phi_{\triangle x}}(0))+1) \int_0^T\left(\langle B_{\triangle x}u_{\triangle x}(t),F_{\triangle x}(u_{\triangle x}(t))\rangle_{\triangle x}+(\triangle x)^\sigma\Vert \mathcal{V}_{\triangle x}^{1/2}u_{\triangle x}(t)\Vert_{\triangle x}^{2}\right)\, dt .
\end{multline*}
\end{lemma}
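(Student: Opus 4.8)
The plan is to retrace, almost verbatim, the argument of Lemma \ref{cont_lem3}, isolating first the viscosity contribution, which turns out to be harmless. On the left-hand side the viscous term carries the factor $w(E_{\phi_{\triangle x}}(0))$, whereas on the right-hand side it carries the larger factor $w(E_{\phi_{\triangle x}}(0))+1$; since $w\geq 0$, the pointwise inequality
$$
w(E_{\phi_{\triangle x}}(0))\,(\triangle x)^\sigma\Vert\mathcal{V}_{\triangle x}^{1/2}u_{\triangle x}(t)\Vert_{\triangle x}^2 \leq \bigl(w(E_{\phi_{\triangle x}}(0))+1\bigr)(\triangle x)^\sigma\Vert\mathcal{V}_{\triangle x}^{1/2}u_{\triangle x}(t)\Vert_{\triangle x}^2
$$
holds. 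Hence the viscous terms simply pass through after integration in $t$, and it remains only to establish the estimate for the two $B_{\triangle x}$-terms, which is the exact discrete counterpart of \eqref{est_cont_lem3}. Observe also that $w(E_{\phi_{\triangle x}}(0))$ is a constant, since $E_{\phi_{\triangle x}}(0)=E_{u_{\triangle x}}(0)$, so it factors out of all integrals.

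To treat the $B_{\triangle x}$-terms I would pass to the spectral model $L^2(\Omega,\mu)$ as in the continuous proof, the novelty being that the relevant filtered nonlinearity is $\tilde\rho_{\triangle x}$ rather than $\rho$. Setting $f_{\triangle x}=U^{-1}\widetilde P_{\triangle x}u_{\triangle x}$ and using $P_{\triangle x}=\widetilde P_{\triangle x}^*$, $B_{\triangle x}=P_{\triangle x}B\widetilde P_{\triangle x}$, $F_{\triangle x}(u_{\triangle x})=P_{\triangle x}F(\widetilde P_{\triangle x}u_{\triangle x})$ together with the definition \eqref{def_rhotilde} of $\tilde\rho_{\triangle x}$, one checks (exactly as in the footnote computation $\langle B_{\triangle x}v,w\rangle_{\triangle x}=\langle B\widetilde P_{\triangle x}v,\widetilde P_{\triangle x}w\rangle_X$) the identities
$$
\Vert B_{\triangle x}^{1/2}u_{\triangle x}\Vert_{\triangle x}^2=\int_\Omega b\,f_{\triangle x}^2\,d\mu,\qquad \Vert B_{\triangle x}^{1/2}F_{\triangle x}(u_{\triangle x})\Vert_{\triangle x}^2=\int_\Omega b\,\tilde\rho_{\triangle x}(f_{\triangle x})^2\,d\mu,
$$
and $\langle B_{\triangle x}u_{\triangle x},F_{\triangle x}(u_{\triangle x})\rangle_{\triangle x}=\int_\Omega b\,f_{\triangle x}\,\tilde\rho_{\triangle x}(f_{\triangle x})\,d\mu$. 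The key step here is that $\widetilde P_{\triangle x}F_{\triangle x}(u_{\triangle x})=\widetilde P_{\triangle x}P_{\triangle x}U\rho(f_{\triangle x})=U\tilde\rho_{\triangle x}(f_{\triangle x})$, so that the filtered operator $\tilde\rho_{\triangle x}$, and not $\rho$, is what appears. With $\int_\Omega b\,d\mu\leq\Vert B\Vert$, the $B_{\triangle x}$-estimate reduces to the isometric inequality \eqref{est_cont_lem3_isom} written for the pair $(f_{\triangle x},\tilde\rho_{\triangle x})$.

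The final step is then to run the continuous argument without change. Since the hypotheses \eqref{assumptionfrhotildef} and \eqref{ineqgrhotilde} are precisely the node-level analogues of \eqref{assumptionfrhof} and \eqref{ineqg} for $(f_{\triangle x},\tilde\rho_{\triangle x})$, the splitting of $\Omega$ into the sets $\Omega_1^t=\{\vert f_{\triangle x}\vert\leq\varepsilon_0\}$, $\Omega_2^t=\{\vert f_{\triangle x}\vert\leq\varepsilon_1\}$ and their complements, the Jensen inequality applied with the measure $b\,d\mu$ together with $H(x)=\sqrt{x}\,g(\sqrt{x})$, the monotonicity of $H$, and the Young inequality $AB\leq H(A)+H^*(B)$ all apply verbatim, yielding the desired bound.

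The only genuinely new ingredient, and hence the point requiring the most care, is the bookkeeping of the second paragraph: identifying $F_{\triangle x}(u_{\triangle x})$ with $U\tilde\rho_{\triangle x}(f_{\triangle x})$ through the sampling operator $(P_{\triangle x}U)^*P_{\triangle x}U$, and recognizing that it is exactly for this filtered object that the sign and growth conditions \eqref{assumptionfrhotildef}--\eqref{ineqgrhotilde} have been postulated at the nodes. No new analytic estimate beyond those of the continuous case is needed; once this dictionary is in place, the proof is a transcription of Lemma \ref{cont_lem3}.
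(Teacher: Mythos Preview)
Your proposal is correct and follows essentially the same approach as the paper: you isolate the viscosity term as trivially passing through (the paper simply remarks that ``the viscosity terms at the left-hand and right-hand sides of the desired inequality play no role''), pass to the spectral model via $f_{\triangle x}=U^{-1}\widetilde P_{\triangle x}u_{\triangle x}$, derive the three identities expressing the $B_{\triangle x}$-quantities as integrals against $b\,d\mu$ involving $\tilde\rho_{\triangle x}$, and then invoke the proof of Lemma~\ref{cont_lem3} with $(f_{\triangle x},\tilde\rho_{\triangle x})$ in place of $(f,\rho)$, using \eqref{assumptionfrhotildef}--\eqref{ineqgrhotilde}. Your bookkeeping of the identification $\widetilde P_{\triangle x}F_{\triangle x}(u_{\triangle x})=U\tilde\rho_{\triangle x}(f_{\triangle x})$ is exactly the computation the paper writes out.
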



\begin{proof}
We set $f_{\triangle x}(t) = U^{-1} \tilde P_{\triangle x}u_{\triangle x}(t)$.
Using the definitions of the discretized operators and the isometry $U$, we have
\begin{multline*}
\Vert B_{\triangle x}^{1/2} u_{\triangle x}\Vert_{\triangle x}^2
= \langle B_{\triangle x}u_{\triangle x}, u_{\triangle x}\rangle_{\triangle x}
= \langle B \tilde P_{\triangle x}u_{\triangle x}, \tilde P_{\triangle x} u_{\triangle x}\rangle_X \\
= \langle U^{-1}B U U^{-1} \tilde P_{\triangle x}u_{\triangle x}, U^{-1}\tilde P_{\triangle x} u_{\triangle x}\rangle_{L^2(\Omega,\mu)}
= \langle b f_{\triangle x}, f_{\triangle x}\rangle_{L^2(\Omega,\mu)}
= \int_\Omega b f_{\triangle x}^2\, d\mu ,
\end{multline*}
and, using \eqref{def_rhotilde},
\begin{equation*}
\begin{split}
\Vert B_{\triangle x}^{1/2}F_{\triangle x}(u_{\triangle x})\Vert_{\triangle x}^{2}
& = \langle B_{\triangle x}F_{\triangle x}(u_{\triangle x}), F_{\triangle x}(u_{\triangle x})\rangle_{\triangle x}
= \langle B \tilde P_{\triangle x}F_{\triangle x}(u_{\triangle x}), \tilde P_{\triangle x} F_{\triangle x}(u_{\triangle x})\rangle_X  \\
&= \langle U^{-1} B U U^{-1} \tilde P_{\triangle x} P_{\triangle x} F(\tilde P_{\triangle x}u_{\triangle x}), U^{-1}\tilde P_{\triangle x}P_{\triangle x} F(\tilde P_{\triangle x}u_{\triangle x})\rangle_{L^2(\Omega,\mu)} \\
&= \langle b U^{-1} \tilde P_{\triangle x} P_{\triangle x} F(U U^{-1} \tilde P_{\triangle x}u_{\triangle x}), U^{-1}\tilde P_{\triangle x}P_{\triangle x} F(U U^{-1} \tilde P_{\triangle x}u_{\triangle x})\rangle_{L^2(\Omega,\mu)} \\
&= \langle b \tilde\rho_{\triangle x} (f_{\triangle x}), \tilde\rho_{\triangle x} (f_{\triangle x})\rangle_{L^2(\Omega,\mu)} \\
& = \int_\Omega b \left( \tilde\rho_{\triangle x} (f_{\triangle x})\right)^2 d\mu ,
\end{split}
\end{equation*}
and, with a similar computation,
\begin{equation*}
\langle B_{\triangle x}u_{\triangle x},F_{\triangle x}(u_{\triangle x})\rangle_{\triangle x}
= \int_\Omega b f_{\triangle x} \tilde\rho_{\triangle x} (f_{\triangle x}) \, d\mu .
\end{equation*}
Then, to prove the lemma, it suffices to prove that
\begin{multline*}
\int_0^T w(E_\phi(0)) \int_\Omega \left( b f_{\triangle x}^2 + b\left( \tilde\rho_{\triangle x}(f_{\triangle x}) \right)^2 \right) d\mu\, dt  \\
\lesssim T \int_\Omega b\, d\mu \  H^*(w(E_\phi(0)))  + \left( w(E_{\phi}(0)) + 1\right) \int_0^T \int_\Omega b f_{\triangle x} \tilde\rho_{\triangle x}(f_{\triangle x})\, d\mu\, dt    .
\end{multline*}
Indeed, the viscosity terms at the left-hand and right-hand sides of the desired inequality play no role in that lemma.
Then, the proof is completely similar to the one of Lemma \ref{cont_lem3}, using in particular the assumptions \eqref{assumptionfrhotildef} and \eqref{ineqgrhotilde}. We skip it.
\end{proof}

\paragraph{Fourth step.} \textit{End of the proof.}

\begin{lemma}\label{intermediateTheo}
We have
$$
E_{u_{\triangle x}}(T)\leq E_{u_{\triangle x}}(0)\left(1-\rho_TL^{-1}\left(\frac{E_{u_{\triangle x}}(0)}{\beta}\right)\right),
$$
for some positive constant $\rho_T<1$.
\end{lemma}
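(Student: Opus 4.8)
The plan is to reproduce, in the semi-discrete setting, the chaining argument that established Lemma~\ref{cont_lem4} in the continuous case, now carrying the viscosity contributions through every step. First I would start from the uniform observability inequality \eqref{ineqObsDiscrete} applied to the conservative solution $\phi_{\triangle x}$ with $\phi_{\triangle x}(0)=z_{\triangle x}(0)=u_{\triangle x}(0)$, then successively invoke Lemma~\ref{lemma2} to pass to the linear damped solution $z_{\triangle x}$, and Lemma~\ref{lemma1} to pass to the nonlinear solution $u_{\triangle x}$. This yields
$$
C_T E_{\phi_{\triangle x}}(0) \lesssim k_T \int_0^T \Big( \Vert B_{\triangle x}^{1/2} u_{\triangle x}(t)\Vert_{\triangle x}^2 + \Vert B_{\triangle x}^{1/2} F_{\triangle x}(u_{\triangle x}(t))\Vert_{\triangle x}^2 + (\triangle x)^\sigma \Vert \mathcal{V}_{\triangle x}^{1/2} u_{\triangle x}(t)\Vert_{\triangle x}^2 \Big)\, dt,
$$
with $k_T$ as in Lemma~\ref{lemma2}, the crucial point being that $k_T$ depends only on $T$ and $\Vert B\Vert$ and not on $\triangle x$.

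Next I would multiply this inequality by the constant $w(E_{\phi_{\triangle x}}(0))$ and apply the nonlinear energy estimate of Lemma~\ref{lemma3}, obtaining
$$
C_T\, w(E_{\phi_{\triangle x}}(0))\, E_{\phi_{\triangle x}}(0) \lesssim k_T T \Vert B\Vert\, H^*(w(E_{\phi_{\triangle x}}(0))) + k_T\big(w(E_{\phi_{\triangle x}}(0))+1\big) \int_0^T \Big( \langle B_{\triangle x}u_{\triangle x}(t), F_{\triangle x}(u_{\triangle x}(t))\rangle_{\triangle x} + (\triangle x)^\sigma \Vert \mathcal{V}_{\triangle x}^{1/2} u_{\triangle x}(t)\Vert_{\triangle x}^2 \Big)\, dt.
$$
By the energy dissipation identity \eqref{decroissanceEnergyDiscrete}, the remaining time integral is exactly $E_{u_{\triangle x}}(0)-E_{u_{\triangle x}}(T)$, which is precisely where the viscosity contribution must match up.

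Then, exactly as in the continuous case, I would use the relation $\beta H^*(w(s)) = s\, w(s)$ coming from \eqref{defL} and \eqref{def_w}, and choose $\beta$ large enough so that three conditions hold simultaneously: $E_{\phi_{\triangle x}}(0) < \beta s_0^2$ (so that the analogue of \eqref{train9h20} is available), $w(E_{\phi_{\triangle x}}(0)) \le H'(s_0^2) \lesssim 1$, and $C_T - k_T T \Vert B\Vert/\beta > C_T/2$. Absorbing the $H^*$ term into the left-hand side then gives $\rho_T\, w(E_{\phi_{\triangle x}}(0))\, E_{\phi_{\triangle x}}(0) \le E_{u_{\triangle x}}(0)-E_{u_{\triangle x}}(T)$ for some $\rho_T\in(0,1)$; using $E_{\phi_{\triangle x}}(0)=E_{u_{\triangle x}}(0)$ and $w=L^{-1}(\cdot/\beta)$ yields the claimed inequality.

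The only genuine subtlety, as opposed to bookkeeping, is uniformity with respect to $\triangle x$. Every constant produced along the chain ($k_T$, the hidden constants in $\lesssim$, and ultimately $\beta$ and $\rho_T$) must be independent of the mesh size; this rests on $\Vert B_{\triangle x}\Vert \lesssim \Vert B\Vert$, on assumption \eqref{metz2059} (used inside Lemma~\ref{lemma2}), and on the fact that the viscosity terms have been tracked consistently so that \eqref{decroissanceEnergyDiscrete} exactly reproduces the right-hand integral. Since $k_T$ and $\Vert B\Vert$ are already uniform in $\triangle x$, the above choice of $\beta$, and hence $\rho_T$, can be made uniform as well. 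Granting this, the argument is a line-by-line transcription of the proof of Lemma~\ref{cont_lem4}.
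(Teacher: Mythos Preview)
Your proposal is correct and follows essentially the same approach as the paper: the same chain observability $\to$ Lemma~\ref{lemma2} $\to$ Lemma~\ref{lemma1} $\to$ Lemma~\ref{lemma3}, the identification of the resulting integral with $E_{u_{\triangle x}}(0)-E_{u_{\triangle x}}(T)$ via the dissipation identity, and then the same choice of $\beta$ as in Lemma~\ref{cont_lem4}. The paper's version is more terse (it simply writes ``We conclude similarly as in the proof of Lemma~\ref{cont_lem4}''), but your expansion of the absorption step and your explicit remark on uniformity in $\triangle x$ are accurate and match what the paper leaves implicit.
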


\begin{proof}
Recall that $E_{\phi_{\triangle x}}(0)=E_{u_{\triangle x}}(0)$ and that
\begin{equation}\label{or2126}
E_{u_{\triangle x}}(0)-E_{u_{\triangle x}}(T)=\int_0^T(\langle B_{\triangle x}u_{\triangle x}(t),F_{\triangle x}(u_{\triangle x}(t))\rangle_{\triangle x}+(\triangle x)^\sigma\Vert \mathcal{V}_{\triangle x}^{1/2}u_{\triangle x}(t)\Vert_{\triangle x}^{2})\, dt.
\end{equation}
Using successively the observability inequality \eqref{ineqObsDiscrete} and the estimates of Lemma \ref{lemma2}, of Lemma \ref{lemma1} and of Lemma \ref{lemma3}, we get
\begin{equation*}
\begin{split}
& 2C_Tw(E_{\phi_{\triangle x}}(0))E_{\phi_{\triangle x}}(0) \\
& \leq  w(E_{\phi_{\triangle x}}(0))\int_0^T\left(\Vert B_{\triangle x}^{1/2}\phi_{\triangle x}(t)\Vert_{\triangle x}^{2}+ (\triangle x)^\sigma \Vert \mathcal{V}_{\triangle x}^{1/2}\phi_{\triangle x}(t)\Vert_{\triangle x}^{2}\right)\, dt\\
 & \leq k_Tw(E_{\phi_{\triangle x}}(0))\int_0^T\left(\Vert B_{\triangle x}^{1/2}z_{\triangle x}(t)\Vert_{\triangle x}^{2}+(\triangle x)^\sigma\Vert \mathcal{V}_{\triangle x}^{1/2} z_{\triangle x}(t)\Vert_{\triangle x}^{2}\right)\, dt\\
 & \leq 2k_Tw(E_{\phi_{\triangle x}}(0))\int_0^T\left(\Vert B_{\triangle x}^{1/2}u_{\triangle x}(t)\Vert_{\triangle x}^{2}+\Vert B_{\triangle x}^{1/2}F_{\triangle x}(u_{\triangle x}(t))\Vert_{\triangle x}^2+(\triangle x)^\sigma\Vert \mathcal{V}_{\triangle x}^{1/2} u_{\triangle x}(t)\Vert_{\triangle x}^{2}\right)\, dt\\
 & \lesssim k_T T\Vert B\Vert H^* (w(E_{\phi_{\triangle x}}(0)))  \\
&\qquad + (w(E_{\phi_{\triangle x}}(0))+1)\int_0^T\left(\langle F_{\triangle x}(u_{\triangle x}(t)),B_{\triangle x}u_{\triangle x}(t)\rangle_{\triangle x}+(\triangle x)^\sigma\Vert \mathcal{V}_{\triangle x}^{1/2}u_{\triangle x}(t)\Vert_{\triangle x}^{2}\right)\, dt .
\end{split}
\end{equation*}
Using \eqref{or2126}, we infer that
$$
2C_Tw(E_{\phi_{\triangle x}}(0))E_{\phi_{\triangle x}}(0) 
\lesssim
k_T T\Vert B\Vert H^* (w(E_{\phi_{\triangle x}}(0))) + (w(E_{\phi_{\triangle x}}(0))+1) ( E_{u_{\triangle x}}(0)-E_{u_{\triangle x}}(T) ) .
$$
We conclude similarly as in the proof of Lemma \ref{cont_lem4}.
\end{proof}

The end of the proof of Theorem \ref{thm_space} follows the same lines as in the continuous case.

\subsection{Semi-discretization in time}\label{sec_semidiscretetime}
In this section, we analyze a time semi-discrete version of \eqref{main_eq}, with the objective of establishing uniform decay estimates. To this aim, following \cite{EZ1}, we add a suitable viscosity term in an implicit midpoint numerical scheme.

Given a solution $u$ of \eqref{main_eq}, for any $\triangle t>0$, we denote by $u^k$ the approximation of $u$ at time $t_k=k \triangle t$ with $k \in \N$. We consider the implicit midpoint time discretization of \eqref{main_eq} given by
\begin{equation}\label{time_dis_nl}
\left\{\begin{split}
& \frac{\widetilde{u}^{k+1} - u^k}{\triangle t} + A\left(\frac{u^k+ \widetilde{u}^{k+1}}{2}\right) + BF \left( \frac{u^k+ \widetilde{u}^{k+1}}{2}\right) = 0,\\
& \frac{  \widetilde{u}^{k+1} - u^{k+1} }{\triangle t} = \mathcal{V}_{\triangle t} u^{k+1} , \\
& u^0=u(0) .
\end{split}\right.
\end{equation}
The second equation in \eqref{time_dis_nl} is a viscosity term (see Remark \ref{rem1732} further for comments on this term and on the choice of the midpoint rule).
We only assume, throughout, that $\mathcal{V}_{\triangle t}:X\rightarrow X$ is a positive selfadjoint operator.

Written in an expansive way, \eqref{time_dis_nl} gives
\begin{equation*}
\frac{u^{k+1} - u^k}{\triangle t}  + A\left(\frac{u^k+ u^{k+1}  }{2}\right)  + BF \left( \frac{u^k+ (\mathrm{id}_X + \triangle t\, \mathcal{V}_{\triangle t}) u^{k+1} }{2}\right) 
+ \mathcal{V}_{\triangle t} u^{k+1}  + \frac{\triangle t}{2} A \mathcal{V}_{\triangle t} u^{k+1} = 0 .
\end{equation*}

We define the energy of a solution $(u^k)_{k\in\N}$ of \eqref{time_dis_nl} as the sequence $(E_{u^k})_{k\in\N}$ given by
\begin{equation}\label{defE_timediscrete}
E_{u^k} = \frac{1}{2}\Vert u^k\Vert _{X}^2 .
\end{equation}
As long as the solution is well defined, using the first equation in \eqref{time_dis_nl}, we have
\begin{equation*}\label{dissip1}
E_{\widetilde{u}^{k+1}} - E_{u^k} =  - \triangle t \left\langle B\left(\frac{u^k+ \widetilde{u}^{k+1}}{2}\right), F\left(\frac{u^k+ \widetilde{u}^{k+1}}{2}\right) \right\rangle_X ,
\end{equation*}
and using the second equation in \eqref{time_dis_nl}, we get that
\begin{equation*}\label{dissip2}
E_{\widetilde{u}^{k+1}} = E_{u^{k+1}} + \triangle t \, \Vert (\mathcal{V}_{\triangle t})^{1/2} u^{k+1}\Vert_X^2 + \frac{(\triangle t )^2}{2} \Vert \mathcal{V}_{\triangle t} u^{k+1}\Vert_X^2 .
\end{equation*}
We infer from these two relations that
\begin{multline}\label{dissip3}
E_{u^{k+1}} - E_{u^k} = - \triangle t \left\langle B\left(\frac{u^k+ \widetilde{u}^{k+1}}{2}\right), F\left(\frac{u^k+ \widetilde{u}^{k+1}}{2}\right) \right\rangle_X \\
- \triangle t \, \Vert (\mathcal{V}_{\triangle t})^{1/2} u^{k+1}\Vert_X^2 - \frac{(\triangle t )^2}{2} \Vert \mathcal{V}_{\triangle t} u^{k+1}\Vert_X^2 ,
\end{multline}
for every integer $k$.
Note that, thanks to \eqref{ineqBF}, we have $E_{u^{k+1}} - E_{u^k}\leq 0$, and therefore the energy defined by \eqref{defE_timediscrete} decays. We next perform an analysis similar to the one done in Section \ref{sec:Continuous}, but in the time semi-discrete setting, with the objective of deriving sharp decay estimates for  \eqref{waveqSpaceDiscrete}, which are uniform with respect to $\triangle t$ and $\triangle x$.

\subsubsection{Main result}
We keep all notations and assumptions done in Section \ref{sec:mainResConti}.

Hereafter, the notations $\lesssim$ and $\simeq$, already used in the previous sections, keep the same meaning as before, with the additional requirement that they also mean that the involved constants are uniform as well with respect to $\triangle t$.

The main result of this section is the following.

\begin{theorem}\label{thm_time}
In addition to the above assumptions, we assume that there exist $T>0$ and $C_T>0$ such that, setting $N=[T/\triangle t]$ (integer part), we have
\begin{multline}\label{unifobs}
C_T \Vert \phi^0\Vert_X^2 \leq \triangle t \sum_{k=0}^{N-1} \left\Vert  B^{1/2} \left( \frac{\phi^k+ \widetilde{\phi}^{k+1}}{2}\right)\right\Vert_X^2 \\
+ \triangle t \, \sum_{k=0}^{N-1} \Vert (\mathcal{V}_{\triangle t})^{1/2}\phi^{k+1}\Vert_X^2+
(\triangle t )^2 \sum_{k=0}^{N-1} \Vert \mathcal{V}_{\triangle t}\phi^{k+1}\Vert_X^2 ,
\end{multline}
for every solution of
\begin{equation}\label{time_dis_phi}
\left\{\begin{split}
& \frac{\widetilde{\phi}^{k+1} - \phi^k}{\triangle t} + A\left(\frac{\phi^k+ \widetilde{\phi}^{k+1}}{2}\right) = 0 , \\
& \frac{\widetilde{\phi}^{k+1}-\phi^{k+1}}{\triangle t} = \mathcal{V}_{\triangle t} \phi^{k+1} ,\end{split}\right.
\end{equation}
(uniform observability inequality with viscosity for the time semi-discretized linear conservative equation with viscosity).

Then, the solutions of \eqref{time_dis_nl} are well defined on $[0,+\infty)$ and, the energy of any solution satisfies
\begin{equation*}
E_{u^{k}} \lesssim  T \max(\gamma_1, E_{u^0}) \, L\left( \frac{1}{\psi^{-1}( \gamma_2 k \triangle t )} \right),
\end{equation*}
for every integer $k$, 
with $\gamma_2\simeq  C_T / T ( 1 + e^{2 T\Vert B\Vert} \max( 1, T\Vert B\Vert ) ) $
and $\gamma_1 \simeq \Vert B\Vert / \gamma_2$.
Moreover, under \eqref{condlimsup}, we have the simplified decay rate
\begin{equation*}
E_{u^{k}} \lesssim  T \max(\gamma_1, E_{u^0}) \, (H')^{-1}\left( \frac{\gamma_3}{k \triangle t } \right),
\end{equation*}
for every integer $k$, for some positive constant $\gamma_3\simeq 1$.
\end{theorem}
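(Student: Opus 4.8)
The plan is to transcribe the four-step architecture of the proofs of Theorems~\ref{thm_continuous} and \ref{thm_space} into the time semi-discrete setting, replacing integration in time by summation over $k\in\{0,\dots,N-1\}$ and propagating the two viscosity contributions (the $\triangle t\,\Vert(\mathcal{V}_{\triangle t})^{1/2}u^{k+1}\Vert_X^2$ term and the $\frac{(\triangle t)^2}{2}\Vert\mathcal{V}_{\triangle t}u^{k+1}\Vert_X^2$ term) consistently through each comparison. Before starting, I would dispatch well-posedness: the first line of \eqref{time_dis_nl} determines $\widetilde u^{k+1}$ implicitly, and since $A$ is skew-adjoint the operator $\mathrm{id}_X+\frac{\triangle t}{2}A$ is boundedly invertible, so solving for $\widetilde u^{k+1}$ reduces to a fixed-point equation for the Lipschitz map involving $BF$, whose monotonicity \eqref{ineqBF} gives existence and uniqueness; the second line then gives $u^{k+1}=(\mathrm{id}_X+\triangle t\,\mathcal{V}_{\triangle t})^{-1}\widetilde u^{k+1}$, which is well defined because $\mathcal{V}_{\triangle t}$ is positive selfadjoint. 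Together with the dissipation identity \eqref{dissip3}, which shows the energy is nonincreasing, this yields global existence for all $k$.

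For the first step I would compare \eqref{time_dis_nl} with the linear damped midpoint scheme $\frac{\widetilde z^{k+1}-z^k}{\triangle t}+A\bigl(\frac{z^k+\widetilde z^{k+1}}{2}\bigr)+B\bigl(\frac{z^k+\widetilde z^{k+1}}{2}\bigr)=0$ coupled with $\frac{\widetilde z^{k+1}-z^{k+1}}{\triangle t}=\mathcal{V}_{\triangle t}z^{k+1}$, and set $\psi^k=u^k-z^k$. Testing the discrete evolution of $\psi^k$ against $\frac{\psi^k+\widetilde\psi^{k+1}}{2}$, using the sign assumption \eqref{ineqBF} and the Young inequality with $\theta=\frac12$ exactly as in Lemma~\ref{cont_lem1}, and telescoping the energy differences (with $E_{\psi^0}=0$), I would obtain the discrete analogue bounding $\triangle t\sum$ of the $z$-quantities $\Vert B^{1/2}z\Vert_X^2+\text{(viscosity)}$ by $\triangle t\sum$ of the corresponding $u$-quantities.

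The genuinely new step is the second one, the comparison of the linear damped scheme with the conservative scheme \eqref{time_dis_phi}: setting $\theta^k=\phi^k-z^k$ and performing the discrete analogue of the double integration of Lemma~\ref{cont_lem2} (a summation by parts producing the weights $(N-k)$), one controls $\triangle t\sum\Vert\theta^k\Vert_X^2$ by the source terms $\langle B(\cdot),\theta\rangle_X$ and $\langle\mathcal{V}_{\triangle t}(\cdot),\theta\rangle_X$. I expect \emph{this} to be the main obstacle, and the origin of the factor $e^{2T\Vert B\Vert}$ in $\gamma_2$: unlike the continuous and space semi-discrete cases, where the clean double-summation trick kept the constant $k_T$ polynomial in $T\Vert B^{1/2}\Vert$, reconstructing $\theta^k$ from the midpoint-averaged sources and controlling it uniformly in $\triangle t$ appears to require a discrete Gronwall (equivalently, discrete resolvent) estimate over the $N\approx T/\triangle t$ steps, each contributing a factor $1+O(\triangle t\Vert B\Vert)$ whose product is $\approx e^{T\Vert B\Vert}$. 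Keeping every constant uniform in $\triangle t$ while tracking how the averaging $\frac{\phi^k+\widetilde\phi^{k+1}}{2}$ couples to the two viscosity corrections is the delicate point.

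The third step reproduces Lemma~\ref{lemma3} verbatim: with the weight $w(s)=L^{-1}(s/\beta)$ from \eqref{def_w}, I would pass through the isometry $U$, split $\Omega$ into $\{|f|\le\varepsilon_0\}$ and its complement, and apply Jensen's inequality with measure $b\,d\mu$ together with the Young inequality $AB\le H(A)+H^*(B)$; the viscosity sums play no role here and simply pass through both sides. In the fourth step, chaining the observability inequality \eqref{unifobs} with Steps~2, 1 and 3, using $\beta H^*(w(s))=s\,w(s)$ and choosing $\beta$ large enough so that $w(E_{\phi^0})\lesssim 1$ and the $B$-term is absorbed, yields the one-step inequality $E_{u^N}\le E_{u^0}\bigl(1-\rho_T L^{-1}(E_{u^0}/\beta)\bigr)$. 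Finally, by translation invariance over blocks of $N$ steps and the discrete comparison argument of \cite{alabau_ammari} (setting $E_j=E_{u^{jN}}/\beta$ and $M(x)=xL^{-1}(x)$), the asymptotic rate $L(1/\psi^{-1}(\gamma_2 k\triangle t))$ follows, with $\psi$ from \eqref{defpsir}, and \eqref{condlimsup} gives the simplified $(H')^{-1}$ rate.
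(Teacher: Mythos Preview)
Your overall four-step architecture matches the paper exactly, and Steps~1, 3, and 4 are essentially right. The misconception is in Step~2.

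You anticipate that the double-summation trick of Lemma~\ref{cont_lem2} breaks down in the midpoint/viscosity setting and that a discrete Gronwall argument is needed, producing the exponential $e^{2T\Vert B\Vert}$. In fact the paper's proof of the Step~2 lemma (Lemma~\ref{lemmadis_time}) shows that the double-summation \emph{does} go through, giving a polynomial constant $k_T=\max\bigl(1+(4T+1)^2\Vert B^{1/2}\Vert^4,\,2\bigr)$; no Gronwall is used. The extra bookkeeping you are worried about---that the source term involves the average $\frac{\theta^k+\widetilde\theta^{k+1}}{2}$ rather than $\theta^k$ alone---is handled by controlling $\sum_k E_{\theta^k}$ and $\sum_k E_{\widetilde\theta^{k+1}}$ separately and then adding them. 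The first sum comes from the double summation with weights $(N-1-k)$ exactly as you describe; the second is recovered from the first via the viscosity identity $E_{\widetilde\theta^{k+1}}-E_{\theta^{k+1}}=\triangle t\,\Vert(\mathcal{V}_{\triangle t})^{1/2}\theta^{k+1}\Vert_X^2+\tfrac{(\triangle t)^2}{2}\Vert\mathcal{V}_{\triangle t}\theta^{k+1}\Vert_X^2$, whose right-hand side is itself already bounded by the same source term (see \eqref{orl1318} in the paper). Combining gives $\sum_k\Vert\tfrac{\theta^k+\widetilde\theta^{k+1}}{2}\Vert_X^2\le\sum_k(E_{\theta^k}+E_{\widetilde\theta^{k+1}})$ controlled by a fixed multiple of $\sum_k\langle B(\cdot),\tfrac{\theta^k+\widetilde\theta^{k+1}}{2}\rangle_X$, and then a single Young inequality closes the loop, just as in the continuous case.

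So your proposed Gronwall route is not wrong---it would presumably work and give a valid (worse) constant---but it is not what the paper does, and your diagnosis of where the exponential in the stated $\gamma_2$ comes from is off: the proof as written produces only polynomial dependence on $T\Vert B\Vert$.
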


\begin{remark}\label{rem1732}
As in Remark \ref{rk:commentsdiscobsineq}, we insist on the crucial role of the viscosity.

In the absence of a viscosity term, the decay is not uniform in general: see for instance \cite[Theorem 5.1]{ZhangZhengZuazua} where a counterexample to uniform exponential stability (or equivalently, to the uniform observability estimate \eqref{unifobs} without viscosity) is given for a linear damped wave equation. As for space semi-discretizations, this is caused by spurious highfrequency modes that appear when discretizing in time, and that propagate with a vanishing velocity (as $\triangle t\rightarrow 0$). The role of the viscosity term is then to damp out these highfrequency spurious components. Note that other remedies to the lack of uniformity are proposed as well in \cite{EZ1,ZhangZhengZuazua}, for instance filtering the highfrequencies. As before, we focus here on the use of viscosity terms, more appropriate in our nonlinear context.

\medskip

The main assumption in Theorem \ref{thm_time} is the uniform observability inequality \eqref{unifobs}.

Certainly, the most general result, which can be directly used and adapted in our study, can be found in the remarkable article \cite{EZ1} (see also references therein, of which that paper is a far-reaching achievement), from which we infer the following typical example of a viscosity operator:
$$
\mathcal{V}_{\triangle t} = -(\triangle t)^2 A^2 = (\triangle t)^2 A^*A .
$$
For this choice, it is indeed proved in \cite[Lemma 2.4, and (1.17), (2.17) and (2.20)]{EZ1} that, if the observability inequality \eqref{observability_assumption} is valid for the continuous model, then the uniform observability inequality \eqref{unifobs} holds true for the time semi-discrete model \eqref{time_dis_phi}.
We could take as well the viscosity term $\mathcal{V}_{\triangle t} = - (\mathrm{id}_X- (\triangle t)^2 A^2)^{-1} (\triangle t)^2 A^2$, which yields as well \eqref{unifobs}, and which has the advantage of being bounded.

Note that, in contrast to space semi-discretizations (see Remark \ref{rk:commentsdiscobsineq}), here, for time semi-discretizations, the above choice of a viscosity systematically works in order to recover uniform properties.

\medskip

Apart from the viscosity term, note that the time discretization is an implicit midpoint rule. This choice is relevant for at least two reasons. The first is that an explicit time discretization would then immediately lead to a violation of the stability CFL condition, and thus the scheme is unstable. Actually, in Section \ref{sec_fullydiscrete}, we are going to consider full discretizations, and then, to be always in accordance with the CFL stability condition, it is better to choose an implicit scheme. This choice is also relevant with respect to the conservation of the energy, for the linear conservative equation \eqref{time_dis_phi}, at least, without the viscosity term.

Some variants of the midpoint rule and of the design of the viscosity term are possible (see \cite[Section 2.3]{EZ1}).
\end{remark}

\subsubsection{Proof of Theorem \ref{thm_time}}
We follow the lines of the proof of Theorem \ref{thm_continuous}.

As in the continuous setting and in the space semi-discrete setting, the global well-posedness of the solutions follows from usual a priori arguments (using energy estimates), and uniqueness follows from the assumption that $F$ is locally Lipschitz on bounded sets.

\paragraph{First step.} \textit{Comparison of the nonlinear equation \eqref{time_dis_nl} with the linear damped model.}

We first compare the nonlinear equation \eqref{time_dis_nl} with the linear damped equation with viscosity
\begin{equation}\label{time_dis_l}
\left\{\begin{split}
& \frac{\widetilde{z}^{k+1} - z^k}{\triangle t} + A\left(\frac{z^k+ \widetilde{z}^{k+1}}{2}\right) + B \left( \frac{z^k+ \widetilde{z}^{k+1}}{2}\right) = 0 ,\\
& \frac{ \widetilde{z}^{k+1} - z^{k+1} }{\triangle t} = \mathcal{V}_{\triangle t} z^{k+1} .
\end{split}\right.
\end{equation}

\begin{lemma}\label{t_dis_lem1}
For every solution $(u^k)_{k\in\N}$ of \eqref{time_dis_nl}, the solution of \eqref{time_dis_l} such that $z^0=u^0$ satisfies
\begin{multline*}
\sum_{k=0}^{N-1} \left\Vert B^{1/2} \left( \frac{z^k+ \widetilde{z}^{k+1}}{2}\right)\right\Vert_X^2 +  \sum_{k=0}^{N-1} \Vert (\mathcal{V}_{\triangle t})^{1/2} z^{k+1} \Vert_X^2 + \frac{\triangle t}{2}\sum_{k=0}^{N-1}  \Vert \mathcal{V}_{\triangle t} z^{k+1} \Vert_X^2 \\
\leq   
2  \sum_{k=0}^{N-1}\left\Vert B^{1/2} \left( \frac{u^k+ \widetilde{u}^{k+1}}{2}\right)\right\Vert_X^2
+ 2 \sum_{k=0}^{N-1} \left\Vert B^{1/2} F \left( \frac{u^k+ \widetilde{u}^{k+1}}{2}\right)\right\Vert_X^2 \\
+ 2 \sum_{k=0}^{N-1} \Vert (\mathcal{V}_{\triangle t})^{1/2} u^{k+1} \Vert_X^2 + \triangle t\, \sum_{k=0}^{N-1}  \Vert \mathcal{V}_{\triangle t} u^{k+1} \Vert_X^2  .
\end{multline*}
\end{lemma}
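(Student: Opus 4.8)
The plan is to mirror the continuous argument of Lemma \ref{cont_lem1}, replacing the time derivative by the midpoint increment and carrying the two viscosity contributions through the computation. First I would introduce the differences $\psi^k = u^k - z^k$ and $\widetilde\psi^{k+1} = \widetilde u^{k+1} - \widetilde z^{k+1}$ and subtract the two schemes \eqref{time_dis_nl} and \eqref{time_dis_l}. Writing $m_u^k = \frac{u^k+\widetilde u^{k+1}}{2}$, $m_z^k = \frac{z^k+\widetilde z^{k+1}}{2}$ and $m_\psi^k = \frac{\psi^k+\widetilde\psi^{k+1}}{2} = m_u^k - m_z^k$, the difference of the first equations reads $\frac{\widetilde\psi^{k+1}-\psi^k}{\triangle t} + A m_\psi^k + BF(m_u^k) - B m_z^k = 0$, while the difference of the viscosity equations gives $\widetilde\psi^{k+1} = (\mathrm{id}_X + \triangle t\,\mathcal{V}_{\triangle t})\psi^{k+1}$.

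The core step is to pair the first difference equation with the midpoint $m_\psi^k$ in the scalar product of $X$. Skew-adjointness of $A$ kills $\langle A m_\psi^k, m_\psi^k\rangle_X$, while the increment term becomes $\frac{1}{2\triangle t}(\Vert\widetilde\psi^{k+1}\Vert_X^2 - \Vert\psi^k\Vert_X^2)$. Expanding $m_\psi^k = m_u^k - m_z^k$ in the $B$-terms reproduces exactly the continuous identity: the ``good'' term $\langle m_u^k, BF(m_u^k)\rangle_X$, nonnegative by \eqref{ineqBF}, is discarded, and the two cross terms are handled by the Young inequality with $\theta = \frac12$ (as in Lemma \ref{cont_lem1}), absorbing half of $\Vert B^{1/2}m_z^k\Vert_X^2$. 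Substituting $\Vert\widetilde\psi^{k+1}\Vert_X^2 = \Vert\psi^{k+1}\Vert_X^2 + 2\triangle t\Vert\mathcal{V}_{\triangle t}^{1/2}\psi^{k+1}\Vert_X^2 + (\triangle t)^2\Vert\mathcal{V}_{\triangle t}\psi^{k+1}\Vert_X^2$ (the same expansion used to obtain \eqref{dissip3}) then yields, for each $k$, with $E_{\psi^k} = \frac12\Vert\psi^k\Vert_X^2$,
$$\frac{E_{\psi^{k+1}}-E_{\psi^k}}{\triangle t} + \Vert\mathcal{V}_{\triangle t}^{1/2}\psi^{k+1}\Vert_X^2 + \frac{\triangle t}{2}\Vert\mathcal{V}_{\triangle t}\psi^{k+1}\Vert_X^2 + \tfrac12\Vert B^{1/2}m_z^k\Vert_X^2 \leq \Vert B^{1/2}F(m_u^k)\Vert_X^2 + \Vert B^{1/2}m_u^k\Vert_X^2.$$

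Summing over $k=0,\dots,N-1$, the energy increments telescope; since $z^0 = u^0$ we have $E_{\psi^0}=0$, and the surviving $E_{\psi^N}/\triangle t \geq 0$ is dropped. Multiplying by $2$ bounds $2\sum_k\Vert\mathcal{V}_{\triangle t}^{1/2}\psi^{k+1}\Vert_X^2 + \triangle t\sum_k\Vert\mathcal{V}_{\triangle t}\psi^{k+1}\Vert_X^2 + \sum_k\Vert B^{1/2}m_z^k\Vert_X^2$ by twice the two $B$-terms of the claimed right-hand side. It then remains to convert the $\psi$-viscosity into $z$-viscosity: writing $z^{k+1} = u^{k+1}-\psi^{k+1}$ and using $\Vert a-b\Vert_X^2 \leq 2\Vert a\Vert_X^2 + 2\Vert b\Vert_X^2$ gives $\Vert\mathcal{V}_{\triangle t}^{1/2}z^{k+1}\Vert_X^2 \leq 2\Vert\mathcal{V}_{\triangle t}^{1/2}u^{k+1}\Vert_X^2 + 2\Vert\mathcal{V}_{\triangle t}^{1/2}\psi^{k+1}\Vert_X^2$ and likewise for $\mathcal{V}_{\triangle t}z^{k+1}$. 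Adding these to the summed estimate, the $\psi$-viscosity terms combine precisely with the coefficients $2$ and $\triangle t$ appearing on the left, and the $u$-viscosity terms land with exactly the coefficients $2$ and $\triangle t$ of the target.

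The argument is essentially bookkeeping once the midpoint pairing is chosen; the only delicate point, which I expect to be the main obstacle, is tracking the two viscosity contributions (the $\Vert\mathcal{V}_{\triangle t}^{1/2}\cdot\Vert_X^2$ and $\Vert\mathcal{V}_{\triangle t}\cdot\Vert_X^2$ terms, with their respective $\triangle t$ weights) through the midpoint expansion and through the final triangle-inequality conversion from $\psi$ to $z$, so that the prefactors $2$ and $\triangle t$ come out exactly as stated rather than merely up to a constant.
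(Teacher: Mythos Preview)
Your proposal is correct and follows essentially the same route as the paper's proof: you introduce $\psi^k=u^k-z^k$, pair the midpoint difference equation with $m_\psi^k$, use skew-adjointness and \eqref{ineqBF}, apply the Young inequality with $\theta=\tfrac12$, insert the expansion of $\Vert\widetilde\psi^{k+1}\Vert_X^2$ coming from the viscosity step, sum and telescope, and finally convert the $\psi$-viscosity into $z$-viscosity via $\Vert z^{k+1}\Vert^2\leq 2\Vert u^{k+1}\Vert^2+2\Vert\psi^{k+1}\Vert^2$. The only cosmetic difference is that the paper applies Young's inequality after summing rather than before, which is immaterial; the coefficients $2$ and $\triangle t$ match exactly as you anticipated.
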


\begin{proof}
For every $k$, setting $\psi^k=u^k-z^k$ and $\widetilde{\psi}^k=\widetilde{u}^k - \widetilde{z}^k$, we have
\begin{equation}\label{dis_t_1}
\left\{\begin{split}
& \frac{\widetilde{\psi}^{k+1} - \psi^k}{\triangle t}=-A\left(\frac{\psi^k+ \widetilde{\psi}^{k+1}}{2}\right) -B\left(F \left( \frac{u^k+ \widetilde{u}^{k+1}}{2}\right)
- \frac{z^k + \widetilde{z}^k}{2}\right),\\
& \frac{\widetilde{\psi}^{k+1}-\psi^{k+1}}{\triangle t} = \mathcal{V}_{\triangle t} \psi^{k+1} .
\end{split}\right.
\end{equation}
Denoting $E_{\psi^k}=\frac{1}{2}\Vert \psi^k\Vert _{X}^2$, and taking the scalar product in $X$ in the first equation of \eqref{dis_t_1} with
$\frac{\widetilde{\psi}^{k+1} + \psi^k}{2}$, it follows that
\begin{equation*}
\begin{split}
& E_{\widetilde{\psi}^{k+1}} - E_{\psi^k} + \triangle t \left\langle B \left( \frac{z^k+ \widetilde{z}^{k+1}}{2}\right), \frac{z^k+ \widetilde{z}^{k+1}}{2}\right\rangle_X \\
&= - \triangle t \left\langle B F \left( \frac{u^k+ \widetilde{u}^{k+1}}{2}\right), \frac{u^k+ \widetilde{u}^{k+1}}{2}\right\rangle_X \\
& \qquad + \triangle t \left\langle B F \left( \frac{u^k+ \widetilde{u}^{k+1}}{2}\right),  \frac{z^k+ \widetilde{z}^{k+1}}{2}\right\rangle_X +
\triangle t \left\langle B \left( \frac{z^k+ \widetilde{z}^{k+1}}{2}\right),  \frac{u^k+ \widetilde{u}^{k+1}}{2}\right\rangle_X \\
&\leq 
\triangle t \left\langle B F \left( \frac{u^k+ \widetilde{u}^{k+1}}{2}\right),  \frac{z^k+ \widetilde{z}^{k+1}}{2}\right\rangle_X +
\triangle t \left\langle B \left( \frac{z^k+ \widetilde{z}^{k+1}}{2}\right),  \frac{u^k+ \widetilde{u}^{k+1}}{2}\right\rangle_X .
\end{split}
\end{equation*}
Using the second equation of \eqref{dis_t_1}, we infer that 
$$
E_{\widetilde{\psi}^{k+1}} - E_{\psi^{k+1}} = \triangle t\, \Vert (\mathcal{V}_{\triangle t})^{1/2} \psi^{k+1} \Vert_X^2 + \frac{(\triangle t)^2}{2} \Vert \mathcal{V}_{\triangle t} \psi^{k+1} \Vert_X^2 .
$$
Subtracting the latter equation to the previous one, we obtain that
\begin{multline*}
E_{\psi^{k+1}}- E_{\psi^k} + \triangle t \left\langle B \left( \frac{z^k+ \widetilde{z}^{k+1}}{2}\right), \frac{z^k+ \widetilde{z}^{k+1}}{2}\right\rangle_X +
\triangle t\, \Vert (\mathcal{V}_{\triangle t})^{1/2} \psi^{k+1} \Vert_X^2 + \frac{(\triangle t)^2}{2} \Vert \mathcal{V}_{\triangle t} \psi^{k+1} \Vert_X^2 \\
 \leq 
\triangle t \left\langle B F \left( \frac{u^k+ \widetilde{u}^{k+1}}{2}\right), \frac{z^k+ \widetilde{z}^{k+1}}{2} \right\rangle_X +
\triangle t \left\langle B \left( \frac{z^k+ \widetilde{z}^{k+1}}{2}\right), \frac{u^k+ \widetilde{u}^{k+1}}{2}\right\rangle_X .
\end{multline*}
Summing from $k=0$ to $k=N-1$, since $\psi^0=0$, we get that
\begin{multline*}
E_{\psi^{N}} + \triangle t \sum_{k=0}^{N-1} \left\langle B \left( \frac{z^k+ \widetilde{z}^{k+1}}{2}\right), \frac{z^k+ \widetilde{z}^{k+1}}{2}\right\rangle_X \\
+ \triangle t\, \sum_{k=0}^{N-1} \Vert (\mathcal{V}_{\triangle t})^{1/2} \psi^{k+1} \Vert_X^2 + \frac{(\triangle t)^2}{2}\sum_{k=0}^{N-1}  \Vert \mathcal{V}_{\triangle t} \psi^{k+1} \Vert_X^2 \\
\leq
\triangle t \sum_{k=0}^{N-1} \left\langle B F \left( \frac{u^k+ \widetilde{u}^{k+1}}{2}\right),  \frac{z^k+ \widetilde{z}^{k+1}}{2}\right\rangle_X +
\triangle t  \sum_{k=0}^{N-1}\left\langle B \left( \frac{z^k+ \widetilde{z}^{k+1}}{2}\right),  \frac{u^k+ \widetilde{u}^{k+1}}{2}\right\rangle_X.
\end{multline*}
Thanks to the Young inequality, and since $E_{\psi^{N}}\geq 0$, we infer that
\begin{multline*}
 \frac{\triangle t }{2}\sum_{k=0}^{N-1} \left\Vert B^{1/2} \left( \frac{z^k+ \widetilde{z}^{k+1}}{2}\right)\right\Vert_X^2 + \triangle t\, \sum_{k=0}^{N-1} \Vert (\mathcal{V}_{\triangle t})^{1/2} \psi^{k+1} \Vert_X^2 + \frac{(\triangle t)^2}{2}\sum_{k=0}^{N-1}  \Vert \mathcal{V}_{\triangle t} \psi^{k+1} \Vert_X^2 \\
\leq
\triangle t \sum_{k=0}^{N-1} \left\Vert B^{1/2} F \left( \frac{u^k+ \widetilde{u}^{k+1}}{2}\right)\right\Vert_X^2 +
\triangle t  \sum_{k=0}^{N-1}\left\Vert B^{1/2} \left( \frac{u^k+ \widetilde{u}^{k+1}}{2}\right)\right\Vert_X^2.
\end{multline*}
The lemma follows, using that $\Vert (\mathcal{V}_{\triangle t})^{1/2}\psi^{k+1}\Vert_X^2 \geq \frac{1}{2}\Vert (\mathcal{V}_{\triangle t})^{1/2} z^{k+1}\Vert_X^2 - \Vert  (\mathcal{V}_{\triangle t})^{1/2} u^{k+1}\Vert_X^2$ and that $\Vert \mathcal{V}_{\triangle t}\psi^{k+1}\Vert_X^2 \geq \frac{1}{2}\Vert \mathcal{V}_{\triangle t} z^{k+1}\Vert_X^2 - \Vert  \mathcal{V}_{\triangle t} u^{k+1}\Vert_X^2 $.
\end{proof}

\paragraph{Second step.} \textit{Comparison of the linear damped equation \eqref{time_dis_l} with the time semi-discretized conservative linear equation with viscosity \eqref{time_dis_phi}.}

\begin{lemma}\label{lemmadis_time}
For every solution $(z^k)_{k\in\N}$ of \eqref{time_dis_l}, the solution $(\phi^k)_{k\in\N}$ of \eqref{time_dis_phi} such that $\phi^{0}=z^0$ satisfies
\begin{multline*}
\frac{1}{2}\sum_{k=0}^{N-1} \left\Vert  B^{1/2} \left( \frac{\phi^k+ \widetilde{\phi}^{k+1}}{2}\right)\right\Vert_X^2 
+ \sum_{k=0}^{N-1} \Vert (\mathcal{V}_{\triangle t})^{1/2} \phi^{k+1}\Vert_X^2+
\frac{\triangle t}{2}\sum_{k=0}^{N-1} \Vert \mathcal{V}_{\triangle t} \phi^{k+1}\Vert_X^2 \\
\lesssim 
k_T\left(    \sum_{k=0}^{N-1} \left\Vert  B^{1/2} \left( \frac{z^k+ \widetilde{z}^{k+1}}{2}\right)\right\Vert_X^2  + \sum_{k=0}^{N-1} \Vert (\mathcal{V}_{\triangle t})^{1/2} z^{k+1}\Vert_X^2+
\frac{\triangle t}{2} \sum_{k=0}^{N-1} \Vert \mathcal{V}_{\triangle t} z^{k+1}\Vert_X^2 \right) ,
\end{multline*}
with 
$k_T=\max(1+(4T^2+1)^2||B^{1/2}||^4,2)$.
\end{lemma}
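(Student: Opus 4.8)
The plan is to mirror the proof of Lemma~\ref{cont_lem2}, and of its space semi-discrete counterpart Lemma~\ref{lemma2}, replacing the double time integration by a double summation and carrying the viscosity terms along. First I would set $\theta^k=\phi^k-z^k$ and $\widetilde\theta^{k+1}=\widetilde\phi^{k+1}-\widetilde z^{k+1}$. Subtracting \eqref{time_dis_phi} from \eqref{time_dis_l}, the pair $(\theta^k)$ solves the \emph{same} implicit midpoint scheme with viscosity, now forced by the damping of $z$:
\begin{equation*}
\frac{\widetilde\theta^{k+1}-\theta^k}{\triangle t}+A\Big(\frac{\theta^k+\widetilde\theta^{k+1}}{2}\Big)=B\Big(\frac{z^k+\widetilde z^{k+1}}{2}\Big),\qquad \frac{\widetilde\theta^{k+1}-\theta^{k+1}}{\triangle t}=\mathcal{V}_{\triangle t}\theta^{k+1},
\end{equation*}
with $\theta^0=0$. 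Taking the scalar product of the first relation with the midpoint $\frac{\theta^k+\widetilde\theta^{k+1}}{2}$, the term in $A$ vanishes by skew-adjointness, and combining with the viscosity relation exactly as in the derivation of \eqref{dissip3} and in Lemma~\ref{t_dis_lem1} I obtain the exact identity
\begin{equation*}
E_{\theta^{k+1}}-E_{\theta^k}+\triangle t\,\Vert(\mathcal{V}_{\triangle t})^{1/2}\theta^{k+1}\Vert_X^2+\frac{(\triangle t)^2}{2}\Vert\mathcal{V}_{\triangle t}\theta^{k+1}\Vert_X^2=\triangle t\,a_k,\qquad a_k:=\Big\langle B\Big(\tfrac{z^k+\widetilde z^{k+1}}{2}\Big),\tfrac{\theta^k+\widetilde\theta^{k+1}}{2}\Big\rangle_X .
\end{equation*}

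Next I would exploit this identity in two ways. Summing it from $0$ to $j-1$ and dropping the nonnegative viscosity terms gives $E_{\theta^j}\leq \triangle t\sum_{k=0}^{j-1}a_k$; summing once more over $j$ produces the discrete weight, namely $\sum_{j=0}^{N}E_{\theta^j}\leq \triangle t\sum_{k=0}^{N-1}(N-k)a_k$, and since $\triangle t(N-k)\leq \triangle t\,N\leq T$ this is the exact analogue of the $(T-t)$ weight appearing in Lemma~\ref{cont_lem2}. On the other hand, summing the identity from $0$ to $N-1$ and \emph{keeping} the viscosity terms controls $\triangle t\sum_k\Vert(\mathcal{V}_{\triangle t})^{1/2}\theta^{k+1}\Vert_X^2+\frac{(\triangle t)^2}{2}\sum_k\Vert\mathcal{V}_{\triangle t}\theta^{k+1}\Vert_X^2$ directly by $\triangle t\sum_k a_k$. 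The decisive point, in contrast with the space semi-discrete Lemma~\ref{lemma2}, is that these viscosity contributions of $\theta$ are furnished \emph{for free} by the energy identity, so no uniform bound on $\Vert\mathcal{V}_{\triangle t}\Vert$ (no analogue of \eqref{metz2059}) is needed here.

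It then remains to estimate the bilinear terms. Writing $a_k=\langle B^{1/2}\frac{z^k+\widetilde z^{k+1}}{2},\,B^{1/2}\frac{\theta^k+\widetilde\theta^{k+1}}{2}\rangle_X$ and using $\Vert B^{1/2}(\cdot)\Vert_X^2\leq\Vert B\Vert\,\Vert\cdot\Vert_X^2$, I would bound the midpoint $\frac{\theta^k+\widetilde\theta^{k+1}}{2}$ through the nodal values by means of the viscosity relation $\widetilde\theta^{k+1}=(\mathrm{id}_X+\triangle t\,\mathcal{V}_{\triangle t})\theta^{k+1}$, so that $\sum_k\Vert\tfrac{\theta^k+\widetilde\theta^{k+1}}{2}\Vert_X^2\lesssim \sum_{j}\Vert\theta^j\Vert_X^2+(\triangle t)^2\sum_k\Vert\mathcal{V}_{\triangle t}\theta^{k+1}\Vert_X^2\lesssim T\sum_k a_k$, the two pieces being exactly the quantities controlled in the previous step. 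A Cauchy--Schwarz on the sum (equivalently a Young inequality with a well-chosen parameter) then closes the estimate: with $S_z=\sum_k\Vert B^{1/2}\frac{z^k+\widetilde z^{k+1}}{2}\Vert_X^2$ and $S_\theta$ its analogue for $\theta$, one finds $S_\theta\lesssim T\Vert B\Vert\sum_k a_k\lesssim T\Vert B\Vert\sqrt{S_z}\sqrt{S_\theta}$, whence $S_\theta\lesssim T^2\Vert B^{1/2}\Vert^4\,S_z$, and likewise the viscosity $\theta$-sums are $\lesssim \mathrm{poly}(T,\Vert B^{1/2}\Vert)\,S_z$; collecting the constants reproduces a factor $k_T$ of the stated polynomial form.

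Finally, writing $\phi^k=\theta^k+z^k$, $\widetilde\phi^{k+1}=\widetilde\theta^{k+1}+\widetilde z^{k+1}$ and using $\Vert a+b\Vert_X^2\leq 2\Vert a\Vert_X^2+2\Vert b\Vert_X^2$, each of the three quantities on the left-hand side of the lemma splits into a $\theta$-part and a $z$-part; since the $\theta$-parts have just been bounded by $k_T$ times the $z$-parts, the conclusion follows. I expect the main obstacle to be the careful bookkeeping of the several viscosity contributions (the $\mathcal{V}_{\triangle t}^{1/2}$ and $\mathcal{V}_{\triangle t}$ terms, with their respective $\triangle t$ and $(\triangle t)^2$ weights) through the double summation, together with the passage from the scheme's midpoint $\frac{\theta^k+\widetilde\theta^{k+1}}{2}$ to nodal values via the viscosity step, all while keeping every constant uniform in $\triangle t$.
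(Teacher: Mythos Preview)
Your proposal is correct and follows essentially the same route as the paper's proof: define $\theta^k=\phi^k-z^k$, derive the discrete energy identity for $\theta$, use a double summation to control $\sum_k E_{\theta^k}$ and a single summation to control the viscosity sums of $\theta$, then close by Cauchy--Schwarz/Young and split $\phi=\theta+z$.

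The only noteworthy variation is in how the midpoint quantity $\sum_k\bigl\Vert\tfrac{\theta^k+\widetilde\theta^{k+1}}{2}\bigr\Vert_X^2$ is handled. The paper bounds it by $\sum_k(E_{\theta^k}+E_{\widetilde\theta^{k+1}})$ and then separately estimates $\sum_k E_{\widetilde\theta^{k+1}}$ via the relation $E_{\widetilde\theta^{k+1}}=E_{\theta^{k+1}}+\triangle t\Vert(\mathcal{V}_{\triangle t})^{1/2}\theta^{k+1}\Vert_X^2+\tfrac{(\triangle t)^2}{2}\Vert\mathcal{V}_{\triangle t}\theta^{k+1}\Vert_X^2$, combined with the already-obtained viscosity bound. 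You instead substitute $\widetilde\theta^{k+1}=(\mathrm{id}_X+\triangle t\,\mathcal{V}_{\triangle t})\theta^{k+1}$ directly into the midpoint and bound by $\sum_j\Vert\theta^j\Vert_X^2+(\triangle t)^2\sum_k\Vert\mathcal{V}_{\triangle t}\theta^{k+1}\Vert_X^2$. Both devices reduce the midpoint to quantities already controlled; yours is slightly more direct. Similarly, your closure via $\sum_k a_k\leq\sqrt{S_z}\sqrt{S_\theta}$ is the Cauchy--Schwarz form of the paper's Young inequality with parameter; the resulting constants are of the same polynomial type in $T$ and $\Vert B^{1/2}\Vert$.
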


\begin{proof}
Setting $\theta^i=\phi^i-z^i$ and $\widetilde{\theta}^i=\widetilde{\phi}^i-\widetilde{z}^i$, we have
\begin{equation}\label{time_dis_theta}
\left\{\begin{split}
& \frac{\widetilde{\theta}^{i+1} - \theta^i}{\triangle t} + A\left(\frac{\theta^i+ \widetilde{\theta}^{i+1}}{2}\right) - B \left( \frac{z^i+ \widetilde{z}^{i+1}}{2}\right) = 0 ,  \\
& \frac{ \widetilde{\theta}^{i+1}- \theta^{i+1}}{\triangle t} = \mathcal{V}_{\triangle t} \theta^{i+1} .
\end{split}\right.
\end{equation}
Taking the scalar product in $X$ in the first equation of \eqref{time_dis_theta} with $\frac{1}{2}(\widetilde{\theta}^{i+1} + \theta^i)$, we get
\begin{equation}\label{int1}
E_{\widetilde{\theta}^{i+1}} - E_{{\theta}^{i}}=\triangle t \left\langle B \left( \frac{z^i+ \widetilde{z}^{i+1}}{2}\right),  \frac{\theta^i+ \widetilde{\theta}^{i+1}}{2}\right\rangle_X .
\end{equation}
Now, using the second equation in \eqref{time_dis_theta}, we obtain that
\begin{equation}\label{int2}
E_{\widetilde{\theta}^{i+1}} - E_{{\theta}^{i+1}}=\triangle t \, \Vert (\mathcal{V}_{\triangle t})^{1/2} \theta^{i+1}\Vert_X^2 + \frac{(\triangle t )^2}{2} \Vert \mathcal{V}_{\triangle t} \theta^{i+1}\Vert_X^2 .
\end{equation}
Subtracting \eqref{int2} to \eqref{int1}, and then summing from $i=0$ to $i=k-1$, with $k \leq N$, using that $\theta^0=0$, we obtain
\begin{multline}\label{orl1235}
E_{\theta^{k}} + \triangle t\, \sum_{i=0}^{k-1} \Vert (\mathcal{V}_{\triangle t})^{1/2}\theta^{i+1}\Vert_X^2 + \frac{(\triangle t )^2}{2}  \sum_{i=0}^{k-1} \Vert \mathcal{V}_{\triangle t}\theta^{i+1}\Vert_X^2 \\
= \triangle t \sum_{i=0}^{k-1} \left\langle B \left( \frac{z^i+ \widetilde{z}^{i+1}}{2}\right),  \frac{\theta^i+ \widetilde{\theta}^{i+1}}{2}\right\rangle_X .
\end{multline}
In passing, note that \eqref{orl1235} implies that
\begin{equation}\label{orl1318}
\sum_{k=0}^{N-1} \Vert (\mathcal{V}_{\triangle t})^{1/2}\theta^{k+1}\Vert_X^2 + \frac{\triangle t}{2}  \sum_{k=0}^{N-1} \Vert \mathcal{V}_{\triangle t}\theta^{k+1}\Vert_X^2
\leq \sum_{k=0}^{N-1} \left\langle B \left( \frac{z^k+ \widetilde{z}^{k+1}}{2}\right),  \frac{\theta^k+ \widetilde{\theta}^{k+1}}{2}\right\rangle_X .
\end{equation}
Now, summing \eqref{orl1235} from $k=0$ to $k=N-1$, using that $N\triangle t\leq T$, we get
\begin{multline*}
\sum_{k=0}^{N-1} E_{\theta^{k}} + \triangle t\, \sum_{k=0}^{N-1}  (N-1-k) \Vert (\mathcal{V}_{\triangle t})^{1/2}\theta^{k+1}\Vert_X^2 + \frac{(\triangle t )^2}{2} \sum_{k=0}^{N-1} (N-1-k) \Vert \mathcal{V}_{\triangle t}\theta^{k+1}\Vert_X^2 \\
= \triangle t \, \sum_{k=0}^{N-1} (N-1-k) \left\langle B \left( \frac{z^k+ \widetilde{z}^{k+1}}{2}\right),  \frac{\theta^k+ \widetilde{\theta}^{k+1}}{2}\right\rangle_X 
\leq T \sum_{k=0}^{N-1} \left\langle B \left( \frac{z^k+ \widetilde{z}^{k+1}}{2}\right),  \frac{\theta^k+ \widetilde{\theta}^{k+1}}{2}\right\rangle_X ,
\end{multline*}
from which it follows in particular that
\begin{equation}\label{orl1339}
\sum_{k=0}^{N-1} E_{\theta^{k}} \leq T \sum_{k=0}^{N-1} \left\langle B \left( \frac{z^k+ \widetilde{z}^{k+1}}{2}\right),  \frac{\theta^k+ \widetilde{\theta}^{k+1}}{2}\right\rangle_X .
\end{equation}
Besides, thanks to \eqref{int2}, since $\theta^0=0$, we have
\begin{equation*}
\sum_{k=0}^{N-2} E_{\widetilde{\theta}^{k+1}} =  \sum_{k=0}^{N-1} E_{{\theta}^k}+ \triangle t \, \sum_{k=0}^{N-2} \Vert (\mathcal{V}_{\triangle t})^{1/2} \theta^{k+1}\Vert_X^2 + \frac{(\triangle t )^2}{2} \sum_{k=0}^{N-2} \Vert \mathcal{V}_{\triangle t}\theta^{k+1}\Vert_X^2 ,
\end{equation*}
and, using \eqref{orl1318}, we infer that
\begin{equation}\label{orl1335}
\sum_{k=0}^{N-2} E_{\widetilde{\theta}^{k+1}}
\leq  \sum_{k=0}^{N-1} E_{{\theta}^k}+ \triangle t \, \sum_{k=0}^{N-1} \left\langle B \left( \frac{z^k+ \widetilde{z}^{k+1}}{2}\right),  \frac{\theta^k+ \widetilde{\theta}^{k+1}}{2}\right\rangle_X ,
\end{equation}
Using \eqref{int1} for $i=N-1$, we have
\begin{equation}\label{intX4}
E_{\widetilde{\theta}^N} = E_{\theta^{N-1}} + \triangle t \left\langle B \left( \frac{z^{N-1}+ \widetilde{z}^{N}}{2}\right),  \frac{\theta^{N-1}+ \widetilde{\theta}^{N}}{2}\right\rangle_X .
\end{equation}
Summing \eqref{orl1335} and \eqref{intX4}, we obtain
\begin{equation}\label{orl1338}
\sum_{k=0}^{N-1} E_{\widetilde{\theta}^{k+1}}
\leq  2\sum_{k=0}^{N-1} E_{{\theta}^k}+ 2\triangle t \, \sum_{k=0}^{N-1} \left\langle B \left( \frac{z^k+ \widetilde{z}^{k+1}}{2}\right),  \frac{\theta^k+ \widetilde{\theta}^{k+1}}{2}\right\rangle_X ,
\end{equation}
Finally, summing three times \eqref{orl1339} and \eqref{orl1338}, we get
\begin{equation}\label{orl1343}
\sum_{k=0}^{N-1} ( E_{\theta^{k}} + E_{\widetilde{\theta}^{k+1}} )
\leq  4T \sum_{k=0}^{N-1} \left\langle B \left( \frac{z^k+ \widetilde{z}^{k+1}}{2}\right),  \frac{\theta^k+ \widetilde{\theta}^{k+1}}{2}\right\rangle_X
\end{equation}
Noting that
$
\left\Vert \frac{\theta^k+ \widetilde{\theta}^{k+1}}{2}\right\Vert_X^2 
\leq \frac{1}{2} \left( \Vert \theta^k\Vert_X^2 + \Vert \widetilde{\theta}^{k+1} \Vert_X^2  \right)  =  E_{\theta^{k}} + E_{\widetilde{\theta}^{k+1}} ,
$
we infer from \eqref{orl1318} and \eqref{orl1338} that
\begin{multline}\label{orl1343}
\sum_{k=0}^{N-1} \left\Vert \frac{\theta^k+ \widetilde{\theta}^{k+1}}{2}\right\Vert_X^2  + \sum_{k=0}^{N-1} \Vert (\mathcal{V}_{\triangle t})^{1/2}\theta^{k+1}\Vert_X^2 + \frac{\triangle t}{2}  \sum_{k=0}^{N-1} \Vert \mathcal{V}_{\triangle t}\theta^{k+1}\Vert_X^2 \\
\leq  (4T+1) \sum_{k=0}^{N-1} \left\langle B \left( \frac{z^k+ \widetilde{z}^{k+1}}{2}\right),  \frac{\theta^k+ \widetilde{\theta}^{k+1}}{2}\right\rangle_X .
\end{multline}
By the Young inequality, we have
\begin{multline*}
(4T+1) \sum_{k=0}^{N-1}  \left\langle B \left( \frac{z^k+ \widetilde{z}^{k+1}}{2}\right),  \frac{\theta^k+ \widetilde{\theta}^{k+1}}{2}\right\rangle_X \\
\leq \frac{1}{2}(4T+1)^2 \Vert B^{1/2}\Vert^2 \sum_{k=0}^{N-1}  \left\Vert B^{1/2} \left( \frac{z^k+ \widetilde{z}^{k+1}}{2}\right) \right\Vert_X^2 +  \frac{1}{2} \sum_{k=0}^{N-1}  \left\Vert \frac{\theta^k+ \widetilde{\theta}^{k+1}}{2}\right\Vert_X^2  ,
\end{multline*}
and therefore we get from \eqref{orl1343} that
\begin{multline}\label{a1641}
\frac{1}{2} \sum_{k=0}^{N-1} \left\Vert \frac{\theta^k+ \widetilde{\theta}^{k+1}}{2}\right\Vert_X^2 + \sum_{k=0}^{N-1}  \Vert (\mathcal{V}_{\triangle t})^{1/2}\theta^{k+1}\Vert_X^2 + \frac{\triangle t}{2} \sum_{k=0}^{N-1} \Vert \mathcal{V}_{\triangle t}\theta^{k+1}\Vert_X^2 \\
\leq  
\frac{1}{2}(4T+1)^2 \Vert B^{1/2}\Vert^2 \sum_{k=0}^{N-1}  \left\Vert B^{1/2} \left( \frac{z^k+ \widetilde{z}^{k+1}}{2}\right) \right\Vert_X^2 .
\end{multline}
Now, since $\phi^k = \theta^k+z^k$ and $\widetilde{\phi}^k = \widetilde{\theta}^k+\widetilde{z}^k$, using the inequality $(a+b)^2\leq 2(a^2+b^2)$, we have
\begin{multline*}
\frac{1}{2} \sum_{k=0}^{N-1} \left\Vert B^{1/2} \left( \frac{\phi^k+ \widetilde{\phi}^{k+1}}{2} \right) \right\Vert_X^2 + \sum_{k=0}^{N-1}  \Vert (\mathcal{V}_{\triangle t})^{1/2}\phi^{k+1}\Vert_X^2 + \frac{\triangle t}{2} \sum_{k=0}^{N-1} \Vert \mathcal{V}_{\triangle t}\phi^{k+1}\Vert_X^2 \\
\leq
\Vert B^{1/2}\Vert^2  \sum_{k=0}^{N-1} \left\Vert \frac{\theta^k+ \widetilde{\theta}^{k+1}}{2}  \right\Vert_X^2 + 2 \sum_{k=0}^{N-1}  \Vert (\mathcal{V}_{\triangle t})^{1/2}\theta^{k+1}\Vert_X^2 + \triangle t \, \sum_{k=0}^{N-1} \Vert \mathcal{V}_{\triangle t}\theta^{k+1}\Vert_X^2 \\
+ \sum_{k=0}^{N-1} \left\Vert B^{1/2} \left( \frac{z^k+ \widetilde{z}^{k+1}}{2} \right) \right\Vert_X^2 + 2 \sum_{k=0}^{N-1}  \Vert (\mathcal{V}_{\triangle t})^{1/2}z^{k+1}\Vert_X^2 + \triangle t \, \sum_{k=0}^{N-1} \Vert \mathcal{V}_{\triangle t}z^{k+1}\Vert_X^2  ,
\end{multline*}
and, using \eqref{a1641}, the lemma follows.
\end{proof}

\paragraph{Third step.} \textit{Nonlinear energy estimate.}

We define $w$ by \eqref{def_w}, as before, with $\beta>0$ to be chosen later.

\begin{lemma}\label{dis_time_lem3}
For every solution $(u^k)_{k\in\N} $ of \eqref{time_dis_nl}, we have
\begin{multline*}
\triangle t \sum_{k=0}^{N-1} w(E_{\phi^0})\left( \left\Vert B^{1/2} \left(\frac{u^k+ \widetilde{u}^{k+1}}{2}\right)\right\Vert_X^2 + 
 \left\Vert B^{1/2}F \left(\frac{u^k+ \widetilde{u}^{k+1}}{2}\right)\right\Vert_X^2 \right) 
\\
\lesssim \Vert B\Vert  N \triangle t \,H^*(w(E_{\phi^0})) +
(w(E_{\phi^0})+1) \triangle t  \sum_{k=0}^{N-1} \left\langle B\left(\frac{u^k+ \widetilde{u}^{k+1}}{2}\right), F\left(\frac{u^k+ \widetilde{u}^{k+1}}{2}\right) \right\rangle_X .
\end{multline*}
\end{lemma}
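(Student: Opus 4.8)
The plan is to transcribe the proof of Lemma~\ref{cont_lem3} almost verbatim, replacing the time integral $\int_0^T\cdot\,dt$ by the Riemann-type sum $\triangle t\sum_{k=0}^{N-1}\cdot$ (so that the length $T$ is replaced by $N\triangle t$). Writing $v^k=\frac{u^k+\widetilde{u}^{k+1}}{2}$ for the midpoint and $f^k=U^{-1}v^k$ for its image under the isometry, the three quantities appearing in the statement become, exactly as in Lemma~\ref{cont_lem3}, $\Vert B^{1/2}v^k\Vert_X^2=\int_\Omega b(f^k)^2\,d\mu$, $\Vert B^{1/2}F(v^k)\Vert_X^2=\int_\Omega b\,\rho(f^k)^2\,d\mu$ and $\langle Bv^k,F(v^k)\rangle_X=\int_\Omega b\,f^k\rho(f^k)\,d\mu$. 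The statement then reduces to the discrete analogue of \eqref{est_cont_lem3_isom}, and the decisive structural point---just as in the continuous case---is that the weight $w(E_{\phi^0})$ is a single constant, independent of the index $k$, so that it factors out of the sum.

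First I would treat the term carrying $\rho(f^k)^2$. For each fixed $k$ I split $\Omega$ into $\Omega_1^k=\{x:|f^k(x)|\leq\varepsilon_0\}$ and its complement, with $\varepsilon_0>0$ chosen small enough---depending only on $\rho$ and $g$, hence uniformly in $k$ and $\triangle t$---that \eqref{ineqg} forces $\tfrac{1}{c_2^2}\rho(f^k)^2\leq s_0^2$ on $\Omega_1^k$. On $\Omega_1^k$ I apply Jensen's inequality for the probability measure $b\,d\mu/\int_{\Omega_1^k}b\,d\mu$, use $H(x)=\sqrt{x}\,g(\sqrt{x})$ and the upper bound in \eqref{ineqg}, and invert the increasing function $H$---exactly as in Lemma~\ref{cont_lem3}, with \eqref{assumptionfrhof} used to enlarge the domain of integration from $\Omega_1^k$ to $\Omega$ inside $H^{-1}$. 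Multiplying by $w(E_{\phi^0})$ and invoking the Young inequality $AB\leq H(A)+H^*(B)$ turns the $H^{-1}$ term into a bound of the form $c_2\int_\Omega b\,f^k\rho(f^k)\,d\mu+c_2^2\int_{\Omega_1^k}b\,d\mu\,H^*(w(E_{\phi^0}))$. On the complement, where $|f^k|$ is bounded away from $0$, the continuity of $g$ together with \eqref{ineqg} gives $|\rho(f^k)|\lesssim|f^k|$, whence $\int_{\Omega\setminus\Omega_1^k}b\,\rho(f^k)^2\,d\mu\lesssim\int_\Omega b\,f^k\rho(f^k)\,d\mu$ by \eqref{assumptionfrhof}.

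Summing these two bounds over $k$ with weight $\triangle t$ and using $\int_{\Omega_1^k}b\,d\mu\leq\int_\Omega b\,d\mu\leq\Vert B\Vert$ (with $N\triangle t$ in the role of $T$) controls the $\rho(f^k)^2$ contribution. The $\int_\Omega b(f^k)^2\,d\mu$ contribution is handled identically, now with the cutoff $\varepsilon_1=\min(s_0,g(r_1))$, $r_1^2=H^{-1}\!\big(\tfrac{c_1}{c_2}H(s_0^2)\big)$, and the lower bound $c_1g(|f^k|)\leq|\rho(f^k)|$ of \eqref{ineqg}, precisely as at the end of the proof of Lemma~\ref{cont_lem3}. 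Adding the two contributions yields the claimed inequality.

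I do not anticipate a genuine obstacle: the argument is a line-by-line adaptation of Lemma~\ref{cont_lem3}, and---in contrast with the space semi-discrete Lemma~\ref{lemma3}---no viscosity term appears in the present statement, so nothing extra must be absorbed. The only points requiring care are bookkeeping: checking that $\varepsilon_0,\varepsilon_1$ depend only on $\rho$ and $g$, so that all implied constants stay uniform in $\triangle t$ as demanded by the $\lesssim$ convention of this section, and that $\frac{1}{\int_{\Omega_1^k}b\,d\mu}\int_{\Omega_1^k}\tfrac{1}{c_2^2}\rho(f^k)^2\,b\,d\mu$ indeed lies in $[0,s_0^2]$ for each $k$ so that Jensen's inequality applies. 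Finally, since each midpoint $v^k$ lives in the full space $X$, the original assumptions \eqref{assumptionfrhof} and \eqref{ineqg} on $\rho$ apply directly, and no sampled analogue such as \eqref{assumptionfrhotildef}--\eqref{ineqgrhotilde} is needed.
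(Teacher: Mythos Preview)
Your proposal is correct and follows essentially the same approach as the paper's proof: both transcribe Lemma~\ref{cont_lem3} to the time-discrete setting by replacing $\int_0^T\cdot\,dt$ with $\triangle t\sum_{k=0}^{N-1}\cdot$, splitting $\Omega$ into $\Omega_1^k$ (and $\Omega_2^k$) according to the size of the midpoint image under $U^{-1}$, and applying Jensen plus Young exactly as before. The only cosmetic difference is that the paper writes $f^k=U^{-1}u^k$, $\widetilde f^k=U^{-1}\widetilde u^k$ and works with $\frac{f^k+\widetilde f^{k+1}}{2}$, whereas you set $f^k=U^{-1}v^k$ directly for the midpoint; since $U^{-1}$ is linear these are identical.
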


\begin{proof}
For every $k\in\N$, we set $f^k=U^{-1}u^k$ and $\widetilde{f}^k=U^{-1}\widetilde{u}^k$. By definition of $\rho$, we have $\rho(f^k)=U^{-1}F(Uf^k)$ for every $k \in \N$. We set $\varepsilon_{0}=\min(1, g(s_0))$ and we define, for every $k \in \N$, the set $\Omega_1^k=\{ x \in \Omega \ | \ |\frac{f^k+ \widetilde{f}^{k+1}}{2}(x)| \leq \varepsilon_{0}\}$. Using \eqref{ineqg}, we have $\frac{1}{c_2^2} \rho\left(\frac{f^k+ \widetilde{f}^{k+1}}{2}\right)^2 \in [0,s_0^2]$ and
$$
\frac{1}{\int_{\Omega_1^k} b\, d\mu} \int_{\Omega_1^k} \frac{1}{c_2^2} \rho\left(\frac{f^k+ \widetilde{f}^{k+1}}{2}\right)^2 b \,d\mu \in  [0,s_0^2] .
$$
Since $H$ is convex on $[0,s_0^2]$, applying the Jensen inequality, we get
\begin{multline*}
H\left(\frac{1}{\int_{\Omega_1^k} b\, d\mu} \int_{\Omega_1^k} \frac{1}{c_2^2} \rho\left(\frac{f^k+ \widetilde{f}^{k+1}}{2}\right)^2 b \,d\mu \right) \\
\leq \frac{1}{\int_{\Omega_1^k} b\, d\mu} \int_{\Omega_1^k} \frac{1}{c_2}\left| \rho\left(\frac{f^k+ \widetilde{f}^{k+1}}{2}\right) \right| g\left( \frac{1}{c_2} \left\vert \rho\left(\frac{f^k+ \widetilde{f}^{k+1}}{2}\right)\right\vert \right)b\, d\mu .
\end{multline*}
Using \eqref{ineqg} and the sign condition \eqref{assumptionfrhof}, we infer that
\begin{multline*}
H\left(\frac{1}{\int_{\Omega_1^k} b\, d\mu} \int_{\Omega_1^k} \frac{1}{c_2^2} \rho^2\left(\frac{f^k+ \widetilde{f}^{k+1}}{2}\right) b \,d\mu
\right) \leq \frac{1}{c_2}\frac{1}{\int_{\Omega_1^k} b\, d\mu} \int_{\Omega_1^k} b \frac{f^k+ \widetilde{f}^{k+1}}{2}  \rho\left(\frac{f^k+ \widetilde{f}^{k+1}}{2}\right) \, d\mu  .
\end{multline*}
Since $H$ is increasing, we deduce that
\begin{multline*}
\int_{\Omega_1^k}  \rho\left(\frac{f^k+ \widetilde{f}^{k+1}}{2}\right)^2 b \,d\mu
\leq c_2^2 \int_{\Omega_1^k} b\, d\mu \ H^{-1}\left( \frac{1}{c_2}\frac{1}{\int_{\Omega_1^k} b\, d\mu} \int_{\Omega_1^k} b \frac{f^k+ \widetilde{f}^{k+1}}{2} \rho\left(\frac{f^k+ \widetilde{f}^{k+1}}{2}\right) d\mu\right) ,
\end{multline*}
and therefore,
\begin{multline*}
\triangle t \sum_{k=0}^{N-1}  w(E_{\phi^0})  \int_{\Omega_1^k}  \rho\left(\frac{f^k+ \widetilde{f}^{k+1}}{2}\right)^2 b \,d\mu \\
\leq  \triangle t \sum_{k=0}^{N-1} c_2^2  \int_{\Omega_1^k} b\, d\mu \ w(E_{\phi^0})
H^{-1}\left( \frac{1}{c_2}\frac{1}{\int_{\Omega_1^k} b\, d\mu} \int_{\Omega_1^k} b \frac{f^k+ \widetilde{f}^{k+1}}{2} \rho\left(\frac{f^k+ \widetilde{f}^{k+1}}{2}\right) \, d\mu\right) .
\end{multline*}
Hence, according to the Young inequality $AB\leq H(A)+H^*(B)$, we get that
\begin{multline}\label{fk_nl}
\triangle t \sum_{k=0}^{N-1}  w(E_{\phi^0})  \int_{\Omega_1^k}  \rho\left(\frac{f^k+ \widetilde{f}^{k+1}}{2}\right)^2 b \, d\mu \leq
c_2 \triangle t  \sum_{k=0}^{N-1} \int_{\Omega} b \frac{f^k+ \widetilde{f}^{k+1}}{2} \rho\left(\frac{f^k+ \widetilde{f}^{k+1}}{2}\right) \, d\mu \\
+  N \triangle t \, c_2^2 \int_{\Omega} b\, d\mu \  H^*(w(E_{\phi^0})) .
\end{multline}
Besides, in $\Omega \setminus \Omega_1^k$, using \eqref{ineqg} we have 
$\left\vert \rho\left(\frac{f^k+ \widetilde{f}^{k+1}}{2}\right)\right\vert \lesssim  \left|\frac{f^k+ \widetilde{f}^{k+1}}{2}\right|$, and using \eqref{assumptionfrhof}, it follows that
$$
\int_{\Omega \setminus \Omega_1^k}  \rho\left(\frac{f^k+ \widetilde{f}^{k+1}}{2}\right)^2 b \,d\mu 
\lesssim \int_{\Omega} b \frac{f^k+ \widetilde{f}^{k+1}}{2}  \rho\left(\frac{f^k+ \widetilde{f}^{k+1}}{2}\right) \, d\mu .
$$
Using this inequality and \eqref{fk_nl}, we obtain
\begin{multline*}
\triangle t \sum_{k=0}^{N-1} w(E_{\phi^0})  \int_{\Omega}  \rho\left(\frac{f^k+ \widetilde{f}^{k+1}}{2}\right)^2 b \,d\mu
\lesssim N \triangle t \int_{\Omega} b\, d\mu \ H^*(w(E_{\phi^0})) \\
+ ( w(E_{\phi^0}) + 1)  \triangle t  \sum_{k=0}^{N-1}\int_{\Omega} b \frac{f^k+ \widetilde{f}^{k+1}}{2} \rho\left(\frac{f^k+ \widetilde{f}^{k+1}}{2}\right) \, d\mu .
\end{multline*}
Now, defining $\varepsilon_1$ as in Lemma \ref{cont_lem3}, we set $\Omega_2^k=\{ x \in \Omega \ | \ |\frac{f^k+ \widetilde{f}^{k+1}}{2}(x)| \leq \varepsilon_{1}\}$. Using $\eqref{ineqg}$, we have $\left|\frac{f^k+ \widetilde{f}^{k+1}}{2}\right|^2 \in [0,s_0^2]$ and
$$
\frac{1}{\int_{\Omega_2^k} b\, d\mu} \int_{\Omega_2^k} \left|\frac{f^k+ \widetilde{f}^{k+1}}{2}\right|^2 b \,d\mu \in  [0,s_0^2] .
$$
Since $H$ is convex on $[0,s_0^2]$, by the Jensen inequality, we get
\begin{equation*}
\begin{split}
H\left(\frac{1}{\int_{\Omega_2^k} b\, d\mu} \int_{\Omega_1^k} \left|\frac{f^k+ \widetilde{f}^{k+1}}{2}\right|^2 b \,d\mu \right) 
&\leq \frac{1}{\int_{\Omega_2^k} b\, d\mu} \int_{\Omega_2^k} \left|\frac{f^k+ \widetilde{f}^{k+1}}{2} \right| g\left( \left|\frac{f^k+ \widetilde{f}^{k+1}}{2}\right| \right)b\, d\mu  \\
&\leq \frac{1}{c_1} \frac{1}{\int_{\Omega_2^k} b\, d\mu} \int_{\Omega} b \frac{f^k+ \widetilde{f}^{k+1}}{2} \rho \left( \frac{f^k+ \widetilde{f}^{k+1}}{2}\right) \, d\mu .
\end{split}
\end{equation*}
Since $H$ is increasing on $[0,s_0^2]$, we deduce that
\begin{multline*}
\triangle t\, \sum_{k=0}^{N-1} w(E_{\phi^0})  \int_{\Omega_2^k}  \left|\frac{f^k+ \widetilde{f}^{k+1}}{2}\right|^2 b \,d\mu \\
\leq \triangle t \sum_{k=0}^{N-1} w(E_{\phi^0})  \int_{\Omega_2^k} b\, d\mu \ H^{-1}\left( \frac{1}{c_1}\frac{1}{\int_{\Omega_2^k} b\, d\mu} \int_{\Omega} b \frac{f^k+ \widetilde{f}^{k+1}}{2} \rho\left(\frac{f^k+ \widetilde{f}^{k+1}}{2}\right) \, d\mu\right) .
\end{multline*}
It then follows from the Young inequality $AB\leq H(A)+H^*(B)$ that
\begin{multline*}
\triangle t \, \sum_{k=0}^{N-1} w(E_{\phi^0})   \int_{\Omega_2^k} \left| \frac{f^k+ \widetilde{f}^{k+1}}{2}\right|^2 b \,d\mu \leq
N \triangle t \, \int_{\Omega} b\, d\mu \ H^*(w(E_{\phi^0})) \\
+ \frac{1}{c_1} \triangle t\, \sum_{k=0}^{N-1}  \int_{\Omega} b \frac{f^k+ \widetilde{f}^{k+1}}{2} \rho\left(\frac{f^k+ \widetilde{f}^{k+1}}{2}\right) \, d\mu .
\end{multline*}
The estimate in $\Omega \setminus \Omega_2^k$ is obtained similarly.
The lemma is proved.
\end{proof}

\paragraph{Fourth step.} \textit{End of the proof.}

\begin{lemma}\label{intermediateTheo3}
We have
\begin{equation}\label{estENast}
E_{u^{N}} \leq E_{u^0}\left(1-\rho_TL^{-1}\left(\frac{E_{u^0}}{\beta}\right)\right),
\end{equation}
for some sufficiently small positive constant $\rho_T$.
\end{lemma}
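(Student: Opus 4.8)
The plan is to reproduce, in the time semi-discrete setting, the argument of Lemma~\ref{cont_lem4} (equivalently Lemma~\ref{intermediateTheo}), chaining the three comparison estimates against the uniform observability inequality \eqref{unifobs} and then closing the argument through the convexity relation between $w$, $L$ and $H^*$. First I would start from \eqref{unifobs} and factor out $\triangle t$ on its right-hand side: the resulting bracketed sum, carrying coefficients $1,1,\triangle t$ on its three pieces, is at most twice the left-hand side of Lemma~\ref{lemmadis_time}, whose coefficients are $\tfrac12,1,\tfrac{\triangle t}{2}$. Hence $C_T\Vert\phi^0\Vert_X^2 = 2C_T E_{\phi^0}$ is bounded by $2\triangle t$ times the left-hand side of Lemma~\ref{lemmadis_time}. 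Applying Lemma~\ref{lemmadis_time} and then Lemma~\ref{t_dis_lem1} (whose left-hand side is exactly the $z$-quantity produced by Lemma~\ref{lemmadis_time}), I obtain
$$
2C_T E_{\phi^0} \lesssim k_T\,\triangle t\left(\sum_{k=0}^{N-1}\Bigl\Vert B^{1/2}\Bigl(\tfrac{u^k+\widetilde{u}^{k+1}}{2}\Bigr)\Bigr\Vert_X^2 + \sum_{k=0}^{N-1}\Bigl\Vert B^{1/2}F\Bigl(\tfrac{u^k+\widetilde{u}^{k+1}}{2}\Bigr)\Bigr\Vert_X^2 + \sum_{k=0}^{N-1}\Vert(\mathcal{V}_{\triangle t})^{1/2}u^{k+1}\Vert_X^2 + \triangle t\sum_{k=0}^{N-1}\Vert\mathcal{V}_{\triangle t}u^{k+1}\Vert_X^2\right),
$$
the time-discrete analogue of the first display in the proof of Lemma~\ref{cont_lem4}.

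Next I would multiply this inequality by the constant $w(E_{\phi^0})$ and invoke Lemma~\ref{dis_time_lem3} to convert the two $B^{1/2}$-sums into the term $\Vert B\Vert\,N\triangle t\,H^*(w(E_{\phi^0}))$ together with $(w(E_{\phi^0})+1)\,\triangle t\sum_k\langle B(\tfrac{u^k+\widetilde{u}^{k+1}}{2}),F(\tfrac{u^k+\widetilde{u}^{k+1}}{2})\rangle_X$. The viscosity sums do not appear in Lemma~\ref{dis_time_lem3}, and controlling them is the one step that genuinely differs from the continuous case: here I would use the dissipation identity obtained by summing \eqref{dissip3} over $k=0,\dots,N-1$,
$$
E_{u^0}-E_{u^N} = \triangle t\sum_{k=0}^{N-1}\Bigl\langle B\Bigl(\tfrac{u^k+\widetilde{u}^{k+1}}{2}\Bigr),F\Bigl(\tfrac{u^k+\widetilde{u}^{k+1}}{2}\Bigr)\Bigr\rangle_X + \triangle t\sum_{k=0}^{N-1}\Vert(\mathcal{V}_{\triangle t})^{1/2}u^{k+1}\Vert_X^2 + \frac{(\triangle t)^2}{2}\sum_{k=0}^{N-1}\Vert\mathcal{V}_{\triangle t}u^{k+1}\Vert_X^2 .
$$
Since the three summands are nonnegative, each of the sums $\triangle t\sum_k\Vert(\mathcal{V}_{\triangle t})^{1/2}u^{k+1}\Vert_X^2$, $(\triangle t)^2\sum_k\Vert\mathcal{V}_{\triangle t}u^{k+1}\Vert_X^2$ and the damping sum is individually bounded by $E_{u^0}-E_{u^N}$. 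Using moreover $N\triangle t\leq T$ and $w(E_{\phi^0})\lesssim 1$, I reach
$$
C_T\,w(E_{\phi^0})E_{\phi^0} \lesssim k_T T\Vert B\Vert\,H^*(w(E_{\phi^0})) + k_T\bigl(w(E_{\phi^0})+1\bigr)\bigl(E_{u^0}-E_{u^N}\bigr),
$$
the exact analogue of \eqref{train9h19}.

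From this point the conclusion is identical to the end of the proof of Lemma~\ref{cont_lem4}. Using \eqref{train9h20}, namely $H^*(w(E_{\phi^0}))=\tfrac{1}{\beta}w(E_{\phi^0})E_{\phi^0}$ after choosing $\beta$ large enough that $E_{\phi^0}<\beta s_0^2$, and the bound $w(E_{\phi^0})\leq H'(s_0^2)\lesssim 1$ from \eqref{train9h29}, I would rewrite the $H^*$ term as $\tfrac{k_T T\Vert B\Vert}{\beta}\,w(E_{\phi^0})E_{\phi^0}$ and absorb it into the left-hand side. Enlarging $\beta$ so that the remaining coefficient of $w(E_{\phi^0})E_{\phi^0}$ stays bounded below by a fixed positive multiple of $C_T$, I get $\rho_T\,w(E_{\phi^0})E_{\phi^0}\leq E_{u^0}-E_{u^N}$ for some small $\rho_T>0$. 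Finally, since $E_{\phi^0}=E_{u^0}$ and $w(E_{\phi^0})=L^{-1}(E_{u^0}/\beta)$ by \eqref{def_w}, this rearranges precisely into \eqref{estENast}. I expect the only real obstacle to be the control of the viscosity sums through the dissipation identity \eqref{dissip3}; everything else is careful bookkeeping of the $\triangle t$ factors and of the constant $k_T=\max(1+(4T^2+1)^2\Vert B^{1/2}\Vert^4,2)$, which is uniform in $\triangle t$ by construction, so that the resulting $\rho_T$ and $\beta$ are uniform as well.
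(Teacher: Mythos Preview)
Your proposal is correct and follows essentially the same approach as the paper: chain the uniform observability inequality \eqref{unifobs} with Lemmas~\ref{lemmadis_time}, \ref{t_dis_lem1} and \ref{dis_time_lem3}, then use the summed dissipation identity \eqref{dissip3} to absorb both the damping and viscosity sums into $E_{u^0}-E_{u^N}$, and conclude as in Lemma~\ref{cont_lem4}. Your bookkeeping of the $\triangle t$ factors and the control of the viscosity sums via \eqref{dissip3} is slightly more explicit than the paper's, but the argument is the same.
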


\begin{proof}
Summing in $k$ the energy dissipation relations \eqref{dissip3}, we get
\begin{multline}\label{dissip4}
E_{u^{N}} - E_{u^0}
= - \triangle t \sum_{k=0}^{N-1}  \left\langle B\left(\frac{u^k+ \widetilde{u}^{k+1}}{2}\right), F\left(\frac{u^k+ \widetilde{u}^{k+1}}{2}\right) \right\rangle_X \\
- \triangle t \, \sum_{k=0}^{N-1} \Vert (\mathcal{V}_{\triangle t})^{1/2} u^{k+1}\Vert_X^2 - \frac{(\triangle t )^2}{2} \sum_{k=0}^{N-1}\Vert \mathcal{V}_{\triangle t} u^{k+1}\Vert_X^2 .
\end{multline}
Using successively the observability inequality \eqref{unifobs} and the estimates obtained in Lemmas \ref{lemmadis_time}, \ref{t_dis_lem1}, and \ref{dis_time_lem3}, and since $N \triangle t \leq T$, we get
\begin{multline*}
C_T w(E_{\phi^0}) \Vert \phi^0\Vert_X^2 \\
\lesssim
k_T \Vert B\Vert T H^*(w(E_{\phi^0})) +
k_T (w(E_{\phi^0})+1) \triangle t  \sum_{k=0}^{N-1} \left\langle B\left(\frac{u^k+ \widetilde{u}^{k+1}}{2}\right), F\left(\frac{u^k+ \widetilde{u}^{k+1}}{2}\right) \right\rangle_X   \\
+ k_T \triangle t \, w(E_{\phi^0}) \left(   \sum_{k=0}^{N-1} \Vert (\mathcal{V}_{\triangle t})^{1/2} u^{k+1} \Vert_X^2 + \triangle t\, \sum_{k=0}^{N-1}  \Vert \mathcal{V}_{\triangle t} u^{k+1} \Vert_X^2  \right) .
\end{multline*}
Recalling that $E_{\phi^0}=E_{u^0}$, using \eqref{dissip4}, we infer that
\begin{equation*}
C_T w(E_{u^0}) E_{u^0}  \lesssim k_T  \Vert B\Vert T \, H^*(w(E_{u^0})) + k_T (w(E_{u^0})+1) \left( E_{u^0} - E_{u^{N}} \right).
\end{equation*}
Then, reasoning as in the proof of Lemma \ref{cont_lem4}, we get the conclusion (we skip the details).
\end{proof}

Let now $p \in \N$ be arbitrary and $k\in \{0, \ldots, N\}$. The sequence
$(v^k)_{k \in \{0,\ldots, N-1\}}$ defined by $v^k=u^{k+pN}$ for $k \in \{0,\ldots, N-1\}$, satisfies \eqref{time_dis_nl} and thus \eqref{estENast}. Noting then that
$E_{v^{N}}=E_{u^{(p+1)N}}$ and $E_{v^0}=E_{u^{pN}}$, we deduce that $E_{k+1} - E_k + \rho_T M(E_k) \leq 0$, where $M(x) = xL^{-1}(x)$ for every $x\in[0,s_0^2]$ and $E_k=E_{u^{k N}}/\beta$.

As at the end of Section \ref{sec:proofthm_continuous}, setting $K_r(\tau)=\int_\tau^r \frac{dy}{M(y)}$, we have
$$
M(E_p) \leq \frac{1}{\rho_T} \min_{\ell\in\{0,\ldots,p\}} \left(  \frac{K_r^{-1}(\rho_T(p-\ell)) }{\ell+1} \right) .
$$
We set $t=pT$. For any $\theta \in (0,t]$, we set $l=\left[\frac{\theta}{T}\right] \in \{0,\ldots,p\}$. We have
$$
E_p \leq M^{-1} \left( \frac{T}{\rho_T}\inf_{0<\theta\leq t} \left( \frac{1}{\theta}K_r^{-1}\left( \rho_T\frac{t-\theta}{T} \right) \right)  \right).
$$
As in the proof of \cite[Theorem 2.1]{alabau_AMO2005}, we deduce that
\begin{equation}\label{estEupNg}
E_{u^{pN}} \leq  \beta L\left( \frac{1}{\psi^{-1}\left(\rho_T p \right)} \right),
\end{equation}
for every $p\geq 1/(\rho_TH'(s_0^2))$, with $\psi$ defined by \eqref{defpsir}. 

Moreover, under \eqref{condlimsup}, we get that $E_{u^{pN}} \leq  \beta (H')^{-1}\left( \frac{\beta_3}{\rho_T p} \right)$, for every $p$ sufficiently large, for a certain $\gamma_3>0$ not depending on $p$, $\triangle t$, $E_0$. 
Let now $k >T/\triangle t$ be a given integer. We set $p=k/N$. Since $p N \leq k$ and thanks to the dissipation property \eqref{dissip3}, we have $E_{u^k} \leq E_{u^{pN}}$. Besides, we have $\rho_T p > \rho_T(k/N-1)$. Since $\psi$ and $L$ are nondecreasing and thanks to \eqref{estEupNg}, it follows that
\begin{equation*}
E_{u^k} \leq \beta L\left( \frac{1}{\psi^{-1}\left(\frac{\rho_T}{T}\left(k \triangle t - T\right)\right)} \right),
\end{equation*}
for every $k\geq \frac{1}{\rho_TH'(s_0^2)}\frac{T}{\triangle t}$. Moreover, under \eqref{condlimsup}, we have
$E_{u^{k}} \leq  \beta (H')^{-1}\left( \frac{\beta_3}{\frac{\rho_T}{T}\left(k \triangle t - T\right)} \right)$, for $k$ sufficiently large. 
Theorem \ref{thm_time} is proved.

\subsection{Full discretization}\label{sec_fullydiscrete}
Following \cite{EZ1} (see also references therein), results for full discretization schemes may be obtained from the previous time discretization and space discretization results, as follows: it suffices to notice that the results for time semi-discrete approximation schemes are actually valid for a class of abstract systems depending on a parameter, uniformly with respect to this parameter that is typically the space mesh parameter $\triangle x$. Then, using the results obtained for space semi-discretizations, we infer the desired uniform properties for fully discrete schemes.

More precisely, the class of abstract systems that we consider is defined as follows.
Let $h_0$, $\mathcal{B}$, $\mathcal{T}_1$, $\mathcal{T}_2$, $\mathcal{C}_1$, $\mathcal{C}_2$, $\mathcal{K}$, $\nu_1$, $\nu_2$ and $\nu_3$ be positive real numbers, let $s_0\in(0,1]$, and let $g:\R\rightarrow\R$ be a function.
We define 
$\mathscr{C}(h_0,\mathcal{B},\mathcal{T}_1,\mathcal{T}_2,\mathcal{C}_1,\mathcal{C}_2,\mathcal{K},g,s_0,\nu_1,\nu_2,\nu_3)$
as the set of 5-tuples $(\mathscr{X}_h,\mathscr{A}_h,D(\mathscr{A}_h),\mathscr{B}_h,\mathscr{F}_h)$, where, for every $h\in [0,h_0)$:
\begin{itemize}
\item $\mathscr{X}_h$ is a Hilbert space (of finite or infinite dimension), endowed with the norm $\Vert\ \Vert_h$;
\item $\mathscr{A}_h:D(\mathscr{A}_h)\subset \mathscr{X}_h\rightarrow \mathscr{X}_h$ is a densely defined skew-adjoint operator;
\item $\mathscr{B}_h:\mathscr{X}_h\rightarrow \mathscr{X}_h$ is a bounded selfadjoint nonnegative operator such that $\Vert \mathscr{B}_h\Vert\leq \mathcal{B}$;
\item there exist $\mathscr{T}_h\in [\mathcal{T}_1,\mathcal{T}_2]$ and $\mathscr{C}_h\in [\mathcal{C}_1,\mathcal{C}_2]$ such that
$$
\mathscr{C}_h \Vert \phi_h(0)\Vert_h^2 \leq \int_0^{\mathscr{T}_h} \left(\Vert \mathscr{B}_h^{1/2}\phi_h(t)\Vert_h^2+h^\sigma\Vert \mathscr{V}_h^{1/2}\phi_h(t)\Vert_h^{2}\right)\, dt,
$$
for every solution of
$$
\phi_h'(t)+\mathscr{A}_h\phi_h(t)=0;
$$
\item $\mathscr{F}_h:\mathscr{X}_h\rightarrow \mathscr{X}_h$ is a mapping that is Lipschitz continuous on bounded subsets of $\mathscr{X}_h$, with Lipschitz constant less than $\mathcal{K}$, and satisfying \eqref{assumptionfrhof} and \eqref{ineqg} with the function $g$;
\item $g$ is an increasing odd function of class $C^1$ such that $g(0)=g'(0)=0$, $\lim_{s\rightarrow 0} sg'(s)^2/g(s) = 0$, and such that $s\mapsto \sqrt{s}g(\sqrt{s})$ is strictly convex on $[0,s_0^2]$;
\item $\mathscr{V}_h:\mathscr{X}_h\rightarrow \mathscr{X}_h$ is a positive selfadjoint operator, and the family
$(h^{\sigma/2} \Vert\mathscr{V}_h^{1/2}\Vert)_{h\in (0,h_0)} $ is uniformly bounded;
\item any solution $u_h$ of
\begin{equation*}
u_h'(t) + \mathscr{A}_hu_h(t) + \mathscr{B}_h \mathscr{F}_h(u_h(t)) + h^\sigma\mathscr{V}_h u_h(t) = 0 ,
\end{equation*}
with $u_h(0)\in D(\mathscr{A}_h)$, is well defined on $[0,+\infty)$, and satisfies
\begin{equation*}
\Vert u_h(t)\Vert_h^2 \leq \nu_3 \max(\nu_1,\Vert u_h(0)\Vert_h) \, L\left( \frac{1}{\psi^{-1}( \nu_2 t )} \right) ,
\end{equation*}
for every time $t\geq 0$, where $L$ and $\psi$ are defined (in function of $g$) by \eqref{defL} and \eqref{defpsir}.
\end{itemize}

Within the framework and notations introduced in Sections \ref{sec1} and \ref{sec_discretization}, under the assumptions of Theorems \ref{thm_continuous} and \ref{thm_space}, there exist positive real numbers $\mathcal{B}$, $\mathcal{T}_1$, $\mathcal{T}_2$, $\mathcal{C}_1$, $\mathcal{C}_2$, $\mathcal{K}$, $\nu_1$, $\nu_2$ and $\nu_3$, such that the 5-tuple $(X,A,D(A),B,F)$ and the one-parameter family of 5-tuples $(X_{\triangle x},A_{\triangle x},X_{\triangle x},B_{\triangle x},F_{\triangle x})$, $\triangle_x\in(0,\triangle x_0)$, belong to the class
$\mathscr{C}(\triangle x_0,\mathcal{B},\mathcal{T}_1,\mathcal{T}_2,\mathcal{C}_1,\mathcal{C}_2,\mathcal{K},g,s_0,\nu_1,\nu_2,\nu_3)$.

Here, the parameter $h$ used in the definition of the abstract class above stands for the space semi-discretization parameter $\triangle x$, whenever $h>0$, and if $h=0$ then we recover exactly the continuous setting of Section \ref{sec1}.

\medskip

We claim that Theorem \ref{thm_time} can be applied within this class, uniformly with respect to the parameter $h$. This can be checked straightforwardly, noticing in particular that the constants appearing in the estimates of that result depend only on the data defining the class.

\medskip

In particular, from that remark, we infer full discretization schemes of \eqref{main_eq}, in which we have first discretized in space, and then in time, obtaining an energy decay as in Theorems \ref{thm_continuous}, \ref{thm_space} and \ref{thm_time}, uniformly with respect to the discretization parameters $\triangle x$ and $\triangle t$. Said in other words, we apply Theorem \ref{thm_time} to the one-parameter family of systems given by \eqref{waveqSpaceDiscrete}, parametrized by $\triangle x$, and for which, by Theorem \ref{thm_space}, we already know that the solutions decay in a uniform way. We have thus obtained the following result.

\begin{theorem}\label{thm_full}
Under the assumptions of Theorems \ref{thm_continuous}, \ref{thm_space} and \ref{thm_time}, the solutions of
\begin{equation*}
\left\{\begin{split}
& \frac{\widetilde{u}^{k+1}_{\triangle x} - u^k_{\triangle x}}{\triangle t} + A_{\triangle x}\left(\frac{u^k_{\triangle x}+ \widetilde{u}^{k+1}_{\triangle x}}{2}\right) + B_{\triangle x}F_{\triangle x} \left( \frac{u^k_{\triangle x}+ \widetilde{u}^{k+1}_{\triangle x}}{2}\right) + \mathcal{V}_{\triangle x} \left(\frac{u^k_{\triangle x}+ \widetilde{u}^{k+1}_{\triangle x}}{2}\right) = 0,\\
& \frac{  \widetilde{u}^{k+1}_{\triangle x} - u^{k+1}_{\triangle x} }{\triangle t} = \mathcal{V}_{\triangle t} u^{k+1}_{\triangle x} , 
\end{split}\right.
\end{equation*}
are well defined for every integer $k$, for every initial condition $u^0_{\triangle x}\in X_{\triangle x}$, and for every $\triangle x\in (0,\triangle x_0)$, and the energy of any solution satisfies
\begin{equation*}
\frac{1}{2}\Vert u^k_{\triangle x}\Vert_{\triangle x}^2 = E_{u^{k}_{\triangle x}} \leq  T \max(\gamma_1, E_{u^0_{\triangle x}}) \, L\left( \frac{1}{\psi^{-1}( \gamma_2 k \triangle t )} \right),
\end{equation*}
for every integer $k$, 
with $\gamma_2\simeq  C_T / T ( 1 + e^{2 T\Vert B\Vert} \max( 1, T\Vert B\Vert ) ) $
and $\gamma_1 \simeq \Vert B\Vert / \gamma_2$.
Moreover, under \eqref{condlimsup}, we have the simplified decay rate
\begin{equation*}
E_{u^{k}_{\triangle x}} \leq   T \max(\gamma_1, E_{u^0_{\triangle x}}) \, (H')^{-1}\left( \frac{\gamma_3}{k \triangle t } \right),
\end{equation*}
for every integer $k$, for some positive constant $\gamma_3\simeq 1$.
\end{theorem}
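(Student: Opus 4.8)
The plan is to exploit the fact that the proof of Theorem~\ref{thm_time} never uses the specific structure of the underlying spatial model: it relies only on the skew-adjointness of the generator, on the uniform boundedness and nonnegativity of the damping operator, on the uniform Lipschitz, sign and growth properties of the nonlinearity, on the uniform boundedness of the viscosity, and on a single uniform observability inequality for the associated conservative equation. These are exactly the data encoded in the abstract class $\mathscr{C}(\triangle x_0,\mathcal{B},\mathcal{T}_1,\mathcal{T}_2,\mathcal{C}_1,\mathcal{C}_2,\mathcal{K},g,s_0,\nu_1,\nu_2,\nu_3)$. I would therefore first establish that the continuous $5$-tuple $(X,A,D(A),B,F)$, corresponding to the value $h=0$, together with the whole family $(X_{\triangle x},A_{\triangle x},X_{\triangle x},B_{\triangle x},F_{\triangle x})$, $\triangle x\in(0,\triangle x_0)$, belongs to \emph{one and the same} class, and then apply Theorem~\ref{thm_time} once to this parametrized family, reading $h=\triangle x$.

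Verifying membership is the bookkeeping step. Skew-adjointness of each $A_{\triangle x}$ and the bound $\Vert B_{\triangle x}\Vert\lesssim\Vert B\Vert$ follow from the construction $A_{\triangle x}=P_{\triangle x}A\widetilde P_{\triangle x}$, $B_{\triangle x}=P_{\triangle x}B\widetilde P_{\triangle x}$ together with $P_{\triangle x}=\widetilde P_{\triangle x}^{*}$, yielding a uniform bound $\mathcal{B}$. The uniform Lipschitz constant $\mathcal{K}$ and the structural hypotheses on $F_{\triangle x}$ come from the assumptions \eqref{assumptionfrhotildef} and \eqref{ineqgrhotilde} imposed in the space semi-discrete setting, with the same $g$, hence the same $L$, $\psi$ and $s_0$. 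The uniform observability with viscosity is precisely \eqref{ineqObsDiscrete}, valid on a fixed horizon $T\in[\mathcal{T}_1,\mathcal{T}_2]$ with constant $C_T\in[\mathcal{C}_1,\mathcal{C}_2]$, and the uniform boundedness of $h^{\sigma/2}\Vert\mathscr{V}_h^{1/2}\Vert$ is \eqref{metz2059}. Finally, the uniform space semi-discrete decay supplied by Theorem~\ref{thm_space} is exactly the last required property of the class, with $(\nu_1,\nu_2,\nu_3)$ read off from $(\gamma_1,\gamma_2,\gamma_3)$ of that theorem.

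With the class membership in hand, I would invoke Theorem~\ref{thm_time} applied not to a single system but to each member of the family, observing that every constant produced in its proof (through Lemmas~\ref{t_dis_lem1}--\ref{intermediateTheo3} and the concluding comparison argument) depends only on the class data $\mathcal{B},\mathcal{T}_1,\mathcal{T}_2,\mathcal{C}_1,\mathcal{C}_2,\mathcal{K},g,s_0$ and never on $\triangle x$ itself. The energy estimate then holds with the same $\gamma_1,\gamma_2,\gamma_3$ for every $\triangle x$, which is exactly the claimed uniformity in both parameters. Reading the scheme of Theorem~\ref{thm_full} as the time semi-discretization \eqref{time_dis_nl} of the $\triangle x$-indexed system \eqref{waveqSpaceDiscrete} makes this identification literal, the space viscosity $\mathcal{V}_{\triangle x}$ being carried along inside the term $\mathscr{B}_h\mathscr{F}_h+h^{\sigma}\mathscr{V}_h$.

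The main obstacle I anticipate is securing the hypothesis of Theorem~\ref{thm_time} itself: one needs the uniform observability \eqref{unifobs} for the \emph{time} semi-discretization of the \emph{space} semi-discrete conservative equation $\phi_{\triangle x}'+A_{\triangle x}\phi_{\triangle x}=0$, uniformly in $\triangle x$ and $\triangle t$ simultaneously. This does not follow for free from \eqref{ineqObsDiscrete}; it must be generated from it. The route, following \cite{EZ1}, is to take the time viscosity $\mathcal{V}_{\triangle t}=(\triangle t)^{2}A_{\triangle x}^{*}A_{\triangle x}$ and to invoke the transfer result that a continuous-in-time observability inequality, valid uniformly over a family of conservative generators, passes to the corresponding time semi-discrete scheme with a constant depending only on the data of that inequality. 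The delicate point is that the transfer constants must be uniform with respect to the spatial parameter $\triangle x$; this is ensured precisely because \eqref{ineqObsDiscrete} holds with a fixed time $T$ and a fixed constant $C_T$ independent of $\triangle x$, so that the class $\mathscr{C}$ is populated with genuinely $\triangle x$-independent bounds. Once this doubly uniform observability is established, the conclusion is a direct application of the already-proved parametric Theorem~\ref{thm_time}.
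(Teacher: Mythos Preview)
Your proposal is correct and follows essentially the same approach as the paper: show that the continuous system and the family of space semi-discrete systems populate a single abstract class $\mathscr{C}$ with $\triangle x$-independent data, then apply Theorem~\ref{thm_time} uniformly across this class, noting that all constants in its proof depend only on the class parameters. Your final paragraph on the doubly uniform observability actually goes somewhat beyond the paper, which simply subsumes this under ``the assumptions of Theorem~\ref{thm_time}'' rather than deriving it; but your discussion of how to obtain it via the transfer mechanism of \cite{EZ1} is correct and is exactly what one would do in practice to verify that hypothesis.
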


\begin{example}
Let us consider the nonlinear damped wave equation studied in Section \ref{sec_nonlinearwave}. We make all assumptions mentioned in that section.

We first semi-discretize it in space by means of finite differences, as in Remark \ref{rk:commentsdiscobsineq}, with the viscosity operator $\mathcal{V}_{\triangle x} = - (\triangle x)^2\triangle_{\triangle x}$.
At some point $x_\sigma$ of the mesh ($\sigma$ being an index for the mesh), we denote by $u_{\triangle x,\sigma}(t)$ the point of $\R^n$ representing the approximation of $u(t,x_\sigma)$. Then the space semi-discrete scheme is given by
$$
u_{\triangle x,\sigma}''(t)-\triangle_{\triangle x} u_{\triangle x,\sigma}(t) + b_{\triangle x}(x_\sigma) \rho(x_\sigma,u_{\triangle x,\sigma}'(t)) - (\triangle x)^2\triangle_{\triangle x} u_{\triangle x}'(t) = 0 ,
$$
and the solutions of that system have the uniform energy decay rate $L(1/\psi^{-1}(t))$ (up to some constants).

Now, discretizing in time, we obtain the numerical scheme
\begin{equation*}
\left\{\begin{split}
& \frac{\widetilde{u}^{k+1}_{\triangle x,\sigma} - u^k_{\triangle x,\sigma}}{\triangle t} =  \frac{v^k_{\triangle x,\sigma}+\widetilde{v}^{k+1}_{\triangle x,\sigma} }{2}, \\
& \frac{\widetilde{v}^{k+1}_{\triangle x,\sigma} - v^k_{\triangle x,\sigma}}{\triangle t} - \triangle_{\triangle x,\sigma} \frac{u^k_{\triangle x,\sigma}+ \widetilde{u}^{k+1}_{\triangle x,\sigma}}{2} + b_{\triangle x,\sigma}(x_\sigma) \rho \left( x_\sigma, \frac{v^k_{\triangle x,\sigma}+\widetilde{v}^{k+1}_{\triangle x,\sigma} }{2} \right)  \\
& \qquad\qquad\qquad\qquad\qquad\qquad\qquad\qquad\qquad\qquad\qquad
 - (\triangle x)^2 \triangle_{\triangle x,\sigma} \frac{v^k_{\triangle x,\sigma}+\widetilde{v}^{k+1}_{\triangle x,\sigma} }{2} = 0,\\
& \frac{  \widetilde{u}^{k+1}_{\triangle x,\sigma} - u^{k+1}_{\triangle x,\sigma} }{\triangle t} = -(\triangle t)^2 \triangle_{\triangle x,\sigma}^2 u^{k+1}_{\triangle x,\sigma} , \qquad
\frac{  \widetilde{v}^{k+1}_{\triangle x,\sigma} - v^{k+1}_{\triangle x,\sigma} }{\triangle t} = -(\triangle t)^2 \triangle_{\triangle x,\sigma}^2 v^{k+1}_{\triangle x,\sigma} ,
\end{split}\right.
\end{equation*}
and according to Theorem \ref{thm_full}, the solutions of that system have the uniform energy decay rate $L(1/\psi^{-1}(t))$ (up to some constants).
\end{example}

\section{Conclusion and perspectives}
We have established sharp energy decay results for a large class of first-order nonlinear damped systems, and we have then studied semi-discretized versions of such systems, first separately in space and in time, and then as a consequence, for full discretizations. Our results state a uniform energy decay property for the solutions, the uniformity being with respect to the discretization parameters. This uniform property is obtained thanks to the introduction in the numerical schemes of appropriate viscosity terms.

Our results are very general and cover a wide range of possible applications, as overviewed in Section \ref{sec_examples}.

Now several questions are open, that we list and comment hereafterin.

\paragraph{(Un)Boundedness of $B$.}
The operator $B$ in \eqref{main_eq} has been assumed to be bounded, and this assumption has been used repeatedly in our proofs. This involves the case of local or nonlocal internal dampings, but this does not cover, for instance, the case of boundary dampings.
To give an example, let us consider the linear 1D wave equation with boundary damping
\begin{equation*}
\left\{\begin{array}{ll}
\partial_{tt}u-\partial_{xx} u = 0, & t\in(0,+\infty), \ x\in(0,1), \\
u(t,0)=0,\quad \partial_x u(t,1) = -\alpha \partial_t u(t,1), & t\in(0,+\infty),
\end{array}\right.
\end{equation*}
for some $\alpha>0$. It is well known that the energy $E(t)=\frac{1}{2}\int_0^1 \left( \vert\partial_t u(t,x)\vert^2 + \vert\partial_x u(t,x)\vert^2 \right) dx$ of any solution decays exponentially (see, e.g., \cite{BLR}).
It is proved in \cite{tebou_zuazua2} that the solutions of the regular finite-difference space semi-discrete model with viscosity
\begin{equation*}
\left\{\begin{split}
& u''_{\triangle x} - \triangle_{\triangle x} u_{\triangle x} - (\triangle x)^2 \triangle_{\triangle x} u'_{\triangle x} = 0, \\
& u_{\triangle x}(t,0)=0,\quad D_{\triangle x} u_{\triangle x}(t,1) = -\alpha \partial_t u(t,1), 
\end{split}\right.
\end{equation*}
where $D_{\triangle x}$ is the usual 1D forward finite-difference operator, given by
$$
D_{\triangle x} = \frac{1}{\triangle x}\begin{pmatrix}
-1 & 1 & \hdots & 0\\
0 & \ddots & \ddots  & \vdots\\
\vdots & \ddots & \ddots & 1\\
0 & \hdots & 0 & -1
\end{pmatrix} ,
$$
have a uniform exponential energy decay. Without the viscosity term, the decay is not uniform.
This example is not covered by our results, since the operator $B$ in that example is unbounded.

We mention also the earlier work \cite{BanksItoWang_1991}, in which the lack of uniformity had been numerically put in evidence for general space semi-discretizations of multi-D wave equations with boundary damping. By the way, the authors of that paper proposed a quite technical sufficient condition (based on energy considerations and not on viscosities) ensuring uniformity, and applied it to mixed finite elements in 1D and to polynomial Galerkin approximations in hypercubes. It is not clear if such considerations may be extended to our nonlinear setting.

\paragraph{More general nonlinear models.}
In relationship with the previous problem on $B$ unbounded, the question is open to treat equations like \eqref{main_eq}, but where the nonlinearity $F$ involves as well an unbounded operator, like for instance the equation
$$
u'(t)+Au(t)+BF(u(t),\nabla u(t)) = 0.
$$
There, the situation seems widely open. In our approach, what is particularly unclear is how to extend Lemma \ref{cont_lem3}.

An intermediate class of problems that has not been investigated at the discrete level is for instance the class of semilinear wave equations with strong damping
$$
\partial_{tt}u - \triangle u - a \triangle \partial_t u + b \partial_t u + g\star\triangle u + f(u) =0 ,
$$
with $f$ being not too much superlinear.
Here also, many variants are possible, with boundary damping (see \cite{alabau_AMO2005, alabauJDE}, with nonlocal terms (such as convolution), etc.
There exists a huge number of papers establishing decay rate results for such equations, see e.g. \cite{Dafermos, FGP} and the references therein
and see also \cite{AlabauCannarsa} for a nontrivial extension of the optimal-weight convexity method to the case of memory dissipation (non-local dissipation)
%
but nothing has been done for discretizations.

\paragraph{Geometric conditions and microlocal issues.}
Another class of equations of interest, not covered by our main result, is the stability of semilinear wave equations with locally distributed damping
$$
\partial_{tt}u - \triangle u + a(x) \partial_t u + f(u) =0 ,
$$
with Dirichlet boundary conditions. Here, $a$ is a nonnegative bounded function assumed to be positive on an open subset $\omega$ of $\Omega$, and the function $f$ is of class $C^1$, satisfying $f(0)=0$, $sf(s)\geq 0$ for every $s\in\R$ (defocusing case), $\vert f'(s)\vert\leq C\vert s\vert^{p-1}$ with $p\leq n/(n-2)$ (energy subcritical). We set $F(s)=\int_0^s f$.
It is proved in \cite{Zuazua_CPDE1990} (see also some extensions in \cite{DehmanLebeauZuazua,JolyLaurent_APDE2013} and a variant in \cite{AmmariDuyckaertsShirikyan}) that, under geometric conditions on $\omega$, the energy (which involves here an additional nonlinear term)
$$
\int_\Omega \left(  \frac{1}{2}(\partial_t u)^2 + \frac{1}{2}\Vert\nabla u\Vert^2 + F(u) \right) dx
$$
decays exponentially in time along any solution.
It is natural to expect that this exponential decay is kept in a uniform way for discrete models, if one adds appropriate viscosity terms as we have done in this paper. Note that this is not covered by our results since the above energy involves an additional nonlinear term.

Besides, in the general case where $\omega$ satisfies the Geometric Control Condition (GCC) of \cite{BLR}, the arguments of \cite{AmmariDuyckaertsShirikyan,DehmanLebeauZuazua,JolyLaurent_APDE2013} rely on microlocal issues and it is not clear whether or not such arguments may withstand discretizations. For instance, it is not clear what GCC becomes in a discrete setting. It is also not clear what a microlocal argument is at the discrete level, and such considerations may lead to several possible interpretations.
In brief, we raise here the completely open (and deliberately informal and imprecise) question:
\begin{quote}
\textit{Do microlocalization and discretization commute?}
\end{quote}

\paragraph{Uniform polynomial energy decay for linear equations without observability property}
Let us focus on linear equations, that is, let us assume that $F=\mathrm{id}_X$ in \eqref{main_eq}. 
It is well known that, in the continuous setting, the observability inequality \eqref{observability_assumption} holds true for all solutions of the conservative linear equation \eqref{cont_conservative}, if and only if the solutions of the linear damped equation \eqref{cont_lineardamped} (which coincides with \eqref{main_eq} in that case) have an exponential decay (see \cite{Haraux}).

If the observability inequality \eqref{observability_assumption} is not satisfied, then the decay of the energy cannot be exponential, however, it may be polynomial in some cases.
It may be so for instance for some weakly damped wave equations in the absence of geometric control condition (see \cite{BLR}) but also for indirect stabilization for coupled systems, that is when certain equations are not directly stabilized, even though the usual geometric conditions are satisfied (see \cite{Alabau1999, Alabau2002, ACK2002}).
In that case, it would be of interest to establish a uniform polynomial decay rate for space and/or time semi-discrete and full discrete approximations of \eqref{main_eq}.
In \cite{ANVE}, such results are stated for second-order linear equations (certain examples being taken from \cite{Alabau2002, ACK2002}) , with appropriate viscosity terms, and under adequate spectral gap conditions.

Extending this kind of result to a more general framework (weaker assumptions, full discretizations), and to our nonlinear setting, is open. 
Our strategy of proof is indeed strongly based on the use of observability inequalities.

\bigskip

\noindent{\bf Acknowledgment.}
The third author was partially supported by the Grant FA9550-14-1-0214 of the EOARD-AFOSR.

\end{document}